\newtheorem{theorem}{Theorem}[section]
\newtheorem{lemma}[theorem]{Lemma}
\newtheorem{proposition}[theorem]{Proposition}
\newtheorem{corollary}[theorem]{Corollary}
\theoremstyle{definition}
\newtheorem{example}[theorem]{Example}
\newtheorem{remark}[theorem]{Remark}
\numberwithin{equation}{section}
\let\sse=\subseteq
\let\vphi=\varphi
\let\limply=\Longrightarrow
\def\0{\{0\}}
\def\conv{{\;\longrightarrow\;}}
\def\wconv{{\buildrel_{\scriptstyle w}\over\conv}}
\font\fiverm=cmr5
\def\sslash{\hbox{{\fiverm /}}}
\def\notwconv{{{\wconv\kern-13pt\sslash}\kern9pt}}
\def\A{{\mathcal A}}
\def\B{{\mathcal B}}
\def\H{{\mathcal H}}
\def\K{{\mathcal K}}
\def\Le{{\mathcal L}}
\def\M{{\mathcal M}}
\def\N{{\mathcal N}}
\def\P{{\mathcal P}}
\def\Q{{\mathcal Q}}
\def\R{{\mathcal R}}
\def\X{{\mathcal X}}
\def\A{{\acal}}
\def\H{{\hh}}
\def\K{{\kk}}
\def\Le{{\lcal}}
\def\M{{\mcal}}
\def\N{{\ncal}}
\newcommand*{\rcal}{\EuScript{R}}
\newcommand*{\xcal}{\EuScript{X}}
\def\R{{\rcal}}
\def\X{{\xcal}}
\def\BH{{\B(\hh)}}
\def\BK{{\B(\kk)}}
\def\BL{{\B(\lcal)}}
\def\CC{{\mathbb C\kern.5pt}}
\def\DD{{\mathbb D\kern.5pt}}
\def\QQ{{\mathbb Q\kern.5pt}}
\def\NN{{\mathbb N\kern.5pt}}
\def\TT{{\mathbb T\kern.5pt}}
\newcommand*{\acal}{\EuScript{A}}
\newcommand*{\cbb}{\mathbb{C}}
\newcommand*{\D}{\mathrm{d\hspace{.1ex}}}
\newcommand*{\E}{\mathrm{e}}
\newcommand*{\lcal}{\EuScript{L}}
\newcommand*{\Ge}{\geqslant}
\newcommand*{\hh}{\EuScript{H}}
\newcommand*{\is}[2]{\langle#1,#2\rangle}
\newcommand*{\kk}{\EuScript{K}}
\newcommand*{\Leq}{\leqslant}
\newcommand*{\Geq}{\geqslant}
\newcommand*{\mcal}{\EuScript{M}}
\newcommand*{\ncal}{\EuScript{N}}
\newcommand*{\ob}[1]{{\mathcal R}(#1)}
   \thanks{The research of the first and fourth
authors was supported by the National Science Center
(NCN) Grant OPUS No.\ DEC-2021/43/B/ST1/01651. The
research of the second author was supported by Basic
Science Research Program through the National Research
Foundation of Korea (NRF) funded by the Ministry of
Education (NRF-2021R111A1A01043569).}
\begin{document}

\title[Convergence of Power Sequences] {Convergence of
Power Sequences of operators via their stability}
\author
       [Z.J. Jablonski]
       {Zenon Jan Jab\l o\'nski}
\address{Instytut Matematyki, Uniwersytet Jagiello\'nski, Krak\'ow, Poland}
\email{Zenon.Jablonski@im.uj.edu.pl}
\author
       [I.B. Jung]
       {Il Bong Jung}
\address{Department of Mathematics, Kyungpook National University, Daegu,
        Korea}
\email{ibjung@knu.ac.kr}
\author
       [C.S. Kubrusly]
       {Carlos Kubrusly}
\address{Department of Electrical Engineering, Catholic University,
         Rio de Janeiro, Brazil}
\email{carlos@ele.puc-rio.br}
\author
       [J. Stochel]
       {Jan Stochel}
\address{Instytut Matematyki, Uniwersytet Jagiello\'nski, Krak\'ow, Poland}
\email{Jan.Stochel@im.uj.edu.pl}

\renewcommand{\keywordsname}{Keywords}

\keywords{Weak (strong, norm) convergence, power
sequences of operators, weak (strong, uniform)
stability, hyponormal operators, subnormal operators,
unitary operators}

\subjclass{Primary 37B25 Secondary 47B20, 47B15,
47B37}
\date{}

   \begin{abstract}
This paper is concerned with the convergence of power
sequences and stability of Hilbert space operators,
where ``convergence'' and ``stability'' refer to weak,
strong and norm topologies. It is proved that an
operator has a convergent power sequence if and only
if it is a (not necessarily orthogonal) direct sum of
an identity operator and a stable operator. This
reduces the issue of convergence of the power sequence
of an operator $T$ to the study of stability of $T$.
The question of when the limit of the power sequence
is an orthogonal projection is investigated. Among
operators sharing this property are hyponormal and
contractive ones. In particular, a hyponormal or a
contractive operator with no identity part is stable
if and only if its power sequence is convergent. In
turn, a unitary operator has a weakly convergent power
sequence if and only if its singular-continuous part
is weakly stable and its singular-discrete part is the
identity. Characterizations of the convergence of
power sequences and stability of subnormal operators
are given in terms of semispectral measures.
   \end{abstract}
   \maketitle
\section{Introduction}
   The notion of operator stability is linked to a
discrete, time-invariant, free, linear dynamical
system modelled by the autonomous homogeneous
difference equation:
   \begin{align} \label{ldsix}
x_{n+1} = T x_n, \quad n=0,1,2, \ldots,
   \end{align}
with the initial condition $x_0 = x \in \X$, where
$\X$ is a normed space, $x$ is a vector in $\X$ and
$T$ is a bounded linear operator on $\X$, whose
solution is given by the formula
   \begin{align*}
x_n = T^n x, \quad n=0,1,2, \ldots.
   \end{align*}
The above discrete system is asymptotically stable if
$\{T^n x\}_{n=1}^{\infty}$ converges to zero for every
initial condition $x$. Of course, the meaning of the
term ``convergence'' depends on the topology with
which $\X$ is equipped. For a given operator $T$, the
operator-valued power sequence
$\{T^n\}_{n=1}^{\infty}$ of $T$ may converge to zero,
in the sense that the $\X$-valued sequence $\{T^n
x\}_{n=1}^{\infty}$ converges to zero for every $x$,
either in the weak or norm topology of $\X$, or in the
uniform norm topology of the corresponding operator
space, giving rise to the notions of weak, strong, and
uniform stability for the operator $T$. (These notions
will be clarified in Section~\ref{Sec.2}.) Thus, weak
stability refers to the weakest way in which the
linear discrete system \eqref{ldsix} approaches
asymptotically zero for all initial conditions. In
infinite dimensions (which is our case), this concept
has a continuous counterpart linked to mild solutions
of certain partial differential equations. We refer
the reader to \cite{Zab74} for a beautiful
presentation of the limit properties of discrete-time
distributed parameter systems and to \cite{Fuhr81} for
the general theory of discrete time invariant linear
systems.

The purpose of this paper is to study the convergence
of power sequences and stability of operators acting
on complex Hilbert spaces. Convergence and stability
refer to weak, strong and norm topologies. One of the
main results of the paper, Theorem~\ref{T.1}, states,
inter alia, the following.
   \begin{enumerate}
   \item[$0^{\circ}$]
{\em An operator has a weakly $($strongly, norm$)$
convergent power sequence if and only if it is a
$($not necessarily orthogonal\/$)$ direct sum of two
operators, the first of which is the identity on a
precisely described space, and the second is weakly
$($strongly, uniformly$)$ stable.}
   \end{enumerate}
In other words, the issue of weak $($strong, norm$)$
convergence of power sequences reduces to the study of
weak $($strong, uniform$)$ stability. As an immediate
consequence, we obtain that an operator is weakly
$($strongly, uniformly$)$ stable if and only if its
power sequence converges weakly $($strongly, in
norm$)$ and $1$ is not its eigenvalue (see
\cite[Theorem~1]{Kub89}). In fact, operators whose
power sequences are weakly $($strongly, norm$)$
convergent are completely characterized by the
similarity to operators whose power sequences converge
weakly $($strongly, in norm$)$ to orthogonal
projections (see Corollary~\ref{Synyr}). For this
reason, a significant part of the paper is devoted to
the situation when the weak $($strong, norm$)$ limit
of the power sequence of an operator is an orthogonal
projection (see Section~\ref{Sec.3.5}). Below are some
important excerpts from this article.
   \begin{enumerate}
   \item[$1^{\circ}$] {\it A hyponormal or a contractive operator with
no identity part is weakly $($strongly, uniformly$)$
stable if and only if its power sequence converges
weakly $($strongly, in norm$)$} (see
Corollary~\ref{C.1Z}).
   \item[$2^{\circ}$]
{\it If the power sequence of an operator $T$ with
$\liminf_{n\to \infty}\|T^n\|\Leq 1$ is weakly
convergent, then its limit is an orthogonal projection
which commutes with $T$} (see Theorem~\ref{L.1}).
   \item[$3^{\circ}$] {\it A unitary operator has a weakly convergent power
sequence if and only if its singular-continuous part
is weakly stable and its singular-discrete part is the
identity operator\/{\em ;} each part may be absent}
(see Theorem~\ref{T2}).
   \item[$4^{\circ}$] {\it Characterizations of the weak
$($strong, uniform$)$ convergence of power sequences
and the corresponding stability of subnormal operators
are stated in terms of their semispectral measures}
(see Section~\ref{Sec.5}).
   \end{enumerate}

The present paper was partially inspired by
\cite{CJJS2} and \cite{JJS23}. All concepts used above
will be defined in the subsequent sections$.$ The
paper is organized as follows$.$ Basic notation and
terminology are summarized in Section~\ref{Sec.2}$.$
The weak (strong, norm) convergence of power sequences
and the corresponding stability of general operators
are studied in Section~\ref{Sec.3}. In
Section~\ref{Sec.3.5}, we investigate the question of
when the weak limit of the power sequence of an
operator is an orthogonal projection. In particular,
regarding $2^{\circ}$, we give an example of a weakly
stable operator $T$ such that $\liminf_{n\to
\infty}\|T^n\| = 1$ and $\limsup_{n\to \infty}
\|T^n\|$ takes a predetermined numerical value from
the open interval $(1,\infty)$ (see
Example~\ref{serzcw}). The special case of unitary
operators is treated in Section~\ref{Sec.4}$.$ The
weak (strong, uniform) convergence of power sequences
and the related stability of subnormal operators is
investigated in Section~\ref{Sec.5}.
\section{\label{Sec.2}Notation and Terminology}
We write $\TT=\{z\in\CC\colon |z|=1\}$ and
$\DD=\{z\in\CC\colon |z|<1\}.$ By an operator $T$ on a
complex Hilbert space $\H$ with inner product
$\<\,\cdot,\mbox{-}\>$ we mean a bounded linear
transformation of $\H$ into itself$.$ Let $\BH$ stand
for the $C^*$-algebra of all operators on $\H.$ We use
the same symbol $\|\cdot\|$ for the norm on $\H$ and
for the induced operator norm on $\BH.$ Let $T \in
\B(\H)$. The kernel and the range of $T$ are denoted
by $\N(T)$ and $\R(T)$, respectively$.$ We write
$\sigma(T)$, $r(T)$ and $w(T)$ for the spectrum, the
spectral radius and the numerical radius of $T$,
respectively. The operator $T$ is a {\em normaloid} if
$r(T)=\|T\|$, a {\em contraction} (or a {\em
contractive operator\/}) if $\|T\|\Leq 1$, a {\em
strict contraction} if $\|T\|<1$, and {\em power
bounded} if $\sup_{n\Ge1}\|T^n\|<\infty.$ The power
sequence $\{T^n\}_{n=1}^{\infty}$ of $T$ converges
{\em weakly} if the $\H$-valued sequence
$\{T^nx\}_{n=1}^{\infty}$ converges weakly for every
$x\in\H$, that is, the complex valued sequence
$\{\<T^nx,y\>\}_{n=1}^{\infty}$ converges for all
$x,y\in\H.$ Using the uniform boundedness principle
and the Riesz representation theorem one can verify
that the power sequence $\{T^n\}_{n=1}^{\infty}$
converges weakly if and only if there exists an
operator $A\in\BH$ such that $\{T^nx\}_{n=1}^{\infty}$
converges weakly to $Ax$ for every $x\in\H$ (notation:
$T^nx \wconv Ax)$, or equivalently that
$\<T^nx,y\>\to\<Ax,y\>$ as $n\to \infty$ for all
$x,y\in\H.$ If this is the case, we say that
$\{T^n\}_{n=1}^{\infty}$ converges {\em weakly} to $A$
(notation$:$ $T^n\wconv A).$ The power sequence
$\{T^n\}_{n=1}^{\infty}$ converges {\em strongly} to
an operator $A\in \BH$ if $\{T^n x\}_{n=1}^{\infty}$
converges to $Ax$ in norm for every $x\in \H$. The
operator $T$ is said to be {\it weakly $($strongly,
uniformly$)$ stable} if the power sequence
$\{T^n\}_{n=1}^{\infty}$ converges weakly $($strongly,
in norm$)$ to the zero operator. The following fact is
well known (cf.\ \cite[p.\ 10]{MDOT}).
   \begin{align} \label{ustypiy}
   \begin{minipage}{74ex}
{\em An operator $T\in \B(\H)$ is uniformly stable if
and only if $r(T)<1$. Moreover, if $T$ is a normaloid,
then $T$ is uniformly stable if and only if
$\|T\|<1$.}
   \end{minipage}
   \end{align}
By the uniform boundedness principle, the weak
convergence of $\{T^n\}_{n=1}^{\infty}$ implies the
power boundedness of $T$. Hence, using the spectral
radius formula, we obtain.
   \begin{align} \label{gwpi}
   \begin{minipage}{65ex}
{\em If $T\in \B(\H)$ is normaloid and
$\{T^n\}_{n=1}^{\infty}$ is weakly convergent, then
$T$ is a contraction.}
   \end{minipage}
   \end{align}
We refer the reader to \cite{Eis10} and \cite{MDOT}
for more information on this subject.

Recall that an operator $T\in\BH$ is {\em normal} if
$TT^*=T^*T$, {\em hyponormal} if $TT^*\Leq T^*T$, an
{\em isometry} if $T^*T=I$, and {\em unitary} if
$TT^*=T^*T=I$, where $I$ stands for the identity
operator. The operator $T$ is {\em subnormal} if $T$
is the restriction of a normal operator to its
invariant subspace (i.e., a closed vector subspace).
If the only subspace of $\K$ reducing $N$ and
containing $\H$ is $\K$ itself, then $N$ is called a
{\em minimal normal extension} of $T$. These classes
of operators are related to each other as follows (see
\cite[Propositions~II.4.2 and II.4.6]{Co91}).
   \begin{align} \label{shnyr}
   \begin{minipage}{70ex}
{\em Subnormal operators are hyponormal and hyponormal
operators are normaloid.}
   \end{minipage}
   \end{align}
More details on the above-mentioned classes of
operators can be found in \cite{Co91}.

If $\X$ is a subset of $\H$, then we write
$\X^\perp=\{x\in \H \colon x \perp \X\}.$ Let $T\in
\B(\H)$. A subspace $\M$ of a Hilbert space $\H$ is
reducing for $T$ (or $\M$ reduces $T$) if $\M$ is
invariant for both $T$ and $T^*$ or, equivalently, if
$\M$ and $\M^\perp$ are both invariant for $T$. A {\em
part} of $T$ is understood as the restriction of $T$
to any of its reducing subspaces (Sometimes the term
``part'' is defined as the restriction of an operator
to its invariant subspace, but this is not our case.).
Weak convergence behaves well with respect to
orthogonal parts and sums. Indeed, if $\K\oplus\lcal$
is the orthogonal sum of Hilbert spaces $\K$ and
$\Le$, $S\oplus T$ and $A\oplus B$ are the orthogonal
sums of operators $S,A\in\BK$ and $T,B\in\BL$, then
$(S\oplus T)^n = S^n\oplus T^n$ for every integer
$n\Ge 0$~and
   \begin{align} \label{(1)}
\textit{$(S\oplus T)^n\wconv A\oplus B$ \; if and only
if \; $S^n\wconv A$ and $T^n\wconv B$.}
   \end{align}
In particular, $S\oplus T$ is weakly stable if and
only if $S$ and $T$ are weakly stable. The same
properties are shared by strong and operator norm
topologies. Similar assertions can be stated for
direct sums of operators, as discussed in
Section~\ref{Sec.3}.
\section{\label{Sec.3}Convergence of Power Sequences}
In this section, we extend a result from \cite{CJJS2}
on the weak convergence of power sequences of unitary
operators, showing that it is in fact valid for
arbitrary operators, however, at the cost of losing
the orthogonality of the limit projection (see
Theorem~\ref{T.1}). For the convenience of the reader,
we state it explicitly.
\begin{lemma}[\mbox{\cite[Lemma~3.6]{CJJS2}}] \label{P.1}
Let $U$ be a unitary operator on a Hilbert space.
   \begin{enumerate}
   \item[(i)]
If ${U^n\wconv P}$, then $P$ is an orthogonal
projection such that $\R(P)=\N(I-U)$.
   \item[(ii)]
$U$ is weakly stable if and only if ${U^n\wconv P}$
and ${\N(I-U)=\0}$.
   \end{enumerate}
   \end{lemma}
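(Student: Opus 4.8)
The plan is to prove (i) by showing directly that the weak limit $P$ must coincide with the orthogonal projection onto $\N(I-U)$; statement (ii) will then follow with almost no extra work. The starting point is the orthogonal decomposition $\H=\N(I-U)\oplus\overline{\R(I-U)}$. This decomposition is available precisely because $U$ is unitary: a short computation using $UU^*=U^*U=I$ shows that $I-U$ is normal, hence $\N(I-U)=\N(I-U^*)$, and therefore $\overline{\R(I-U)}=\N((I-U)^*)^{\perp}=\N(I-U)^{\perp}$. Both summands are invariant for $U$ (and for $U^*$), so I may analyse the power sequence separately on each.

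On the fixed space $\N(I-U)$ the operator acts as the identity, so $U^nx=x$ for every $n$ and the weak limit there is forced to be $Px=x$. On $\R(I-U)$ I would use a telescoping/shift argument: for $x=(I-U)w$ one has $U^nx=U^nw-U^{n+1}w$, and since both $U^nw$ and the shifted sequence $U^{n+1}w$ converge weakly to the same vector $Pw$, it follows that $U^nx\wconv 0$, i.e.\ $Px=0$. To pass from $\R(I-U)$ to its closure $\overline{\R(I-U)}$ I would invoke the uniform bound $\|U^n\|=1$: a weak limit arising from a uniformly bounded sequence and vanishing on a dense subset vanishes on the whole closed subspace. Combining the two cases, $P$ is the identity on $\N(I-U)$ and zero on $\N(I-U)^{\perp}$, which is exactly the assertion that $P$ is the orthogonal projection with $\R(P)=\N(I-U)$; this settles (i) in one stroke, including self-adjointness and idempotency of $P$.

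The step I expect to be the \emph{main obstacle} is not any single computation but the two structural facts that make the decomposition argument legitimate: the normality of $I-U$, which upgrades the general identity $\N(I-U)^{\perp}=\overline{\R(I-U^*)}$ to $\N(I-U)^{\perp}=\overline{\R(I-U)}$, and the density-plus-uniform-boundedness passage on the range part. An alternative, slightly more algebraic route to the same relations is the shift trick: weak continuity of left and right multiplication by the fixed bounded operator $U$, applied to $U^{n+1}=UU^n=U^nU$, yields $UP=PU=P$ at once; this immediately gives $\R(P)\subseteq\N(I-U)$, while $U^nx=x$ on $\N(I-U)$ gives the reverse inclusion. Either route reaches the same conclusion.

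Finally, for (ii) I would simply specialise (i). If $U$ is weakly stable then $U^n\wconv 0$, so the power sequence converges weakly and, by (i), $\0=\R(0)=\N(I-U)$. Conversely, if $U^n\wconv P$ and $\N(I-U)=\0$, then by (i) $\R(P)=\N(I-U)=\0$, forcing $P=0$, that is, $U$ is weakly stable.
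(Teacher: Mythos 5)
Your proof is correct, and it takes a genuinely different route from the paper's. In fact the paper does not prove Lemma~\ref{P.1} at all --- it quotes it from \cite{CJJS2} --- and within this paper the lemma is recovered by specializing general machinery: Theorem~\ref{T.1}(i) shows, via separate weak continuity of multiplication (the shift trick giving $P=PT^{k}$, hence $P=P^{2}$ and $PT=TP$), that the weak limit of any power sequence is a (not necessarily orthogonal) projection with range $\N(I-T)$; Theorem~\ref{pytxly} (equivalently Corollary~\ref{C.1Z}, or Proposition~\ref{C3.7}) then upgrades this projection to an orthogonal one exactly when $\N(I-T)$ reduces $T$, which is automatic for unitary (indeed hyponormal) operators. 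You instead exploit unitarity from the outset: normality of $I-U$ yields the orthogonal splitting $\H=\N(I-U)\oplus\overline{\R(I-U)}$; on the fixed space the limit must be the identity, and on $\R(I-U)$ the telescoping identity $U^{n}(I-U)w=U^{n}w-U^{n+1}w\wconv 0$, together with density and $\|U^{n}\|=1$, forces the limit to vanish. This is a mean-ergodic-theorem style argument: more elementary and self-contained, and it produces the orthogonality of $P$ directly, without first proving that $P$ is an idempotent commuting with $U$. The trade-off is scope: the paper's route costs more but applies to arbitrary operators, where orthogonality genuinely fails, and isolates exactly when it holds. One caution: your parenthetical claim that the algebraic shift trick alone ``reaches the same conclusion'' is an overstatement --- it gives $UP=PU=P$, $P^{2}=P$ and $\R(P)=\N(I-U)$, but not self-adjointness of $P$; for that you would still need your decomposition, or else apply the same trick to $U^{*}$ (noting $U^{*n}\wconv P^{*}$) to get $\R(P^{*})=\N(I-U^{*})=\N(I-U)=\R(P)$, whence $P^{*}P=P$ and so $P^{*}=P$. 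Since your main argument never relies on that aside, the proof stands as written.
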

It is worth pointing out that part (ii) of
Lemma~\ref{P.1} holds for arbitrary operators (see
\cite[Theorem~1]{Kub89}).

Before we formulate the main result of this section,
we will summarize the basic facts about (not
necessarily orthogonal) projections in Hilbert spaces,
where by a {\em projection} we mean an operator $P\in
\BH$ which is an idempotent, that is, $P^2=P$.
   \begin{enumerate}
   \item[$\bullet$]  If $P\in \BH$ is a projection, then $\R(P)$
and $\R(I-P)$ are closed and $\H=\R(P) \dotplus
\R(I-P)$, where $\dotplus$ means direct (algebraic)
sum. Moreover, $\H=\R(P) \oplus \R(I-P)$ if and only
if $P$ is selfadjoint.
   \item[$\bullet$] Conversely, by the closed graph theorem, if
$\H=\H_1\dotplus \H_2$, where $\H_1$ and $\H_2$ are
subspaces of $\H$, then there exists a unique
projection $P\in \BH$ such that $\R(P)=\H_1$ and
$\R(I-P)=\H_2$.
   \item[$\bullet$] Let $\H_1$ and $\H_2$ be
subspaces of $\H$ such that $\H=\H_1\dotplus \H_2$,
$T_1\in \B(\H_1)$ and $T_2\in \B(\H_2)$. We write
$T=T_1\dotplus T_2$ for the operator $T\in \BH$ given
by $T(x_1 + x_2)=T_1 x_1 + T_2x_2$ for $x_1\in \H_1$
and $x_2\in \H_2.$ Then $TP=PT$, where $P\in \BH$ is a
unique projection with $\R(P)=\H_1$ and
$\R(I-P)=\H_2$.
   \item[$\bullet$] Conversely, if $TP=PT$ for some projection $P\in \BH$,
then $T$ decomposes as $T=T_1\dotplus T_2$, where
$T_1\in \B(\R(P))$ and $T_2\in \B(\R(I-P))$.
   \end{enumerate}
   \begin{theorem} \label{T.1}
Let $T\in \BH$. Then the following statements hold{\em
:}
   \begin{enumerate}
   \item[(i)]  if $\{T^n\}_{n=1}^{\infty}$ converges weakly
$($strongly, in norm$)$ to $P\in \BH$, then $P$ is a
projection with $\R(P)=\N(I-T)$ and $T$ decomposes as
$T=I \dotplus L$ with respect to the direct
decomposition $\H = \R(P) \dotplus \R(I-P)$, where $L$
is weakly $($strongly, uniformly$)$ stable,
   \item[(ii)] if $P\in \BH$ is a projection and
$T$ decomposes as $T=I \dotplus L$ with respect to the
direct decomposition $\H = \R(P) \dotplus \R(I-P)$,
where $L$ is weakly $($strongly, uniformly$)$ stable,
then $\{T^n\}_{n=1}^{\infty}$ converges weakly
$($strongly, in norm$)$ to $P$,
   \item[(iii)] $T$ is weakly $($strongly, uniformly$)$
stable if and only if $\N(I-T)=\0$ and
$\{T^n\}_{n=1}^{\infty}$ is weakly $($strongly,
norm$)$ convergent.
   \end{enumerate}
   \end{theorem}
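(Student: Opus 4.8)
The plan is to prove the three parts in the order (i), then (ii), then derive (iii) as a consequence, treating the three topologies in parallel wherever the arguments coincide and only separating them where strictness of the convergence mode matters.

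For (i), I would begin by assuming only the weakest hypothesis, $T^n \wconv P$. The first step extracts the algebraic structure of $P$. Evaluating the pairing on the shifted sequence $T^{n+1}$ in two ways, as $T\cdot T^n$ and as $T^n\cdot T$, and using that $\{T^{n+1}x\}$ has the same weak limit as $\{T^n x\}$, I obtain $TP = P$ and $PT = P$; in particular $T$ commutes with $P$. Iterating $TP = P$ gives $T^n P = P$ for all $n$, and since $T^n(Px)\wconv P^2 x$ while $T^n P = P$ is constant, comparison of weak limits yields $P^2 = P$, so $P$ is a bounded projection. For $\R(P) = \N(I-T)$: if $Px = x$ then $TPx = Px$ gives $Tx = x$, whereas if $Tx = x$ then $T^n x = x$ for all $n$ forces $Px = x$ by uniqueness of the weak limit. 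Since $TP = PT$, the last bullet point preceding the theorem lets me write $T = T_1 \dotplus T_2$ along $\H = \R(P)\dotplus\R(I-P)$, and because $T$ acts as the identity on $\R(P) = \N(I-T)$ we have $T_1 = I$, i.e.\ $T = I \dotplus L$ with $L := T_2$.

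The remaining and most delicate step of (i) is to transfer stability to $L$. Writing $x = Px + (I-P)x$ and using $T^n = I \dotplus L^n$ together with $P = I \dotplus 0$, I get $(T^n - P)x = L^n(I-P)x$, a vector lying in the closed subspace $\R(I-P)$. This is exactly where the obliqueness of the sum is the genuine obstacle: the inner-product pairing on $\R(I-P)$ is the restriction of the one on $\H$, but the operator norm of $0 \dotplus L^n$ on $\H$ need not equal the norm of $L^n$ on $\R(I-P)$. For the weak case I pass $\langle L^n(I-P)x, y\rangle$ to $\langle L^n(I-P)x, Qy\rangle$, where $Q$ is the orthogonal projection of $\H$ onto $\R(I-P)$, and conclude $L^n(I-P)x \wconv 0$ inside $\R(I-P)$; the strong case is immediate from $\|(T^n-P)x\|\to 0$. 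For uniform stability I restrict $0 \dotplus L^n$ to unit vectors of $\R(I-P)$, obtaining $\|L^n\| \Leq \|T^n - P\| \to 0$. Hence $L$ is weakly (strongly, uniformly) stable, completing (i).

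For (ii) I run the same computation in reverse, starting from the identity $(T^n - P)x = L^n(I-P)x$ with $I-P$ bounded. Strong and norm convergence of $L^n$ transfer directly, using $\|0 \dotplus L^n\| \Leq \|L^n\|\,\|I-P\|$ for the norm case; for the weak case, writing $y = Qy + (I-Q)y$ and noting $L^n(I-P)x \in \R(I-P)$ is orthogonal to $(I-Q)y$ reduces $\langle (T^n - P)x, y\rangle$ to a pairing within $\R(I-P)$ that tends to zero. Finally, (iii) drops out of (i): if $T$ is stable then $T^n$ converges to $P = 0$, so $\N(I-T) = \R(P) = \0$; conversely, if $\N(I-T) = \0$ and $T^n$ converges to some projection $P$, then $\R(P) = \N(I-T) = \0$ forces $P = 0$, i.e.\ $T$ is stable. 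The recurring difficulty throughout is the bookkeeping of the non-orthogonal direct sum, namely verifying that weak and norm convergence genuinely pass between $\H$ and the oblique summand $\R(I-P)$; this rests on the boundedness of the idempotent $P$ and on the comparison, in both directions, of the subspace norm with the ambient one.
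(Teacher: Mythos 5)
Your proof is correct and follows essentially the same route as the paper's: the same weak-continuity-of-multiplication argument yields $TP=PT=P$ and $P^2=P$, the same identification $\R(P)=\N(I-T)$, and the same oblique decomposition $T=I\dotplus L$ via the commutation $TP=PT$. The only minor difference is that you establish the stability of $L$ directly from the identity $(T^n-P)x=L^n(I-P)x$ (with explicit norm and pairing comparisons between $\H$ and $\R(I-P)$), whereas the paper applies the already-proved portion of (i) to $L$ and uses $\N(I-L)=\0$; both arguments are sound.
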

   \begin{proof}
First, we deal with the case of weak topology.

(i) Suppose that $\{T^n\}_{n=1}^{\infty}$ converges
weakly to $P\in \BH$. It follows from
\cite[Theorem~1]{Kub89} that $P$ is a projection which
commutes with $T$. For the reader's convenience, we
sketch a slightly different proof of this fact. Since
$T^{n}\wconv P$ as $n\to \infty$, we see that for
$k\Geq 1$, $T^{n+k}\wconv P$ as $n\to \infty$. Using
the fact that multiplication in $\BH$ is separately
weakly continuous, we deduce that for $k\Ge 1$,
$T^{n+k}\wconv PT^k$ as $n\to \infty$. Thus, $P=PT^k$
for $k\Ge 1$. Passing to the limit as $k\to \infty$,
we conclude that $P=P^2$. Arguing as above, we get
   \begin{align} \label{komits}
PT = \textrm{(weak)} \lim_{n\to \infty} T^n T= P =
\textrm{(weak)} \lim_{n\to \infty} T T^n = TP.
   \end{align}
Hence, $P$ commutes with $T$.

Now, we show that $\R(P) = \N(I-T)$. Take $x\in\R(P)$.
Since $P=P^2$, we see that $Px=x$. Hence, by
\eqref{komits}, $(I-T)x= (I-T)Px =0$, so
$x\in\N(I-T)$. Conversely, if $x\in \N(I-T)$, then
$T^nx=x$ for all $n\Geq 1$. Passing to the limit as
$n\to \infty$ yields $Px=x$, so $x\in \R(P)$. This
shows that $\ob{P}=\N(I-T)$. By \eqref{komits}, $T$
decomposes as $T=I \dotplus L$ with respect to the
direct decomposition $\H = \R(P) \dotplus \R(I-P)$.


It remains to show that $L$ is weakly stable. For,
since $\{T^n\}_{n=1}^{\infty}$ is weakly convergent,
so is $\{L^n\}_{n=1}^{\infty}$. Applying what was
proved above to $L$ instead of $T$, we see that the
weak limit of $\{L^n\}_{n=1}^{\infty}$ is a projection
acting on $\R(I-P)$ with range $\N(I-L)$. Noting that
$\N(I-L)=\0$, we conclude that $L$ is weakly stable.

(ii) It is easy to verify that under the assumptions
of (ii), $\{T^n\}_{n=1}^{\infty}$ converges weakly to
$Q\in \BH$, where $Q$ decomposes as $Q=I \dotplus 0$
with respect to $\H = \R(P) \dotplus \R(I-P)$. This
implies that $Q=P$.

(iii) This statement follows easily from (i).

Our next goal is to cover the cases of strong and norm
topologies. Due to the similarity of the proof, we
will only consider the case of strong topology. Thus,
if $\{T^n\}_{n=1}^{\infty}$ is strongly convergent to
$P\in \BH$, then it is weakly convergent to $P$ and
so, by (i), $P$ is a projection with $\R(P)=\N(I-T)$
and $T=I \dotplus L$ with respect to $\H = \R(P)
\dotplus \R(I-P)$, where $L$ is weakly stable.
However, the strong convergence of
$\{T^n\}_{n=1}^{\infty}$ implies the strong
convergence of $\{L^n\}_{n=1}^{\infty}$, which
ultimately, due to the uniqueness of limit, gives the
strong stability of $L$. This proves (i) for strong
topology. Similar arguments can be used to prove
statements (ii) and (iii) in the case of strong
topology. This completes the proof.
   \end{proof}
   \begin{remark}
It is a matter of routine to verify that statements
(i) and (ii) of Theorem~\ref{T.1} are equivalent to
(i$^{\prime}$) and (ii$^{\prime}$), respectively,
where
   \begin{enumerate}
   \item[(i$^\prime$)] if $\{T^n\}_{n=1}^{\infty}$
converges weakly $($strongly, in norm$)$ to $P\in
\BH$, then $P$ is a projection with $\R(P)=\N(I-T)$
such that $TP=PT$ and $T|_{\R(I-P)}$ is weakly
$($strongly, uniformly$)$ stable,
   \item[(ii$^{\prime}$)] if $P\in \BH$ is a projection with
$\R(P)=\N(I-T)$ such that $TP=PT$ and $T|_{\R(I-P)}$
is weakly $($strongly, uniformly$)$ stable, then
$\{T^n\}_{n=1}^{\infty}$ converges weakly $($strongly,
in norm$)$ to $P$.
   \hfill $\diamondsuit$
   \end{enumerate}
   \end{remark}
   \begin{corollary} \label{Synyr}
Let $T\in \BH$. Then $\{T^n\}_{n=1}^{\infty}$
converges weakly $($strongly, in norm$)$ to $P\in\BH$
if and only if $T$ is similar to an operator $R\in
\BK$ with the property that $\{R^n\}_{n=1}^{\infty}$
converges weakly $($strongly, in norm$)$ to an
orthogonal projection $Q\in \BK.$ If this is the case,
then $P$ and $Q$ are similar via the same similarity
as $T$ and $R$.
   \end{corollary}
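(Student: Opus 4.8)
The plan is to deduce the corollary from Theorem~\ref{T.1} together with the elementary observation that a similarity carries each of the three modes of power-sequence convergence to itself and transforms limits accordingly. First I would record this invariance principle. Let $W\colon\H\to\K$ be a bounded invertible operator and put $R=WTW^{-1}$, so that $T^n=W^{-1}R^nW$. If $R^n\wconv Q$, then for all $x,y\in\H$ we have $\langle T^nx,y\rangle=\langle R^nWx,(W^{-1})^*y\rangle\to\langle W^{-1}QWx,y\rangle$, that is, $T^n\wconv W^{-1}QW$; the strong and norm cases are immediate from the estimates $\|T^nx-W^{-1}QWx\|\Leq\|W^{-1}\|\,\|R^nWx-QWx\|$ and $\|T^n-W^{-1}QW\|\Leq\|W^{-1}\|\,\|R^n-Q\|\,\|W\|$. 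This principle already settles the backward implication: assuming $R=WTW^{-1}$ and $R^n\to Q$ (with $Q$ an orthogonal projection) in the relevant topology, it yields $T^n\to P:=W^{-1}QW$, where $P$ is similar to $Q$ via the same $W$ and, in particular, $P^2=W^{-1}Q^2W=P$.

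For the forward implication I would invoke Theorem~\ref{T.1}(i): if $T^n\to P$, then $P$ is a projection, $\H=\R(P)\dotplus\R(I-P)$, and $T=I\dotplus L$ with $L$ weakly (strongly, uniformly) stable on $\R(I-P)$. Since $\R(P)$ and $\R(I-P)$ are closed (by the facts recalled before Theorem~\ref{T.1}), the external orthogonal sum $\K:=\R(P)\oplus\R(I-P)$ is a Hilbert space, and the ``straightening'' map $W\colon\H\to\K$, $Wx=Px\oplus(I-P)x$, is bounded with bounded inverse $W^{-1}(u\oplus v)=u+v$. A direct computation then gives $R:=WTW^{-1}=I\oplus L$ and $Q:=WPW^{-1}=I\oplus 0$, the latter being an orthogonal projection on $\K$. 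Because $L$ is stable, \eqref{(1)} (and its strong and norm analogues) yields $R^n=I\oplus L^n\to I\oplus 0=Q$ in the relevant topology, which exhibits the required $R$, with $Q$ similar to $P$ via the same $W$.

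The argument is short once Theorem~\ref{T.1} is available, so there is no deep obstacle; the points that demand care are purely technical. The main one is checking that $W$ and $W^{-1}$ are bounded: boundedness of $W$ rests on boundedness of the idempotent $P$ (hence of $I-P$), while boundedness of $W^{-1}$ and completeness of $\K$ rest on the closedness of $\R(P)$ and $\R(I-P)$. A secondary point is bookkeeping the direction of the similarity so that ``the same similarity'' is literally correct in both implications, namely $R=WTW^{-1}$, $Q=WPW^{-1}$ in one direction and $T=W^{-1}RW$, $P=W^{-1}QW$ in the other. Finally, I would spell out the strong and norm analogues of \eqref{(1)} used above, but these follow at once from $R^n=I\oplus L^n$ and the definition of the orthogonal direct sum.
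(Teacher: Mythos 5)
Your proof is correct and takes essentially the same approach as the paper: both invoke Theorem~\ref{T.1}(i) to see that $P$ is a projection, form the external orthogonal sum $\K=\R(P)\oplus\R(I-P)$ with the straightening similarity (your $W$ is exactly the inverse of the paper's map $S(x,y)=x+y$), and identify $Q=I\oplus 0$ as the orthogonal projection. The only cosmetic difference is that in the forward direction the paper transports the convergence $T^n\to P$ directly through the conjugation $X\mapsto S^{-1}XS$, which is a weak (strong, norm) homeomorphism, whereas you re-derive it from the stability of $L$ together with \eqref{(1)}; both steps are immediate.
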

   \begin{proof}
Suppose that $\{T^n\}_{n=1}^{\infty}$ converges weakly
$($strongly, in norm$)$ to $P\in\BH$. Then by
Theorem~\ref{T.1}(i), $P$ is a projection. Set
$\K=\R(P) \oplus \R(I-P)$ (the exterior orthogonal
sum) and define the operator $S\in \B(\K,\H)$ by $S(x,
y)= x+y$ for $x\in \R(P)$ and $y\in \R(I-P).$ Since
$P$ is a projection, we have $\H=\R(P) \dotplus
\R(I-P)$, so by the inverse mapping theorem, $S$ is
invertible. It is easy to verify that the operator
$Q:=S^{-1}PS$ is an orthogonal projection. Set
$R=S^{-1}TS$. Since the~map
   \begin{align*}
\BH\ni X \longmapsto S^{-1}XS\in \BK
   \end{align*}
is a unital algebra isomorphism which is a weak
$($strong, norm$)$ homeomorphism, we see that
$\{R^n\}_{n=1}^{\infty}$ converges weakly $($strongly,
in norm$)$ to $Q\in\BH$. Reversing the above reasoning
completes the proof.
   \end{proof}
   \begin{remark}
Regarding the recently published
\cite[Theorem~2.1]{CJJS2}, it is worth noting that the
limit operators $P$ and $A$ appearing there are,
respectively, the orthogonal projection of
$\hh_{1\mathrm{u}}$ onto $\N(I-U)$ and the idempotent
with $\R(A)=\N(I-X)$. The first fact is a direct
consequence of Lemma~\ref{P.1}. The second follows
immediately from Theorem~\ref{T.1}.
   \hfill $\diamondsuit$
   \end{remark}
\section{\label{Sec.3.5}Orthogonality of limit projection}
In this section, we generalize part (i) of
Lemma~\ref{P.1} to cover the cases of hyponormal and
contractive operators (see Corollaries~\ref{owcp} and
\ref{C.1Z}, see also Remark~\ref{dymint}), as well as
operators $T$ with $\liminf_{n\to \infty}\|T^n\|\Leq
1$ (see Theorem~\ref{L.1}; see also
Remark~\ref{mynied}).

In view of Theorem~\ref{T.1}(i), if the power sequence
$\{T^n\}_{n=1}^{\infty}$ of an operator $T$ converges
weakly to $P$, then $P$ is a projection. On the other
hand, by Corollary~\ref{Synyr}, such a $T$ is similar
to an operator whose power sequence is weakly
convergent to an orthogonal projection. It is
therefore of interest to answer the question of when
the weak limit of power sequence is an orthogonal
projection. We will begin with the following general
observation that will shed more light on the answer to
the above question.
   \begin{proposition} \label{wqaw}
Let $T\in \B(\H)$. Then the following conditions are
equivalent{\em :}
   \begin{enumerate}
   \item[(i)] $\N(I-T)$ reduces $T$,
   \item[(ii)] $T^*|_{\N(I-T)}$ is the identity
operator,
   \item[(iii)] $T^*|_{\N(I-T)}$ is an isometry,
   \item[(iv)] $\N(I-T)\subseteq \N(I-T^*)$.
   \end{enumerate}
   \end{proposition}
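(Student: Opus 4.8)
The plan is to establish the four equivalences by proving a cycle of implications, exploiting the fact that on the subspace $\N(I-T)$ the operator $T$ acts as the identity, so that the entire content of the proposition concerns how $T^*$ behaves on this subspace. First I would fix notation by writing $\M=\N(I-T)$, so that $Tx=x$ for every $x\in\M$. The cleanest route seems to be $\mathrm{(i)}\Rightarrow\mathrm{(ii)}\Rightarrow\mathrm{(iii)}\Rightarrow\mathrm{(iv)}\Rightarrow\mathrm{(i)}$, since the implication $\mathrm{(ii)}\Rightarrow\mathrm{(iii)}$ is immediate (the identity operator is an isometry) and requires no work.

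For $\mathrm{(i)}\Rightarrow\mathrm{(ii)}$, I would argue that if $\M$ reduces $T$, then $\M$ is invariant for $T^*$, and for $x\in\M$ the restriction $T^*|_{\M}$ is the adjoint of $T|_{\M}=I|_{\M}$ computed within the Hilbert space $\M$; since the adjoint of the identity is the identity, $T^*|_{\M}$ is the identity on $\M$. For the step $\mathrm{(iii)}\Rightarrow\mathrm{(iv)}$, which I expect to be the crux of the argument, I would use the Cauchy--Schwarz inequality together with the isometry hypothesis. The key computation is that for $x\in\M$ we have $\|T^*x\|=\|x\|$ by (iii), while at the same time, because $Tx=x$, the inner product $\is{T^*x}{x}=\is{x}{Tx}=\is{x}{x}=\|x\|^2$. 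Combining these gives the equality case in Cauchy--Schwarz: $|\is{T^*x}{x}|=\|x\|^2=\|T^*x\|\,\|x\|$, forcing $T^*x$ to be a scalar multiple of $x$, and the value $\is{T^*x}{x}=\|x\|^2$ pins that scalar down to $1$, so $T^*x=x$, i.e.\ $x\in\N(I-T^*)$.

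Finally, for $\mathrm{(iv)}\Rightarrow\mathrm{(i)}$, the inclusion $\M\subseteq\N(I-T^*)$ says that every $x\in\M$ satisfies $T^*x=x$, in particular $T^*x\in\M$, so $\M$ is invariant for $T^*$; since $\M=\N(I-T)$ is by definition invariant for $T$, it reduces $T$. The main obstacle, and the step deserving the most care, is $\mathrm{(iii)}\Rightarrow\mathrm{(iv)}$, because it is the only place where an inequality (Cauchy--Schwarz) must be turned into an exact identity $T^*x=x$; the norm-preservation hypothesis and the identity $Tx=x$ must be used in tandem to force the equality case and then to identify the scalar. The remaining implications are essentially formal consequences of the definitions of reducing subspace and of the adjoint, and should go through with minimal computation.
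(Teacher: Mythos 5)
Your proof is correct; all four implications go through (the $x=0$ case of your Cauchy--Schwarz step is trivial, and for $x\neq 0$ the equality case argument is sound). It does, however, take a different route from the paper at the crux. The paper proves the cycle (i)$\Rightarrow$(ii)$\Rightarrow$(iii)$\Rightarrow$(i) and dismisses (ii)$\Leftrightarrow$(iv) as obvious; its pivot is the single identity
\begin{align*}
I|_{\N(I-T)}=\bigl(T|_{\N(I-T)}\bigr)^{*}=Q\,T^{*}|_{\N(I-T)},
\end{align*}
where $Q\in\B(\H)$ is the orthogonal projection of $\H$ onto $\N(I-T)$. This identity gives (i)$\Rightarrow$(ii) at once, and for (iii)$\Rightarrow$(i) it yields $\|x\|=\|QT^{*}x\|\Leq\|T^{*}x\|=\|x\|$, hence $\|QT^{*}x\|=\|T^{*}x\|$, which forces $T^{*}x\in\N(I-T)$, i.e.\ invariance under $T^{*}$. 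You instead close the cycle through (iii)$\Rightarrow$(iv), replacing the projection argument by the equality case of Cauchy--Schwarz: from $\is{T^{*}x}{x}=\is{x}{Tx}=\|x\|^{2}$ and $\|T^{*}x\|=\|x\|$ you force $T^{*}x=x$ directly. The two mechanisms are equally elementary and exploit the same rigidity (the part of $T^{*}x$ seen by $\N(I-T)$ already carries its full norm), but yours delivers the pointwise identity $T^{*}x=x$ --- that is, (iv), and hence (ii) --- in one stroke, whereas the paper's argument concludes only $T^{*}$-invariance and reads off (ii) from the displayed identity. One small remark: your step (i)$\Rightarrow$(ii) tacitly uses the same fact the paper isolates, namely that for a reducing subspace the adjoint of the restriction is the restriction of the adjoint; to make it self-contained, note that for $x,y\in\N(I-T)$ one has $\is{T^{*}x}{y}=\is{x}{Ty}=\is{x}{y}$, so $T^{*}x-x\perp\N(I-T)$, while (i) puts $T^{*}x-x$ inside $\N(I-T)$, whence $T^{*}x=x$.
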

   \begin{proof}
First, observe that
   \begin{align} \label{igsq}
I|_{\N(I-T)}=(T|_{\N(I-T)})^* = Q T^*|_{\N(I-T)},
   \end{align}
where $Q\in \B(\H)$ is the orthogonal projection of
$\H$ onto $\N(I-T)$.

(i)$\Rightarrow$(ii) This is a direct consequence of
\eqref{igsq}.

(ii)$\Rightarrow$(iii) Trivial.

(iii)$\Rightarrow$(i) Note that
   \begin{align*}
\|x\| \overset{\eqref{igsq}}= \|QT^*x\| \Leq \|T^*x\|
\overset{\text{(iii)}}= \|x\|, \quad x\in \N(I-T).
   \end{align*}
Hence $\|QT^*x\| = \|T^*x\|$ for every $x\in \N(I-T)$.
This implies that $\N(I-T)$ is invariant for $T^*$,
and thus $\N(I-T)$ reduces $T$.

The equivalence (ii)$\Leftrightarrow$(iv) is obvious.
   \end{proof}
The general answer to our question is as follows.
   \begin{theorem} \label{pytxly}
Suppose that $T, P\in \B(\H)$ and $T^n\wconv P$ as
$n\to\infty$. Then the following statements are
equivalent{\em :}
   \begin{enumerate}
   \item[(i)]
$P$ is an orthogonal projection,
   \item[(ii)]
$\N(I-T)=\N(I-T^*)$,
   \item[(iii)] $\N(I-T)$ reduces $T$.
   \end{enumerate}
   If {\em (i)} holds, then $P$ is the
orthogonal projection of $\H$ onto $\N(I-T)$.
   \end{theorem}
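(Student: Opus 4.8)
The plan is to exploit the symmetry between $T$ and $T^*$ together with Theorem~\ref{T.1}(i). First I would record the ambient structure: since $T^n\wconv P$, Theorem~\ref{T.1}(i) gives that $P$ is a projection with $\R(P)=\N(I-T)$ and $TP=PT$. The decisive observation is that weak convergence is preserved under taking adjoints, so $(T^*)^n\wconv P^*$; applying Theorem~\ref{T.1}(i) to $T^*$ then yields that $P^*$ is a projection with $\R(P^*)=\N(I-T^*)$. I would also invoke the standard range--kernel identity $\R(P^*)=\N(P)^\perp$, which is legitimate here because the range of a bounded idempotent is closed.

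With these in hand, the equivalence (i)$\Leftrightarrow$(ii) becomes essentially formal. A projection is orthogonal precisely when it is selfadjoint, so I would argue that $P=P^*$ holds if and only if $\R(P)=\R(P^*)$: the forward implication is trivial, and for the converse the equality $\R(P)=\R(P^*)=\N(P)^\perp$ forces the range and kernel of $P$ to be orthogonal complements, which characterizes orthogonal projections. Translating $\R(P)=\R(P^*)$ through $\R(P)=\N(I-T)$ and $\R(P^*)=\N(I-T^*)$ gives exactly condition (ii).

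For (ii)$\Leftrightarrow$(iii) I would route everything through Proposition~\ref{wqaw}: the equivalence there of ``$\N(I-T)$ reduces $T$'' with ``$\N(I-T)\subseteq\N(I-T^*)$'' shows that (iii) amounts to this inclusion. The implication (ii)$\Rightarrow$(iii) is then immediate, since equality yields the inclusion. The reverse implication is the one delicate point, and I expect it to be the main obstacle: from $\N(I-T)\subseteq\N(I-T^*)$ I would rewrite the inclusion as $\R(P)\subseteq\R(P^*)=\N(P)^\perp$, that is $\R(P)\perp\N(P)$, and then I must upgrade this mere orthogonality to the full orthogonal-complement relation. Here I would use that $\H=\R(P)\dotplus\N(P)$ is already an algebraic direct sum, so a short computation (decomposing an arbitrary $c\in\R(P)^\perp$ along this sum and taking inner products) shows $\R(P)^\perp\subseteq\N(P)$ as well, whence $\N(P)=\R(P)^\perp$ and $P$ is orthogonal; in particular $\R(P)=\R(P^*)$, which is (ii).

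Finally, the closing assertion is automatic: once (i) holds, $P$ is a selfadjoint idempotent whose range is $\N(I-T)$, hence the orthogonal projection of $\H$ onto $\N(I-T)$.
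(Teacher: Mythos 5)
Your proof is correct, but it takes a genuinely different route from the paper's on the one nontrivial implication. Like the paper, you get (i)$\Rightarrow$(ii) by applying Theorem~\ref{T.1}(i) to both $T$ and $T^*$ via weak continuity of the adjoint, and you get (ii)$\Rightarrow$(iii) essentially for free (the paper calls it obvious; you route it through Proposition~\ref{wqaw}). The divergence is at (iii)$\Rightarrow$(i): the paper argues dynamically --- it decomposes $T=I\oplus L$ orthogonally along $\H=\N(I-T)\oplus\N(I-T)^\perp$, uses \eqref{(1)} to see that $\{L^n\}_{n=1}^{\infty}$ is weakly convergent with $\N(I-L)=\0$, invokes Theorem~\ref{T.1}(iii) to conclude $L$ is weakly stable, and thereby identifies the weak limit as $I\oplus 0$, the orthogonal projection onto $\N(I-T)$. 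You instead stay inside the geometry of idempotents: Proposition~\ref{wqaw} converts (iii) into $\N(I-T)\subseteq\N(I-T^*)$, Theorem~\ref{T.1}(i) applied to the adjoint sequence identifies $\R(P^*)=\N(I-T^*)$, the range--kernel identity $\R(P^*)=\N(P)^\perp$ (valid since $P^*$ is an idempotent with closed range) turns the inclusion into $\R(P)\perp\N(P)$, and your direct-sum computation correctly upgrades this to $\N(P)=\R(P)^\perp$, i.e.\ to orthogonality of $P$. What the paper's route buys is that it reconstructs the limit explicitly, so the ``moreover'' clause falls out simultaneously; what your route buys is that it avoids Theorem~\ref{T.1}(iii) and the orthogonal-sum convergence property \eqref{(1)} entirely, replacing the stability argument with pure Hilbert-space algebra, and it also yields (ii)$\Rightarrow$(i) directly rather than by composing (ii)$\Rightarrow$(iii)$\Rightarrow$(i). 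Both arguments rest ultimately on Theorem~\ref{T.1}(i) applied to $T$ and $T^*$, so the difference is one of technique rather than of depth.
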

   \begin{proof}
(i)$\Rightarrow$(ii) Suppose that $P$ is an orthogonal
projection. Using the fact that the adjoint operation
on $\B(\H)$ is weakly continuous, we see that
$T^{*n}\wconv P^*$ as $n\to\infty$. Since $P=P^*$, we
deduce from Theorem~\ref{T.1}(i) applied to $T$ and
$T^{*}$ that
   \begin{align*}
\N(I-T)=\R(P)=\R(P^*)=\N(I-T^*).
   \end{align*}

(ii)$\Rightarrow$(iii) This implication is obvious.

(iii)$\Rightarrow$(i) By (iii), $T$ decomposes as
   \begin{align} \label{iltsX}
T=I\oplus L
   \end{align}
with respect to the orthogonal decomposition $\H =
\N(I-T) \oplus \N(I-T)^\perp.$ Since
$\{T^n\}_{n=1}^{\infty}$ is weakly convergent, we
infer from \eqref{(1)} and \eqref{iltsX} that
$\{L^n\}_{n=1}^{\infty}$ is also weakly convergent.
Noting that $\N(I-L) = \{0\}$, we deduce from
Theorem~\ref{T.1}(iii) that $L$ is weakly stable.
Using \eqref{iltsX} again, we conclude that
$\{T^n\}_{n=1}^{\infty}$ converges weakly to $I\oplus
0$, which is the orthogonal projection of $\H$ onto
$\N(I-T)$.
%
   \end{proof}
It is worth mentioning that implication
(ii)$\Rightarrow$(iii) of Theorem~\ref{pytxly} is
valid for any Hilbert space operator $T$ regardless of
whether the sequence $\{T^n\}_{n=1}^{\infty}$ is
weakly convergent or not (see also
Proposition~\ref{wqaw}). The proofs of the remaining
implications (i)$\Rightarrow$(ii) and
(iii)$\Rightarrow$(i) appeal to Theorem~\ref{T.1}.

With $\varsubsetneq$ denoting proper inclusion, we
obtain the following result, which is a direct
consequence of Proposition~\ref{wqaw} and
Theorem~\ref{pytxly}.
   \begin{corollary} \label{geqpm}
If $T\in \B(\H)$ is such that $\N(I-T)\varsubsetneq
\N(I-T^*)$, then $\N(I-T)$ reduces $T$ and the power
sequence $\{T^n\}_{n=1}^{\infty}$ is not weakly
convergent to an orthogonal projection, and thus $T$
is not weakly stable.
   \end{corollary}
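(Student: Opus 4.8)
The plan is to obtain all three assertions directly from Proposition~\ref{wqaw} and Theorem~\ref{pytxly}, with the second and third handled by a short contradiction argument. First I would observe that the hypothesis $\N(I-T)\varsubsetneq\N(I-T^*)$ entails, in particular, the inclusion $\N(I-T)\subseteq\N(I-T^*)$, which is exactly condition~(iv) of Proposition~\ref{wqaw}. Invoking the equivalence (i)$\Leftrightarrow$(iv) proved there yields at once that $\N(I-T)$ reduces $T$, so the first claim requires no extra work. (Note that this step uses only the inclusion, not its properness, and is valid regardless of any convergence assumption on $\{T^n\}_{n=1}^{\infty}$.)

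For the second claim I would argue by contradiction: suppose that $\{T^n\}_{n=1}^{\infty}$ converges weakly to some orthogonal projection $P$. Then Theorem~\ref{pytxly}, read in the direction (i)$\Rightarrow$(ii), forces the equality $\N(I-T)=\N(I-T^*)$. This contradicts the assumed \emph{proper} inclusion $\N(I-T)\varsubsetneq\N(I-T^*)$. Hence no orthogonal projection can serve as the weak limit of the power sequence, i.e.\ $\{T^n\}_{n=1}^{\infty}$ is not weakly convergent to an orthogonal projection.

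The final assertion then costs only one further observation: the zero operator is itself an orthogonal projection, being selfadjoint and idempotent with range $\0$. Consequently, weak stability of $T$, which by definition means $T^n\wconv 0$, would be a particular instance of weak convergence of the power sequence to an orthogonal projection — a situation the previous paragraph has just excluded. Therefore $T$ cannot be weakly stable. There is no genuine obstacle here; the corollary is a formal consequence of the two cited results, and the only point demanding a moment's care is recognizing that weak stability is subsumed under weak convergence to an orthogonal projection (via the trivial projection $0$), so that the contradiction established in the second step simultaneously rules out weak stability.
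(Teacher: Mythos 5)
Your proposal is correct and takes essentially the same route as the paper, which states the corollary as a direct consequence of Proposition~\ref{wqaw} and Theorem~\ref{pytxly}: you use the inclusion $\N(I-T)\subseteq\N(I-T^*)$ with Proposition~\ref{wqaw} ((iv)$\Rightarrow$(i)) to get the reduction, properness with Theorem~\ref{pytxly} ((i)$\Rightarrow$(ii)) to exclude an orthogonal-projection limit, and the observation that $0$ is an orthogonal projection to rule out weak stability. Nothing is missing.
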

An explicit example of an operator $T\in \B(\H)$ for
which $\N(I-T)$ reduces $T$, but $\N(I-T)\varsubsetneq
\N(I-T^*)$, is given in Example~\ref{pvzxk}.

According to Theorem~\ref{pytxly}, if the power
sequence $\{T^n\}_{n=1}^{\infty}$ of an operator $T$
converges weakly to $P$, then the requirement that
$\N(I-T)$ reduces $T$ is necessary and sufficient for
$P$ to be an orthogonal projection. Under this
assumption, the weak convergence of the power sequence
$\{T^n\}_{n=1}^{\infty}$ is completely determined by
the weak stability of $L$, where $L$ is as in
\eqref{iltsX}.
   \begin{corollary}\label{owcp}
Let $T\in\BH$ be an operator such that $\N(I-T)$
reduces $T$ and let
   \begin{align*}
T=I\oplus L
   \end{align*}
be the orthogonal decomposition of $T$ with respect to
the decomposition
   \begin{align*}
\H = \N(I-T) \oplus \N(I-T)^\perp.
   \end{align*}
Then the power sequence $\{T^n\}_{n=1}^{\infty}$ is
weakly $($strongly, norm\/$)$ convergent if and only
if $L$ is weakly $($strongly, uniformly\/$)$ stable,
and if this is the case, then the weak $($strong,
norm\/$)$ limit of $\{T^n\}_{n=1}^{\infty}$ is the
orthogonal projection of $\H$ onto $\N({I-T})$ and
$\N(I-T)= \N(I-T^*)$.
   \end{corollary}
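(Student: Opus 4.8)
The plan is to derive this corollary almost mechanically from the orthogonal-sum behavior of convergence recorded in \eqref{(1)}, from Theorem~\ref{T.1}(iii), and from Theorem~\ref{pytxly}; no genuinely new argument is needed.

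First I would observe that $L$ has no nonzero fixed vectors, i.e.\ $\N(I-L)=\0$. Indeed, writing $T=I\oplus L$ relative to $\H=\N(I-T)\oplus\N(I-T)^\perp$ gives $I-T=0\oplus(I-L)$, so any $x\in\N(I-T)^\perp$ with $Lx=x$ would also lie in $\N(I-T)$, forcing $x=0$.

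Next, since $T^n=I\oplus L^n$ for every $n\Ge 1$ and the constant sequence $I^n\equiv I$ converges, in every topology, to $I$, the orthogonal-sum criterion \eqref{(1)}, together with its strong and norm analogues noted immediately after \eqref{(1)}, shows that $\{T^n\}_{n=1}^\infty$ converges weakly (strongly, in norm) if and only if $\{L^n\}_{n=1}^\infty$ does. Combining this with Theorem~\ref{T.1}(iii) applied to $L$ and using $\N(I-L)=\0$ from the previous step, I obtain that $\{T^n\}_{n=1}^\infty$ is weakly (strongly, norm) convergent if and only if $\{L^n\}_{n=1}^\infty$ is, if and only if $L$ is weakly (strongly, uniformly) stable. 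This is the asserted equivalence.

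Finally, to identify the limit, suppose $L$ is stable, so that $L^n$ converges, in the appropriate topology, to $0$; then $T^n=I\oplus L^n$ converges to $I\oplus 0$, which is precisely the orthogonal projection of $\H$ onto $\N(I-T)$. Since this limit is an orthogonal projection and, a fortiori, $\{T^n\}_{n=1}^\infty$ is weakly convergent, the implication (i)$\Rightarrow$(ii) of Theorem~\ref{pytxly} yields $\N(I-T)=\N(I-T^*)$. I expect no real obstacle: the whole argument is bookkeeping on top of results already established, the only points needing minor care being the verification $\N(I-L)=\0$ and the invocation of the strong and norm versions of \eqref{(1)}, which is legitimate because strong and norm convergence each imply weak convergence, so that Theorem~\ref{pytxly} applies through the common weak limit.
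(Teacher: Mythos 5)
Your proposal is correct and follows essentially the same route as the paper: the paper's proof of Corollary~\ref{owcp} simply points back to the argument for implication (iii)$\Rightarrow$(i) of Theorem~\ref{pytxly}, which is exactly what you spell out — use \eqref{(1)} (and its strong/norm analogues) to pass between $\{T^n\}_{n=1}^{\infty}$ and $\{L^n\}_{n=1}^{\infty}$, note $\N(I-L)=\0$, apply Theorem~\ref{T.1}(iii) to $L$, and identify the limit $I\oplus 0$ as the orthogonal projection onto $\N(I-T)$, with $\N(I-T)=\N(I-T^*)$ coming from Theorem~\ref{pytxly}. The only difference is that you unpack explicitly what the paper cites by reference, including the small verification that $\N(I-L)=\0$.
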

   \begin{proof}
As in the proof of implication (iii)$\Rightarrow$(i)
of Theorem~\ref{pytxly}, we see that if the power
sequence $\{T^n\}_{n=1}^{\infty}$ is weakly (strongly,
norm) convergent, then $L$ is weakly (strongly,
uniformly) stable. The converse implication is
obvious. It follows from Theorem~\ref{pytxly} that the
weak (strong, norm) limit of $\{T^n\}_{n=1}^{\infty}$
is the orthogonal projection of $\H$ onto $\N({I-T})$.
   \end{proof}
   \begin{corollary} \label{C.1Z}
Let $T\in\BH$ be a hyponormal $($resp., contractive$)$
operator. Then
   \begin{enumerate}
   \item[(i)] $\N(I-T)$ reduces $T$ and $T$ decomposes as
$T=I\oplus L$ with respect to the orthogonal
decomposition $\H = \N(I-T) \oplus \N(I-T)^\perp$,
where $L$ is a hyponormal $($resp., contractive$)$
operator on $\N(I-T)^\perp,$
   \item[(ii)] $\{T^n\}_{n=1}^{\infty}$ is weakly
$($strongly, norm\/$)$ convergent if and only if $L$
is weakly $($strongly, uniformly\/$)$ stable, and if
this is the case, then the weak $($strong, norm\/$)$
limit of $\{T^n\}_{n=1}^{\infty}$ is the orthogonal
projection of $\H$ onto $\N({I-T})$ and $\N(I-T)=
\N(I-T^*)$.
   \end{enumerate}
   \end{corollary}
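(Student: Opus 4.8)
The plan is to reduce the whole statement to the already-established Corollary~\ref{owcp}. The only genuinely new content lies in part~(i), and within it in the single assertion that $\N(I-T)$ reduces $T$; once this is secured, the orthogonal decomposition $T=I\oplus L$ and the inheritance of the hyponormal (resp.\ contractive) property by $L$ are routine, and part~(ii) is nothing but Corollary~\ref{owcp} applied to $T$.

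By Proposition~\ref{wqaw}, to prove that $\N(I-T)$ reduces $T$ it suffices to verify the inclusion $\N(I-T)\subseteq\N(I-T^*)$. I would fix $x\in\N(I-T)$, so that $Tx=x$, and first establish the estimate $\|T^*x\|\Leq\|x\|$ uniformly in both cases: when $T$ is contractive it is immediate from $\|T^*\|=\|T\|\Leq 1$, while when $T$ is hyponormal it follows from $\|T^*x\|\Leq\|Tx\|=\|x\|$. A direct expansion then settles the matter; using $\is{x}{T^*x}=\is{Tx}{x}=\|x\|^2$ one gets
\begin{align*}
\|x-T^*x\|^2=\|x\|^2-2\re\is{x}{T^*x}+\|T^*x\|^2=\|T^*x\|^2-\|x\|^2\Leq 0,
\end{align*}
whence $T^*x=x$ and $x\in\N(I-T^*)$. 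This yields the desired inclusion and hence, via Proposition~\ref{wqaw}, the fact that $\N(I-T)$ reduces $T$.

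With $\N(I-T)$ reducing $T$, the orthogonal decomposition $T=I\oplus L$ relative to $\H=\N(I-T)\oplus\N(I-T)^\perp$ is immediate, the first summand being $T|_{\N(I-T)}=I$. It remains to check that $L=T|_{\N(I-T)^\perp}$ inherits the relevant property. For a contraction this is clear, since $\|Lx\|=\|Tx\|\Leq\|x\|$. For a hyponormal $T$ I would use that on a reducing subspace the adjoint of the restriction equals the restriction of the adjoint, so that the operator inequality $TT^*\Leq T^*T$ restricts to $LL^*\Leq L^*L$; this completes part~(i).

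Part~(ii) is then obtained by invoking Corollary~\ref{owcp} for $T$, which is legitimate precisely because part~(i) guarantees that $\N(I-T)$ reduces $T$. I do not expect a serious obstacle here; the two points deserving care are the uniform treatment of the contractive and hyponormal cases in the fixed-point computation and the (standard) observation that hyponormality passes to restrictions on reducing, rather than merely invariant, subspaces.
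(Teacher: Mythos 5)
Your proposal is correct, and its skeleton coincides with the paper's: show that $\N(I-T)$ reduces $T$ via the inclusion $\N(I-T)\subseteq\N(I-T^*)$, then invoke Corollary~\ref{owcp}. The difference lies in how that inclusion is obtained. The paper simply cites the literature --- \cite[Proposition~II.4.4]{Co91} for hyponormal operators (eigenvectors of a hyponormal operator are eigenvectors of its adjoint, and restrictions to invariant subspaces stay hyponormal) and \cite[Proposition~I.3.1]{Sz.-N-Fo70} for contractions --- whereas you prove the inclusion from scratch: the bound $\|T^*x\|\Leq\|x\|$ for $x\in\N(I-T)$ (immediate in the contractive case, and following from $\|T^*x\|\Leq\|Tx\|$ in the hyponormal case) combined with the identity
\begin{align*}
\|x-T^*x\|^2=\|x\|^2-2\re\is{x}{T^*x}+\|T^*x\|^2=\|T^*x\|^2-\|x\|^2\Leq 0
\end{align*}
gives $T^*x=x$. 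This computation is essentially the standard proof of the two cited facts, so your route is not structurally different, but it has two genuine merits: it treats the hyponormal and contractive cases uniformly in a single argument, and, since you only need hyponormality to pass to a \emph{reducing} subspace (where $(T|_{\mcal})^*=T^*|_{\mcal}$ makes the inequality $LL^*\Leq L^*L$ immediate), you avoid the slightly deeper invariant-subspace inheritance that the paper quotes from Conway. Part~(ii) is then, exactly as in the paper, a direct application of Corollary~\ref{owcp}.
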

   \begin{proof}
If $T$ is hyponormal, then the restriction of $T$ to
its invariant subspace is hyponormal and $\N(I-T) \sse
\N(I-T^*)$ (see \cite[Proposition~II.4.4]{Co91}), so
$\N(I-T)$ reduces $T$. In turn, if $T$ is contractive,
then by \cite[Proposition~I.3.1]{Sz.-N-Fo70},
$\N(I-T)=\N(I-T^*)$, so $\N(I-T)$ reduces $T$. Now we
can apply Corollary~\ref{owcp}.
   \end{proof}
For an operator $T\in \B(\H)$ reduced by $\N(I-T)$,
the restriction of $T$ to $\N(I-T)$ is called the {\em
identity part} of $T$. By Corollary~\ref{C.1Z}, we get
the following.
   \begin{align*}
   \begin{minipage}{70ex}
{\em A hyponormal or a contractive operator with no
identity part is weakly $($strongly, uniformly$)$
stable if and only if its power sequence converges
weakly $($strongly, in norm$)$.}
   \end{minipage}
   \end{align*}
   \begin{remark} \label{dymint}
There are more classes of Hilbert space operators $T$
for which the eigenspace $\N(I-T)$ reduces $T$. For
instance, this is the case for operators having the
property that $\N(\alpha I-T)\sse \N(\overline\alpha
I-T^*)$ for every $\alpha\in\CC$. This property, in
turn, is possessed by so-called {\em dominant
operators}, i.e., operators $T$ such that ${\R(\alpha
I-T)} \sse \R(\overline\alpha I-T^*)$ for every
$\alpha \in \CC$ (see \cite{Stam77}). Let us mention
that the class of dominant operators on an infinite
dimensional Hilbert space is essentially larger than
the class of hyponormal operators. For the discussion
of the case of numerical contractions, i.e., operators
$T$ with $w(T)\Leq 1$ (less restrictive than
$\|T\|\Leq 1$), we refer the reader to
Remark~\ref{mynied}.
   \hfill $\diamondsuit$
   \end{remark}
   \begin{example} \label{pvzxk}
Let $\H$ be an infinite-dimensional separable Hilbert
space and let $\{e_n\}_{n=0}^{\infty}$ be an
orthonormal basis of $\H$. Take a bounded sequence
$\{\lambda_n\}_{n=0}^{\infty}$ of positive real
numbers. Then there exists a unique operator $T\in
\BH$, called a weighted shift (with weights
$\{\lambda_n\}_{n=0}^{\infty}$), such that
$Te_n=\lambda_n e_{n+1}$ for all integers $n \Geq 0$.
Assume that the sequence
$\{\lambda_n\}_{n=0}^{\infty}$ is monotonically
increasing to $\lambda_{\infty}\in (1,\infty)$. Then
the weighted shift $T$ is hyponormal (see \cite[p.\
83, Lemma]{shi74}). It is well known that the point
spectrum of $T$ is empty (see
\cite[Theorem~8(i)]{shi74}), so $\N(I-T)=\{0\}$
reduces $T$. It follows from \cite[Theorem~1]{Sta66}
that the point spectrum of $T^*$ is equal to $\{z\in
\cbb\colon |z| < \lambda_{\infty}\}$. Since
$\lambda_{\infty} > 1$, we see that $1$ is in the
point spectrum of $T^*$. Hence, $\N(I-T) \varsubsetneq
\N(I-T^*)$, thus by Corollary~\ref{geqpm} the power
sequence $\{T^n\}_{n=1}^{\infty}$ is not weakly
convergent to an orthogonal projection and so $T$ is
not weakly stable. Using \cite[Theorem~5]{Sta66} and
the fact that subnormal operators are hyponormal, we
can modify the weights $\{\lambda_n\}_{n=0}^{\infty}$
of $T$ so that $T$ is not only hyponormal, but also
subnormal (see also \cite[Proposition~25]{shi74}).
Finally, by considering the operator $I\oplus T$ with
the above $T$, we obtain an operator $S$ with the
property that $\N(I-S)$ reduces $S$, $\N(I-S)\neq
\{0\}$ and $\N(I-S) \varsubsetneq \N(I-S^*)$.
   \hfill $\diamondsuit$
   \end{example}
In Theorem~\ref{L.1} below, we will continue the
discussion of the question of when the weak limit of
the power sequence of an operator $T$ is an orthogonal
projection. We will give a sufficient condition for
this, assuming that $\liminf_{n\to\infty} \|T^n\| \Leq
1$ (see also Remark~\ref{mynied}). Before stating the
result, we show that the new assumption is closely
related to the spectral radius of $T$ and to the
uniform stability of $T$. For the reader's
convenience, we sketch the proof of the equivalence of
conditions (i)-(iv) below (cf.\ the proof of
\cite[Theorem~3]{Ku-Vi08}).
   \begin{lemma} \label{limyng}
Suppose that $T \in \B(\H)$. Then the following
conditions are equivalent{\em :}
   \begin{enumerate}
   \item[(i)] $T$ is uniformly stable,
   \item[(ii)] $\liminf_{n\to \infty}\|T^n\| < 1$,
   \item[(iii)] $\inf_{n\Geq k}\|T^n\| < 1$ for some integer $k\Ge 1$,
   \item[(iv)] $r(T)<1$.
   \end{enumerate}
Moreover, if $\liminf_{n\to \infty}\|T^n\| = 1$, then
$r(T)=1$.
   \end{lemma}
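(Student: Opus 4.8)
The plan is to establish the four-way equivalence through the cycle (i)$\Rightarrow$(ii)$\Rightarrow$(iii)$\Rightarrow$(iv)$\Rightarrow$(i), where the closing implication (iv)$\Rightarrow$(i) is exactly the content of the already-recorded fact \eqref{ustypiy} and so costs nothing. The two workhorses throughout are Gelfand's spectral radius formula $r(T)=\lim_{n\to\infty}\|T^n\|^{1/n}$ and the elementary estimate $r(T)^n=r(T^n)\Leq\|T^n\|$, the first equality being the spectral-mapping property of the spectral radius and the inequality being $r(S)\Leq\|S\|$ applied to $S=T^n$.

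First I would dispatch the easy implications. For (i)$\Rightarrow$(ii): uniform stability means $\|T^n\|\to 0$, whence $\liminf_{n\to\infty}\|T^n\|=0<1$. For (ii)$\Rightarrow$(iii): the sequence $k\mapsto\inf_{n\Geq k}\|T^n\|$ is non-decreasing with limit $\liminf_{n\to\infty}\|T^n\|$, so $\inf_{n\Geq k}\|T^n\|\Leq\liminf_{n\to\infty}\|T^n\|<1$ for every $k$, and in particular (iii) holds (e.g.\ with $k=1$). For (iii)$\Rightarrow$(iv): if $\inf_{n\Geq k}\|T^n\|<1$, then by the definition of infimum there is an integer $m\Geq k$ with $\|T^m\|<1$, and then $r(T)^m=r(T^m)\Leq\|T^m\|<1$ forces $r(T)<1$. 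This closes the cycle once (iv)$\Rightarrow$(i) is invoked via \eqref{ustypiy}.

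It then remains to settle the Moreover clause. Assuming $\liminf_{n\to\infty}\|T^n\|=1$, this value is not strictly below $1$, so the already-proved equivalence (ii)$\Leftrightarrow$(iv) yields $r(T)\Geq 1$. For the reverse inequality I would use $r(T)^n\Leq\|T^n\|$ once more: if $r(T)>1$, the left-hand side tends to infinity, forcing $\liminf_{n\to\infty}\|T^n\|=\infty$ and contradicting the hypothesis; hence $r(T)\Leq 1$, and the two bounds give $r(T)=1$. Every step is a direct consequence of standard spectral theory, so I anticipate no genuine obstacle; the only points demanding a little care are the passage from the hypothesis $\liminf<1$ to the existence of a \emph{single} index $m$ with $\|T^m\|<1$ (handled by the monotonicity of $\inf_{n\Geq k}\|T^n\|$ and the definition of infimum), and the clean splitting of the boundary case $\liminf=1$ into the two inequalities $r(T)\Geq 1$ and $r(T)\Leq 1$.
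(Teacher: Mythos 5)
Your proof is correct. For the four-way equivalence your route is essentially the paper's: the same chain (i)$\Rightarrow$(ii)$\Rightarrow$(iii)$\Rightarrow$(iv), closed by the known equivalence of uniform stability with $r(T)<1$ (i.e.\ \eqref{ustypiy}); the only cosmetic difference is that in (iii)$\Rightarrow$(iv) you invoke the spectral mapping identity $r(T^m)=r(T)^m$ together with $r(T^m)\Leq\|T^m\|$, where the paper applies the spectral radius formula $r(T)=\inf_{n\Geq 1}\|T^n\|^{1/n}\Leq\|T^{\ell}\|^{1/\ell}$ directly --- the same tool in two guises. Where you genuinely diverge is the ``moreover'' clause. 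The paper argues by direct computation: it extracts a subsequence with $\|T^{k_n}\|\to 1$ and uses the elementary fact that $\alpha_n\to 1$ and $\ell_n\to\infty$ imply $\alpha_n^{1/\ell_n}\to 1$, applied with $\ell_n=k_n$, to conclude $r(T)=\lim_{n\to\infty}\|T^{k_n}\|^{1/k_n}=1$ from the spectral radius formula. You instead squeeze: the failure of (ii) gives $r(T)\Geq 1$ through the already-established equivalence (ii)$\Leftrightarrow$(iv), and the growth bound $r(T)^n\Leq\|T^n\|$ rules out $r(T)>1$, since that would force $\|T^n\|\to\infty$ and hence $\liminf_{n\to\infty}\|T^n\|=\infty$. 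Both arguments are sound; yours recycles the equivalence and avoids any subsequence extraction, while the paper's is a self-contained one-line computation that does not depend on having proved the equivalence first. Either would serve.
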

   \begin{proof} The implications (i)$\Rightarrow$(ii)
and (ii)$\Rightarrow$(iii) are trivial.

(iii)$\Rightarrow$(iv) Since $\inf_{n\Geq k}\|T^n\| <
1$, there exists an integer $\ell\Ge k$ such that
$\|T^{\ell}\| < 1$. Then, by the spectral radius
formula (see \cite[Theorem~10.13]{Rud}), we get
   \begin{align*}
r(T)= \inf_{n \Geq 1} \|T^n\|^{1/n} \Leq
\|T^{\ell}\|^{1/\ell} < 1.
   \end{align*}

(i)$\Leftrightarrow$(iv) This equivalence is well
known (see \cite[p.\ 10]{MDOT}).

Assume that $\liminf_{n\to \infty}\|T^n\| =1$. Then
there exists a subsequence
$\{\|T^{k_n}\|\}_{n=1}^{\infty}$ tending to $1$. Since
$\lim_{n\to\infty} \alpha_n=1$ implies that
$\lim_{n\to\infty} \alpha_n^{1/\ell_n}=1$ for any
sequence $\{\alpha_n\}_{n=1}^{\infty}$ of positive
real numbers and any sequence
$\{\ell_n\}_{n=1}^{\infty}$ of positive integers
tending to $\infty$, we deduce that $\lim_{n\to\infty}
\|T^{k_n}\|^{1/\ell_n} = 1$. Applying this to
$\ell_n=k_n$ and using the spectral radius formula, we
see that
   \begin{align*}
r(T) = \lim_{n\to\infty} \|T^n\|^{1/n} =
\lim_{n\to\infty} \|T^{k_n}\|^{1/k_n} = 1.
   \end{align*}
This completes the proof.
   \end{proof}
   \begin{corollary}
If $T \in \B(\H)$, then the following conditions are
equivalent{\em :}
   \begin{enumerate}
   \item[(i)] $\liminf_{n\to \infty}\|T^n\| \Leq 1$,
   \item[(ii)] either $T$ is uniformly stable or
$\liminf_{n\to \infty}\|T^n\|=1$.
   \end{enumerate}
   \end{corollary}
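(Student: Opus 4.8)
The plan is to read off the equivalence directly from Lemma~\ref{limyng}, which already establishes that $T$ is uniformly stable if and only if $\liminf_{n\to\infty}\|T^n\| < 1$. Everything then reduces to the elementary observation that, since $\|T^n\|\Ge 0$, the condition $\liminf_{n\to\infty}\|T^n\|\Leq 1$ splits into the two exhaustive alternatives $\liminf_{n\to\infty}\|T^n\| < 1$ and $\liminf_{n\to\infty}\|T^n\| = 1$. Matching the first of these, via the Lemma, with uniform stability is what produces condition (ii).

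For the implication (i)$\Rightarrow$(ii), I would assume $\liminf_{n\to\infty}\|T^n\|\Leq 1$ and distinguish the two cases just described. In the case $\liminf_{n\to\infty}\|T^n\| < 1$, the implication (ii)$\Rightarrow$(i) of Lemma~\ref{limyng} gives that $T$ is uniformly stable, which is the first alternative of (ii); in the remaining case $\liminf_{n\to\infty}\|T^n\| = 1$ we are already in the second alternative of (ii). Either way (ii) holds.

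For the converse (ii)$\Rightarrow$(i), I would again treat the two alternatives of (ii) separately. If $T$ is uniformly stable, then the implication (i)$\Rightarrow$(ii) of Lemma~\ref{limyng} yields $\liminf_{n\to\infty}\|T^n\| < 1$, hence a fortiori $\liminf_{n\to\infty}\|T^n\|\Leq 1$; if instead $\liminf_{n\to\infty}\|T^n\| = 1$, then trivially $\liminf_{n\to\infty}\|T^n\|\Leq 1$. In both cases (i) follows.

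Since each step is either an invocation of the cited equivalence from Lemma~\ref{limyng} or an elementary comparison of nonnegative real numbers, I do not anticipate any genuine obstacle; the only point requiring care is to keep the two cases exhaustive and to call on the correct direction of Lemma~\ref{limyng} in each case. (One may note in passing that the two alternatives in (ii) are mutually exclusive, since uniform stability forces $\liminf_{n\to\infty}\|T^n\| < 1$, but this is not needed for the argument.)
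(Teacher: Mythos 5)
Your proposal is correct and coincides with the paper's intended argument: the paper states this corollary without proof, treating it as an immediate consequence of Lemma~\ref{limyng}, and your case analysis (splitting $\liminf_{n\to\infty}\|T^n\|\Leq 1$ into the alternatives $<1$ and $=1$, then invoking the equivalence of uniform stability with $\liminf_{n\to\infty}\|T^n\|<1$) is exactly the reasoning being left to the reader. No gaps; nothing further is needed.
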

As shown in an example below, the converse of the
implication
   \begin{align*}
\liminf_{n\to \infty}\|T^n\| = 1 \implies r(T)=1
   \end{align*}
appearing in the ``moreover'' part of
Lemma~\ref{limyng} does not hold in general.
   \begin{example}
Let $T\in \B(\H)$ be a {\em $2$-isometry} on a Hilbert
space $\H$, that is, $I-2T^*T + T^{*2}T^{2}=0$. It
follows from \cite[Proposition~4.5]{Jab02} that there
exists a positive operator $C\in \B(\H)$ such that
   \begin{align} \label{iplnc}
\text{$T^{*n}T^n = I + n C$ for all integers $n\Ge
0$.}
   \end{align}
Using the spectral radius formula, we deduce that
$r(T)=1$ (in fact, this is a consequence of a more
general fact, see \cite[Lemma~1.21]{Ag-St95-6}).
Assume that $T$ is not an isometry, that is, $C \neq
0$. Then, by \eqref{iplnc}, $\lim_{n\to\infty}
\|T^n\|=\infty$. To have an example of a non-isometric
$2$-isometry, consider the weighted shift $T$ with
weights $\{\sigma_n(\lambda)\}_{n=0}^{\infty}$ defined
by
   \begin{align*}
\sigma_n(\lambda) = \sqrt{\frac{1 +
(n+1)(\lambda^2-1)} {1 + n(\lambda^2-1)}}, \quad n \Ge
0,
   \end{align*}
where $\lambda \in (1,\infty)$ (see, e.g.,
\cite[Lemma~6.1]{Jab-St01}).
   \hfill $\diamondsuit$
   \end{example}
Next, we need the following characterization of
orthogonal projections.
   \begin{lemma}[{\cite[Lemma]{Fur-Nak71}}] \label{prichyr}
Let $\H$ be a Hilbert space and $P\in \B(\H)$ be an
idempotent. Then the following conditions are
equivalent{\em :}
   \begin{enumerate}
   \item[(i)] $P$ is an orthogonal projection,
   \item[(ii)] $|\is{Px}{x}| \Leq \|x\|^2$ for all $x\in
\H$, or, equivalently, $w(P) \Leq 1$.
   \end{enumerate}
   \end{lemma}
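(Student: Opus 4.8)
The plan is to treat (i)$\Rightarrow$(ii) as a one-line verification and to concentrate on (ii)$\Rightarrow$(i), which I will establish in contrapositive form. For (i)$\Rightarrow$(ii), if $P$ is an orthogonal projection then $\is{Px}{x}=\is{P^2x}{x}=\|Px\|^2$, which lies in $[0,\|x\|^2]$, so $|\is{Px}{x}|\Leq\|x\|^2$ and hence $w(P)\Leq1$. The real content is therefore the assertion that an idempotent $P$ with $P\ne P^*$ must satisfy $w(P)>1$.

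First I would record the structural facts about idempotents from the bulleted remarks preceding Theorem~\ref{T.1}: for idempotent $P$ one has $\R(I-P)=\N(P)$, the algebraic splitting $\H=\R(P)\dotplus\N(P)$ always holds, and (by the third bullet) this splitting is orthogonal exactly when $P$ is selfadjoint. Thus, if $P$ is not selfadjoint, $\R(P)$ and $\N(P)$ fail to be orthogonal, so I can choose unit vectors $u\in\R(P)$ and $v\in\N(P)$ with $\is{u}{v}\ne0$. Since $\R(P)\cap\N(P)=\0$, the vectors $u,v$ are linearly independent, whence $\M:=\mathrm{span}\{u,v\}$ is two-dimensional; and because $Pu=u$ and $Pv=0$, the subspace $\M$ is invariant for $P$. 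As $\is{Px}{x}$ does not depend on whether a vector $x\in\M$ is viewed in $\M$ or in $\H$, the numerical range of $P|_{\M}$ is contained in that of $P$, so $w(P)\Geq w(P|_{\M})$, and it suffices to exhibit a unit vector of $\M$ at which $|\is{Px}{x}|>1$.

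The remaining step is an explicit two-dimensional computation. After multiplying $v$ by a unimodular scalar I may assume $\is{u}{v}=c$ with $0<c<1$, the strict inequality coming from the Cauchy--Schwarz bound together with the linear independence of $u,v$. Setting $e_1=u$ and choosing $e_2$ so that $\{e_1,e_2\}$ is an orthonormal basis of $\M$, I get $v=ce_1+se_2$ with $s=\sqrt{1-c^2}\in(0,1)$; then $Pe_1=e_1$ and $Pv=0$ force $Pe_2=-\tfrac{c}{s}e_1$, so relative to $\{e_1,e_2\}$ the operator $P|_{\M}$ is represented by $\bigl(\begin{smallmatrix}1&-a\\0&0\end{smallmatrix}\bigr)$ with $a:=c/s>0$. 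Evaluating at the real unit vectors $x=\cos t\,e_1+\sin t\,e_2$ gives $\is{Px}{x}=\cos^2t+a\cos t\sin t=\tfrac12+\tfrac12(\cos2t+a\sin2t)$, whose maximum over $t$ is $\tfrac12+\tfrac12\sqrt{1+a^2}>1$. This yields $w(P)\Geq w(P|_{\M})>1$, contradicting (ii); hence (ii) forces $P=P^*$, and a selfadjoint idempotent is an orthogonal projection, giving (i).

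I expect the reduction step to be the only delicate point: the inequality $w(P)\Geq w(P|_{\M})$ genuinely requires $\M$ to be $P$-invariant rather than an arbitrary subspace, which is precisely why I build $\M$ from a fixed point $u$ of $P$ and a null vector $v$, and why identifying the non-orthogonality of $\R(P)$ and $\N(P)$ as the exact failure of selfadjointness is the crux. Once $\M$ is in hand the $2\times2$ optimization is routine and could equally be read off from the elliptical range theorem, the numerical range of $\bigl(\begin{smallmatrix}1&-a\\0&0\end{smallmatrix}\bigr)$ being the elliptical disk with foci $0$ and $1$.
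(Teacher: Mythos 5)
Your proof is correct, but there is nothing in the paper to compare it against: the paper states this lemma as a quotation from Furuta--Nakamoto \cite{Fur-Nak71} and gives no proof of its own. Your argument is a valid, self-contained, elementary substitute. The implication (i)$\Rightarrow$(ii) is the standard computation $\is{Px}{x}=\|Px\|^2\Leq\|x\|^2$, and for (ii)$\Rightarrow$(i) you correctly exploit the algebraic splitting $\H=\R(P)\dotplus\N(P)$ together with the fact (the paper's own bullet point) that this splitting is orthogonal exactly when $P=P^*$: non-selfadjointness yields unit vectors $u\in\R(P)$, $v\in\N(P)$ with $\is{u}{v}\neq 0$, the span $\M$ of which is $P$-invariant, and on $\M$ the operator is the $2\times 2$ idempotent $\bigl(\begin{smallmatrix}1&-a\\0&0\end{smallmatrix}\bigr)$ with $a=c/\sqrt{1-c^2}>0$, whose numerical radius $\tfrac12+\tfrac12\sqrt{1+a^2}$ exceeds $1$. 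Two small remarks, neither affecting correctness. First, with your matrix the value at $x=\cos t\,e_1+\sin t\,e_2$ is $\is{Px}{x}=\cos^2 t-a\cos t\sin t$, not $+a\cos t\sin t$; the sign is immaterial since replacing $t$ by $-t$ flips it, so the supremum $\tfrac12+\tfrac12\sqrt{1+a^2}$ is unchanged. Second, your closing claim that the inequality $w(P)\Geq w(P|_{\M})$ ``genuinely requires'' invariance of $\M$ is overstated: for any subspace $\M$ and any $x\in\M$ one has $\is{Q_{\M}Px}{x}=\is{Px}{x}$, where $Q_{\M}$ is the orthogonal projection onto $\M$, so the numerical range of the compression of $P$ to any subspace lies in that of $P$; invariance is needed only so that the compression coincides with the restriction whose matrix you compute. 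As you note, the final optimization could also be read off from the elliptical range theorem, which identifies the numerical range of your $2\times2$ matrix as the closed elliptical disk with foci $0,1$ and minor axis $a$.
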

   Now we provide yet another criterion for the limit
of a weakly convergent power sequence of an operator
to be an orthogonal projection.
   \begin{theorem} \label{L.1}
Suppose that $T, P\in \B(\H)$ are such that $T^n\wconv
P$ as $n\to\infty$ and
   \begin{align} \label{lbnunf}
\liminf_{n\to \infty}\|T^n\|\Leq 1.
   \end{align}
Then $P$ is the orthogonal projection of $\H$ onto
$\N(I-T)$, $\N(I-T)$ reduces $T$ and
$\N(I-T)=\N(I-T^*)$.
   \end{theorem}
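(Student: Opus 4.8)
The plan is to reduce the statement to Theorem~\ref{pytxly} by showing that the hypothesis \eqref{lbnunf} forces $P$ to be an orthogonal projection, after which all three conclusions follow at once. Since we already know from Theorem~\ref{T.1}(i) that $P$ is a projection (an idempotent) with $\R(P)=\N(I-T)$, the task is precisely to upgrade ``idempotent'' to ``orthogonal projection.'' The natural tool is Lemma~\ref{prichyr}: it suffices to verify that $w(P)\Leq 1$, i.e.\ that $|\is{Px}{x}|\Leq \|x\|^2$ for every $x\in\H$.

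First I would fix $x\in\H$ and exploit weak convergence to write $\is{Px}{x}=\lim_{n\to\infty}\is{T^nx}{x}$. By Cauchy--Schwarz, $|\is{T^nx}{x}|\Leq \|T^nx\|\,\|x\|\Leq \|T^n\|\,\|x\|^2$ for each $n$. The subtlety is that the full sequence $\{\|T^n\|\}$ need not converge to a quantity $\Leq 1$; only its limit inferior is controlled by \eqref{lbnunf}. I would therefore pass to a subsequence $\{k_n\}$ along which $\|T^{k_n}\|\to \liminf_{n\to\infty}\|T^n\|\Leq 1$. Because the \emph{whole} scalar sequence $\{\is{T^nx}{x}\}$ already converges to $\is{Px}{x}$ (weak convergence of $\{T^n\}$ gives this), the same limit is obtained along the subsequence, so
   \begin{align*}
|\is{Px}{x}| = \lim_{n\to\infty}|\is{T^{k_n}x}{x}| \Leq \Big(\liminf_{n\to\infty}\|T^n\|\Big)\|x\|^2 \Leq \|x\|^2.
   \end{align*}
This gives $w(P)\Leq 1$, and Lemma~\ref{prichyr} yields that the idempotent $P$ is an orthogonal projection.

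Once $P$ is known to be an orthogonal projection, I would simply invoke Theorem~\ref{pytxly}: its hypotheses ($T^n\wconv P$ and $P$ an orthogonal projection) are met, so its equivalence (i)$\Leftrightarrow$(ii)$\Leftrightarrow$(iii) delivers $\N(I-T)=\N(I-T^*)$ and that $\N(I-T)$ reduces $T$, while the final clause of that theorem identifies $P$ as the orthogonal projection of $\H$ onto $\N(I-T)$. This closes every assertion of the statement.

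The main obstacle is the bookkeeping around the limit inferior: one must be careful that the passage to a norm-minimizing subsequence does not disturb the value of the limit of $\{\is{T^nx}{x}\}$. The point to emphasize is that weak convergence of $\{T^n\}$ guarantees the scalar sequence converges along \emph{every} subsequence to the same limit $\is{Px}{x}$, so we are free to evaluate the bound along whichever subsequence makes $\|T^{k_n}\|$ tend to $\liminf_{n\to\infty}\|T^n\|$. Apart from this, the argument is a short chain of estimates; the genuine work has already been done in Theorem~\ref{pytxly} and Lemma~\ref{prichyr}.
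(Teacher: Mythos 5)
Your proof is correct and follows essentially the same route as the paper: apply Theorem~\ref{T.1}(i) to get that $P$ is an idempotent with $\R(P)=\N(I-T)$, bound $|\is{Px}{x}|$ by $\liminf_{n\to\infty}\|T^n\|\,\|x\|^2\Leq\|x\|^2$, invoke Lemma~\ref{prichyr} to conclude $P$ is an orthogonal projection, and finish with Theorem~\ref{pytxly}. Your subsequence bookkeeping just makes explicit the elementary fact (used silently in the paper's one-line estimate) that a convergent scalar sequence has the same limit along the norm-minimizing subsequence.
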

   \begin{proof}
By Theorem~\ref{T.1}(i), $P=P^2$ and $\R(P)=\N(I-T)$.
Since
   \begin{align*}
|\<Px,x\>|=\lim_{n\to\infty} |\<T^nx,x\>| \Leq
\liminf_{n\to \infty}\|T^n\| \|x\|^2 \Leq \|x\|^2,
\quad x\in \H,
   \end{align*}
we deduce from Lemma~\ref{prichyr} that $P$ is the
orthogonal projection of $\H$ onto $\N(I-T)$. The
remaining part of the conclusion is a direct
consequence of Theorem~\ref{pytxly}.
   \end{proof}
   \begin{remark} \label{mynied}
Arguing as in the above proof, we see that
Theorem~\ref{L.1} remains valid if the assumption
\eqref{lbnunf} is replaced by
   \begin{align} \label{lynf}
\liminf_{n\to \infty}\|T^n x\|\Leq \|x\|, \quad x\in
\H.
   \end{align}
Repeating the above reasoning again, we deduce that
Theorem~\ref{L.1} is also true if the assumption
\eqref{lbnunf} is replaced by
   \begin{align} \label{nynrsag}
\liminf_{n\to \infty} w(T^n)\Leq 1.
   \end{align}
In turn, using the Berger inequality $w(T^n) \Leq
w(T)^n$, which holds for all nonnegative integers $n$
(see \cite[Problem~221]{Hal82}), we conclude that the
inequality
   \begin{align} \label{byrgura}
w(T)\Leq 1
   \end{align}
implies \eqref{nynrsag}, which gives yet another
version of Theorem~\ref{L.1}. It is obvious that
\eqref{lbnunf} implies both \eqref{lynf} and
\eqref{nynrsag}. Note also that any contraction $T$
satisfies conditions \eqref{lbnunf}-\eqref{byrgura},
and that for any $T$ with $w(T)\Leq 1$, $\N(I-T)$
reduces $T$ and $\N(I-T)=\N(I-T^*)$ (the second fact
is well known, although not easy to find in the
literature). Therefore, it is an open question which
of the conditions \eqref{lbnunf}-\eqref{nynrsag}
implies that the space $\N(I-T)$ reduces $T$ or/and
$\N(I-T)=\N(I-T^*)$.
   \hfill $\diamondsuit$
   \end{remark}
We conclude this section by showing that for every
$\vartheta \in (1,\infty]$, there exists an operator
$T\in \BH$ such that
   \begin{align} \label{witsg1}
&\text{$\liminf_{n\to \infty}\|T^n\| = 1$ (so $T$ is
not uniformly stable), }
   \\ \label{witsg2}
&\text{$\limsup_{n\to \infty} \|T^n\| = \vartheta$ (so
$T$ is not a contraction), }
   \\ \label{witsg3}
&\text{$T$ is weakly stable if and only if $\vartheta
< \infty$.}
   \end{align}
Certainly, by \eqref{witsg1} and the moreover part of
Lemma~\ref{limyng}, $r(T)=1$ regardless of whether
$\vartheta$ is finite or not.
   \begin{example} \label{serzcw}
Let $T\in \B(\H)$ be a weighted shift with positive
real weights $\{\lambda_n\}_{n=0}^{\infty}$ relative
to the orthonormal basis $\{e_n\}_{n=0}^{\infty}$ of
$\H$ (see Example~\ref{pvzxk}). Then
   \begin{gather} \label{tken}
T^k e_n = \lambda_n \cdots \lambda_{n+k-1} e_{n+k},
\quad k\Geq 1, \, n\Geq 0,
   \\ \label{wxka}
\|T^k\| =\sup_{n\Geq 0} \lambda_n \cdots
\lambda_{n+k-1}, \quad k\Geq 1.
   \end{gather}
Let $\{q_n\}_{n=1}^{\infty}$ be a strictly decreasing
sequence of positive real numbers such that
$\lim_{n\to \infty} q_n=1$. To determine the weights
that meet our requirements, we will use the inflation
method, which amounts to constructing the sequences
$\{x_j\}_{j=1}^{\infty}$, $\{t_j\}_{j=1}^{\infty}$ and
$\{l_j\}_{j=1}^{\infty}$ of integers greater than $2$
such that
   \begin{align} \label{kxpyq}
\textit{$t_j=s_jx_j$, $l_j > m_j:=t_j +
\sum_{i=1}^{j-1}(t_i+l_i)$ and $x_{j+1}>m_j+l_j$ for
all $j\Ge 1$,}
   \end{align}
where $\{s_j\}_{j=1}^{\infty}$ is an arbitrary
sequence of integers greater than $2$ (with convention
$\sum_{i=1}^{0}\xi_i=0$). Namely, using induction, we
can construct a sequence of finite segments $\P_j$ and
$\Q_j$, $j\Ge 1$, of the form
   \begin{align} \label{pugj}
\overset{\textsf{segment
$\P_j$}}{\overbrace{\underset{t_j=s_jx_j}
{\underbrace{\underset{x_j}{\underbrace{q_j, 1,
\ldots,1}}, \underset{x_j}{\underbrace{q_j, 1,
\ldots,1}},\ldots, \underset{x_j}{\underbrace{q_j, 1,
\ldots,1}},\underset{x_j}{\underbrace{q_j, 1, \ldots,
1, q_j^{-s_j}}}}}}}, \overset{\textsf{segment
$\Q_j$}}{\overbrace{\underset{l_j}{\underbrace{1,
\ldots\ldots\ldots\ldots, 1}}}},
   \end{align}
arranged in the order $\P_1, \Q_1, \P_2, \Q_2, \ldots,
\P_n, \Q_n, \ldots$, where the length of the $j$-th
segment $\P_j$ is $t_j$ and the length the $j$-th
segment $\Q_j$ is $l_j$; the $j$-th segment $\P_j$
itself is partitioned into $s_j-1$ segments of the
form $q_j, 1, \ldots,1$, each of length $x_j$, plus a
single segment of the form $q_j, 1, \ldots, 1,
q_j^{-s_j}$ of the same length $x_j$; finally, the
segment $\Q_j$ consists of $l_j$ units $1$. Having
done this, we define the weights
$\{\lambda_n\}_{n=0}^{\infty}$ of the weighted shift
$T$ as follows
   \begin{gather} \label{pyqu}
\lambda_0, \lambda_1, \lambda_2, \ldots = \P_1, \Q_1,
\P_2, \Q_2, \ldots, \P_n, \Q_n, \ldots.
   \end{gather}
By \eqref{kxpyq}, we have
   \begin{align} \label{xj1h}
x_{j+1} > m_j \Geq t_j = s_j x_j > x_j, \quad j\Geq 1.
   \end{align}
The number of occurrences of $q_j$ in the $j$-th
segment $\P_j$ is equal to $s_j$ (the expression
$q_j^{-s_j}$ is not counted). First, we prove that
$\liminf_{n\to \infty} \|T^n\| = 1$. Since
$\{q_n\}_{n=1}^{\infty}$ is decreasing, we deduce from
\eqref{wxka}-\eqref{xj1h} that
   \begin{align} \label{wnjkl}
\|T^n\| & =
   \begin{cases}
q_{j+1} & \text{ if } m_j \Leq n \Leq m_j + l_j \text{
with } j\Geq 1,
   \\[1ex]
q_j^{s_j} & \text{ if } m_j - x_j + 1 \Leq n < m_j
\text{ with } j\Geq 1,
   \end{cases}
   \end{align}
and
   \begin{align} \label{wnjkl9}
q_j \Leq \|T^n\| & \Leq q_j^{s_j} \quad \text{ if }
m_j -t_j + 1 \Leq n < m_j - x_j + 1 \text{ with } j
\Geq 1.
   \end{align}
By \eqref{wnjkl}, \eqref{wnjkl9} and the fact that
$q_j \searrow 1$ as $j\to \infty$, we have
   \begin{align} \label{lane}
\liminf_{n\to \infty} \|T^n\| = \lim_{j\to \infty} q_j
=1 \text{ and } \limsup_{n\to \infty} \|T^n\| =
\limsup_{j\to \infty} q_j^{s_j}.
   \end{align}
In particular, this proves \eqref{witsg1}. Write $q_j$
as $q_j = \E^{\varepsilon_j}$ with $\varepsilon_j=\log
q_j$. Then $\{\varepsilon_n\}_{n=1}^{\infty}$ is a
strictly decreasing sequence of positive real numbers
such that $\lim_{n\to \infty} \varepsilon_n =0$.
Assume that $\{s_j\}_{j=1}^{\infty}$ is strictly
increasing. Then $\lim_{j\to\infty} s_j=\infty$. We
show that for every $\vartheta \in (1,\infty]$, there
exists $\{\varepsilon_j\}_{j=1}^{\infty}$ such that
$\lim_{j\to \infty} s_j \varepsilon_j=\log \vartheta$.
For, consider two cases. If $\vartheta=\infty$, then
we set $\varepsilon_j = \frac{1}{\sqrt{s_j}}$ for
$j\Ge 1$. In turn, if $\vartheta\in (1,\infty)$, then
we set $\varepsilon_j = \frac{\log \vartheta}{s_j}$
for $j\Ge 1$. Since $q_j^{s_j} = \E^{s_j
\varepsilon_j}$, we deduce from \eqref{lane} that
\eqref{witsg2} holds.

As weak stability always implies uniform boundedness,
which by \eqref{witsg2} is equivalent to $\vartheta <
\infty$, it remains to prove that $T$ is weakly stable
provided $\vartheta < \infty$. Using \eqref{tken}, we
see that $\lim_{n\to \infty} \<T^n e_k,e_l\> = 0$ for
all integers $k,l \Geq 0$. Hence, $\lim_{n\to \infty}
\<T^n x,y\> = 0$ for all $x,y\in \X$, where $\X$
stands for the linear span of
$\{e_n\}_{n=0}^{\infty}$. Assume that $\vartheta <
\infty$. Since $\X$ is dense in $\H$ and, by
\eqref{witsg2}, $\sup_{n\Geq 1} \|T^n\| < \infty$, we
conclude that $T$ is weakly stable (see e.g.,
\cite[Lemma~1]{Kub89}), that is \eqref{witsg3} holds.
   \hfill $\diamondsuit$
   \end{example}
Theorem~\ref{L.1} and Corollary~\ref{C.1Z} naturally
hold for unitary operators. Outgrowths of such a
particularisation to unitary operators are the subject
of the next section.
\section{\label{Sec.4}Weak Convergence of Power Sequences of Unitary Operators}
Take a unitary operator $U\in\BH$ on a Hilbert space
$\H.$ Since $U$ is hyponormal, $\N(I-U)$ reduces $U$,
so $U$ decomposes relative to
$\H=\N(I-U)\oplus\N(I-U)^\perp$ as $U=I\oplus W$,
where $W$ is unitary on $\N(I-U)^\perp$ (any part of
the decomposition of $U$ may be absent). Thus, by
Corollary~\ref{C.1Z} we obtain the following.
   \begin{proposition}[\mbox{\cite[Corollary~3.7]{CJJS2}}]
\label{C3.7} Let $U\in\BH$ be a unitary operator on a
Hilbert space $\H$. Then $\N(I-U)$ reduces $U$ and $U$
decomposes as the orthogonal~sum
$$
U=I\oplus W
$$
relative to the orthogonal decomposition $\H = \N(I-U)
\oplus \N(I-U)^\perp$, where $W$ is a unitary operator
on $\N(I-U)^\perp.$ Moreover, $\{U^n\}_{n=1}^{\infty}$
is weakly convergent if and only if $W$ is weakly
stable, and if this is the case, then the weak limit
of $\{U^n\}_{n=1}^{\infty}$ is the orthogonal
projection of $\H$ onto $\N({I-U})$.
   \end{proposition}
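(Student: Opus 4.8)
The plan is to obtain this proposition as a direct specialization of Corollary~\ref{C.1Z} to the unitary setting, the only work beyond citation being to strengthen the ``hyponormal'' conclusion about the stable summand to ``unitary.'' First I would note that $U$ is normal, hence hyponormal, since $UU^*=U^*U=I$ forces $UU^*\Leq U^*U$. Corollary~\ref{C.1Z}(i) then applies verbatim and delivers, in one stroke, that $\N(I-U)$ reduces $U$ and that $U=I\oplus L$ relative to $\H=\N(I-U)\oplus\N(I-U)^\perp$, with $L=U|_{\N(I-U)^\perp}$ hyponormal on $\N(I-U)^\perp$.

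The single step that is not a literal invocation is the promotion of $L$ from hyponormal to unitary. Writing $\M=\N(I-U)^\perp$, the fact that $\M$ reduces $U$ means $\M$ is invariant under both $U$ and $U^*$, whence $(U|_{\M})^*=U^*|_{\M}$; restricting the identities $UU^*=U^*U=I$ to $\M$ then yields $LL^*=L^*L=I|_{\M}$, so $L$ is unitary. Setting $W:=L$ gives precisely the asserted decomposition. I expect this to be entirely routine---the restriction of a unitary operator to a reducing subspace is unitary---and thus there is no real obstacle in the argument; all the substance has been pushed into Corollary~\ref{C.1Z}.

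Finally I would read off the remaining assertions directly from Corollary~\ref{C.1Z}(ii): $\{U^n\}_{n=1}^{\infty}$ is weakly convergent if and only if $W$ is weakly stable, and when this holds the weak limit is the orthogonal projection of $\H$ onto $\N(I-U)$. The parenthetical comment that any part of the decomposition may be absent corresponds to the degenerate cases $\N(I-U)=\0$ (no identity part, so $U=W$) and $\N(I-U)=\H$ (so $U=I$, with the stable part absent), neither of which needs separate handling.
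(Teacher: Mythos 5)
Your proposal is correct and follows essentially the same route as the paper: the paper also notes that a unitary operator is hyponormal, so $\N(I-U)$ reduces $U$ and $U=I\oplus W$ with $W$ unitary on $\N(I-U)^\perp$ (the restriction of a unitary to a reducing subspace being unitary), and then reads off the weak-convergence equivalence and the identification of the limit projection directly from Corollary~\ref{C.1Z}. Your explicit verification that $LL^*=L^*L=I|_{\M}$ is just a spelled-out version of the step the paper treats as immediate.
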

Note that the power sequences $\{U^n\}_{n=1}^{\infty}$
and $\{W^n\}_{n=1}^{\infty}$ may not coverage at all;
for example, this is the case if $U$ is a symmetry
(i.e., a unitary involution) such as
   \begin{align*}
U= \begin{bmatrix} 1 & 0 \cr 0 & -1
\cr\end{bmatrix}=1\oplus(-1).
   \end{align*}

According to Proposition~\ref{C3.7}, the following
holds.
   \begin{align*}
   \begin{minipage}{65ex}
{\em The power sequence of a unitary operator with no
identity part is weakly convergent if and only if it
is weakly stable.}
   \end{minipage}
   \end{align*}

The weak stability of a unitary operator $U$ can be
completely characterized by the requirement that the
spectral measure of $U$ is Rajchman (see
Corollary~\ref{undsw}; see also \eqref{(3)}). Let us
also note that, since unitary operators are clearly
not strongly stable (and therefore not uniformly
stable), it follows from Corollary~\ref{C.1Z} that the
only unitary operator whose power sequence is strongly
or norm convergent is the identity operator.

Let $\lambda$ and $\mu$ be $\sigma$-finite measures on
the $\sigma$-algebra $\A_\TT$ of Borel subsets of the
unit circle $\TT$ centered at the origin of the
complex plane $\CC$. The Lebesgue decomposition
theorem implies that the measure $\mu$ has a unique
decomposition $\mu=\mu_a+\mu_s$ relative to $\lambda$,
where $\mu_a$ and $\mu_s$ are measures on $\A_\TT$
that are absolutely continuous and singular relative
to $\lambda$, respectively. In turn, the measure
$\mu_s$ has a unique decomposition
$\mu_s=\mu_{sc}+\mu_{sd}$, where $\mu_{sc}$ and
$\mu_{sd}$ are measures on $\A_\TT$ that are
continuous and discrete (i.e., pure point),
respectively (cf.\ \cite[Theorem~I.13]{Re-Si72}). Call
the measures $\mu_{sc}$ and $\mu_{sd}$ {\em
singular-continuous} and {\em singular-discrete}
relative to $\lambda$, respectively (cf.\
\cite[Proposition~7.13]{Kub15}).

A unitary operator is {\it absolutely continuous},
{\it singular-continuous}, or {\it singular-discrete}
if its spectral measure is absolutely continuous,
singular-continuous, or singular-discrete relative to
the normalized Lebesgue measure on $\A_\TT,$
respectively. By the spectral theorem, every unitary
operator $U\in \B(\H)$ on a Hilbert space $\H$
decomposes as the orthogonal sum
   \begin{align} \label{(2)}
U=U_{a}\oplus U_{sc} \oplus U_{sd}
   \end{align}
of unitary operators relative to the orthogonal
decomposition $\H=\H_a \oplus \H_{sc} \oplus \H_{sd}$
of Hilbert spaces, where $U_{a}\in \B(\H_a)$ is
absolutely continuous, $U_{sc}\in \B(\H_{sc})$ is
singular-continuous and $U_{sd}\in \B(\H_{sd})$ is
singular-discrete. Note that any part in the
decomposition \eqref{(2)} may be absent.

The weak stability of unitary operators can be
characterised in terms of the decomposition
\eqref{(2)} as follows.
   \begin{remark} \label{Rem.1}
Consider the decomposition \eqref{(2)} of a unitary
operator $U\in \B(\H)$ relative to the decomposition
$\H = \H_a \oplus \H_{sc} \oplus \H_{sd}$. The
following facts are well known (see, e.g., \cite[p.\
48]{Kub2}).
   \begin{enumerate}
   \item[(i)] {\it An absolutely continuous unitary is always weakly
stable, i.e., $U_{a}^n\wconv 0$.}
   \item[(ii)] {\it A singular-discrete unitary is never weakly
stable, i.e., $U_{sd}^n\notwconv 0$.}
   \end{enumerate}
This in turn is equivalent to the following statement.
   \begin{align} \label{eyivc}
   \begin{minipage}{71ex}
{\em A unitary operator is weakly stable if and only
if its singular-continu\-ous part is weakly stable and
its singular-discrete part is absent.}
   \end{minipage}
   \end{align}
Recall also the following two observations from
\cite[Propositions~3.2 and 3.3]{Kub2}.
   \begin{enumerate}
   \item[(iii)] {\it There exist weakly stable singular-continuous
unitary operators}.
   \item[(iv)] {\it There exist weakly unstable
singular-continuous unitary operators}.
   \end{enumerate}
Now, we provide more details.

(i$^\prime$) {\sf Absolutely continuous unitaries as
parts of bilateral shifts}. A unitary operator is
absolutely continuous if and only if it is a part of a
bilateral shift (see \cite[p.\ 56, Exercise~8]{Fil}).

(ii$^\prime$) {\sf Singular-discrete unitaries via
eigenvalues}. A complex number $\alpha$ is an
eigenvalue of a unitary operator $U$ with the spectral
measure $E$ if and only if $E(\{\alpha\})\ne 0$ (see
\cite[Theorem~12.29]{Rud}). Hence, $U$ has no
singular-discrete (equivalently, discrete) part if and
only if it has an eigenvalue.

(ii$^{\prime\prime}$) {\sf Weakly unstable
singular-discrete unitaries with full spectrum}. If
$\{\alpha_k\}_{k=1}^{\infty}$ is an enumeration of the
rationals in $[0,1)$, then the diagonal operator $U$
with diagonal $\{e^{2\pi i\alpha_k}\}_{k=1}^{\infty}$
on $\ell_+^2$ is a singular-discrete unitary whose
spectrum is the whole unit circle $\TT$, and which is
not weakly stable according to (ii) (see also
\cite[Example~13.5]{Dow}).

(iii$^\prime$) {\sf Weakly stable singular-continuous
unitaries}. Let $\mu$ be a finite measure on $\A_\TT$
and let $L^2(\TT,\mu)$ be the Hilbert space of square
integrable Borel complex functions on $\TT$ with
respect to $\mu$. Consider the unitary multiplication
operator $U_{\vphi,\mu}$ on $L^2(\TT,\mu)$, which is
defined by
$$
U_{\vphi,\mu} \psi = \vphi \cdot \psi \;\;
\textrm{a.e.\ [$\mu$]}, \quad \psi\in L^2(\TT,\mu),
$$
where $\vphi\colon \TT\to\TT$ is the identity map.
Note that the measure $\mu$ can be regarded as the
scalar spectral measure of $U_{\vphi,\mu}.$ Recall
that a finite measure $\nu$ on $\A_\TT$ is a {\em
Rajchman measure} if $\int_\TT z^k\,d\nu(z)\to 0$ as
$|k|\to\infty$ (equivalently as $k\to\infty$). For the
operator $U_{\vphi,\mu}$, we have
   \begin{align} \label{(3)}
\textit{$\mu$ is a Rajchman measure if and only if
$U_{\vphi,\mu}^{n}\wconv 0$.}
   \end{align}
(Indeed, $U_{\vphi,\mu}^{n} \wconv 0$ $\limply$
$\<U_{\vphi,\mu}^{n}1,1\>\to 0$ $\iff$ $\int_\TT
z^n\,d\mu(z) \to 0$ $\limply$
${U_{\vphi,\mu}^{n}\wconv 0}$; the proof of the last
implication is straightforward (cf.\ \cite[pp.\
1383/1384]{BM}). A Rajchman measure is always
continuous (see \cite{Ned20}, see also \cite[p.\
364]{Lyo}); however there are singular Rajchman
measures \cite[Theorem~3.4]{Lyo}. Thus a singular
Rajchman measure is singular-continuous, so, by
\eqref{(3)}, we have
   $$
   \begin{minipage}{78ex}
{\it if $\mu$ is a singular Rajchman meas\-ure, then
the multiplication operator $U_{\vphi,\mu}$ on
$L^2(\TT,\mu)$ is a weakly stable singular-continuous
unitary}.
   \end{minipage}
   $$

(iv$^\prime$) {\sf Weakly unstable singular-continuous
unitaries}. Let $\mu$ be the Borel-Stieltjes measure
on $\A_\TT$ generated by the Cantor function with
domain being the Cantor set $\varGamma$ over the unit
circle $\TT$ (cf.\ \cite[p.\ 20, Example~3]{Re-Si72}
and \cite[p.\ 128, Problem~7.15(c)]{Kub15}). Then
$\mu$ is singular-continuous, and thus the
multiplication operator $U_{\vphi,\mu}$ on
$L^2(\TT,\mu)$ is a singular-continuous unitary whose
spectrum is ${\varGamma}$, but $\mu$ is not a Rajchman
measure (see \cite[p.\ 364]{Lyo}), so by \eqref{(3)},
the operator $U_{\vphi,\mu}$ is not weakly stable.
   \phantom{} \hfill $\diamondsuit$
   \end{remark}
Now, we apply Proposition~\ref{C3.7} to the
decomposition \eqref{(2)} of a unitary operator $U$
and conclude that the power sequence
$\{U^n\}_{n=1}^{\infty}$ is weakly convergent if and
only if its singular-continuous part $U_{sc}$ is
weakly stable and $U_{sd}=I$ (see Theorem~\ref{T2}).
   \begin{lemma}  \label{lim4.3}
The power sequence of a singular-continuous unitary
operator $U$ converges weakly if and only if $U$ is
weakly stable.
   \end{lemma}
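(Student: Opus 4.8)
The plan is to reduce the statement to Proposition~\ref{C3.7} after first establishing that a singular-continuous unitary has trivial fixed space $\N(I-U)$. The ``if'' implication needs nothing: if $U$ is weakly stable then $U^n\wconv 0$ by definition, so the power sequence converges weakly. The whole content lies in the ``only if'' direction, and the triviality of the identity part of $U$ is precisely what collapses Proposition~\ref{C3.7} into the desired equivalence.

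First I would verify that $\N(I-U)=\0$. Let $E$ denote the spectral measure of $U$. Saying that $U$ is singular-continuous means that $E$ is continuous relative to normalized Lebesgue measure on $\A_\TT$, hence non-atomic, so in particular $E(\{1\})=0$. By the eigenvalue criterion recorded in Remark~\ref{Rem.1}(ii$^\prime$), the number $1$ is an eigenvalue of $U$ exactly when $E(\{1\})\ne 0$; since $E(\{1\})=0$, the number $1$ is not an eigenvalue, which is the same as $\N(I-U)=\0$.

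It then remains to invoke Proposition~\ref{C3.7}. Applied to the unitary $U$, it yields the orthogonal decomposition $U=I\oplus W$ relative to $\H=\N(I-U)\oplus\N(I-U)^\perp$, with $W$ unitary on $\N(I-U)^\perp$, together with the equivalence that $\{U^n\}_{n=1}^{\infty}$ converges weakly if and only if $W$ is weakly stable. Because $\N(I-U)=\0$, the identity summand is absent and $W=U$ acts on all of $\H$. Hence $\{U^n\}_{n=1}^{\infty}$ converges weakly if and only if $U$ is weakly stable, as claimed.

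There is no real obstacle once the triviality of $\N(I-U)$ is secured; the only point requiring care is the passage from continuity of the spectral measure to the absence of $1$ as an eigenvalue, and that is exactly the bookkeeping furnished by Remark~\ref{Rem.1}. The remainder is a direct specialization of Proposition~\ref{C3.7} to the case in which the identity part is absent.
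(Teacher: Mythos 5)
Your proposal is correct and follows essentially the same route as the paper: both decompose $U$ via Proposition~\ref{C3.7}, use Remark~\ref{Rem.1}(ii$^\prime$) to conclude that a singular-continuous unitary has no eigenvalue (so the identity part is absent and $U=W$), and then invoke the ``moreover'' part of Proposition~\ref{C3.7}. The only cosmetic difference is that you spell out the intermediate step $E(\{1\})=0$ explicitly, which the paper leaves implicit.
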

   \begin{proof}
Only the necessity part requires proof. Decompose $U$
as in Proposition~\ref{C3.7}, that is $U=I\oplus W.$
Since $U$ is singular-continuous, $U$ has no
eigenvalue (see Remark~\ref{Rem.1}(ii$^\prime$)), and
thus $U=W$. Using the ``moreover'' part of
Proposition~\ref{C3.7} completes the proof.
   \end{proof}
   \begin{lemma} \label{lim4.4}
The power sequence of a singular-discrete unitary
operator $U$ converges weakly if and only if $U$ is
the identity operator.
   \end{lemma}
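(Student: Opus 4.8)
The plan is to separate the two implications, the sufficiency being immediate and all the content lying in the necessity. If $U=I$, then $U^n=I$ for every $n$, so $\{U^n\}_{n=1}^{\infty}$ converges even in norm (hence weakly) to $I$; thus only the necessity requires proof.

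For the necessity I would follow exactly the template of the proof of Lemma~\ref{lim4.3}. Assuming that $\{U^n\}_{n=1}^{\infty}$ converges weakly, Proposition~\ref{C3.7} furnishes the orthogonal decomposition $U=I\oplus W$ relative to $\H=\N(I-U)\oplus\N(I-U)^\perp$, with $W$ unitary on $\N(I-U)^\perp$; moreover, the ``moreover'' part of that proposition guarantees that $W$ is weakly stable. The crux is then to observe that $W$, being the restriction of $U$ to the reducing subspace $\N(I-U)^\perp$, is again singular-discrete. Since, by Remark~\ref{Rem.1}(ii), a singular-discrete unitary on a nonzero space is never weakly stable, the weak stability of $W$ forces $\N(I-U)^\perp=\{0\}$, i.e.\ $W$ is absent. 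Therefore $\N(I-U)=\H$ and $U=I$.

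The only point demanding genuine care---rather than being a formal manipulation---is the claim that the part $W$ inherits the singular-discrete character of $U$. This holds because the spectral measure of the restriction of a unitary to a reducing subspace is exactly the restriction of its spectral measure (the spectral projections commute with the projection onto the reducing subspace), and here it amounts to restricting the spectral measure of $U$ to $\TT\setminus\{1\}$, which leaves a pure point, hence singular-discrete, measure. I expect this spectral-type bookkeeping to be the main, though modest, obstacle. Should one prefer to bypass Remark~\ref{Rem.1}(ii) altogether, a fully self-contained alternative is available: diagonalizing $U$ on an orthonormal basis $\{f_k\}$ of eigenvectors with eigenvalues $\alpha_k\in\TT$, weak convergence of $\{U^n\}_{n=1}^{\infty}$ forces each scalar sequence $\<U^nf_k,f_k\>=\alpha_k^n$ to converge, and for $\alpha\in\TT$ the limit $\lim_{n\to\infty}\alpha^n$ exists only when $\alpha=1$; hence every $\alpha_k=1$ and $U=I$.
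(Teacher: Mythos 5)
Your proof is correct and follows the paper's own argument essentially verbatim: decompose $U=I\oplus W$ via Proposition~\ref{C3.7}, note that $W$ inherits the singular-discrete character of $U$, and combine the ``moreover'' part of that proposition with Remark~\ref{Rem.1}(ii) to force $W$ to be absent, so $U=I$. The self-contained diagonalization alternative you sketch at the end (eigenvalues $\alpha_k\in\TT$ with $\alpha_k^n$ convergent only when $\alpha_k=1$) is also valid, but your main line of reasoning is exactly the paper's.
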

   \begin{proof}
As above, we only discuss the necessity part.
Decompose $U$ as in Proposition~\ref{C3.7}. Since $U$
is singular-discrete, so is $W$. Therefore, by the
``moreover'' part of Proposition~\ref{C3.7} and
Remark~\ref{Rem.1}(ii), $U=I$.
   \end{proof}
   \begin{theorem} \label{T2}
Let $U$ be a unitary operator on a Hilbert space $\H$
and let $U=U_a\oplus U_{sc}\oplus U_{sd}$ be the
decomposition as in \eqref{(2)}. Then
$\{U^n\}_{n=1}^{\infty}$ is weakly convergent if and
only if $U_{sc}$ is weakly stable and $U_{sd}$ is the
identity operator $($any of $U_a$, $U_{sc}$ and
$U_{sd}$ may be absent\/$)$. Moreover, if this is the
case, then the weak limit of $\{U^n\}_{n=1}^{\infty}$
is the orthogonal projection of $\H$ onto
$\N(I-U)=\H_{sd}$.
   \end{theorem}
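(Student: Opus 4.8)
The plan is to reduce everything to the three orthogonal summands of \eqref{(2)} and then invoke the three prepared facts, one for each summand. First I would extend the equivalence \eqref{(1)} from two to three orthogonal summands by applying it twice (writing $U=U_a\oplus(U_{sc}\oplus U_{sd})$ and then splitting the second factor): $\{U^n\}_{n=1}^{\infty}$ converges weakly if and only if each of $\{U_a^n\}_{n=1}^{\infty}$, $\{U_{sc}^n\}_{n=1}^{\infty}$ and $\{U_{sd}^n\}_{n=1}^{\infty}$ converges weakly. Since $U^n=U_a^n\oplus U_{sc}^n\oplus U_{sd}^n$, this is immediate.

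Next I would treat the three summands separately. By Remark~\ref{Rem.1}(i) the absolutely continuous part is always weakly stable, so $\{U_a^n\}_{n=1}^{\infty}$ converges weakly (to $0$) with no further hypothesis. By Lemma~\ref{lim4.3}, $\{U_{sc}^n\}_{n=1}^{\infty}$ converges weakly if and only if $U_{sc}$ is weakly stable. By Lemma~\ref{lim4.4}, $\{U_{sd}^n\}_{n=1}^{\infty}$ converges weakly if and only if $U_{sd}$ is the identity operator. Combining these three statements with the reduction of the previous paragraph yields precisely the asserted equivalence: $\{U^n\}_{n=1}^{\infty}$ converges weakly if and only if $U_{sc}$ is weakly stable and $U_{sd}=I$.

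For the ``moreover'' part I would argue as follows. Assuming the equivalent conditions hold, Proposition~\ref{C3.7} already guarantees that the weak limit of $\{U^n\}_{n=1}^{\infty}$ is the orthogonal projection of $\H$ onto $\N(I-U)$, so it only remains to identify $\N(I-U)=\H_{sd}$. Writing $\N(I-U)=\N(I-U_a)\oplus\N(I-U_{sc})\oplus\N(I-U_{sd})$, the hypothesis $U_{sd}=I$ gives $\N(I-U_{sd})=\H_{sd}$. For the other two summands I would use the fact (Remark~\ref{Rem.1}(ii$^\prime$)) that $1$ is an eigenvalue of a unitary precisely when its spectral measure has an atom at $1$; since the absolutely continuous and singular-continuous spectral measures are continuous (atomless), neither $U_a$ nor $U_{sc}$ has $1$ as an eigenvalue, whence $\N(I-U_a)=\N(I-U_{sc})=\0$. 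This yields $\N(I-U)=\H_{sd}$ and completes the proof. (Alternatively, one may bypass Proposition~\ref{C3.7} and compute the limit directly: $U_a^n\wconv 0$ and $U_{sc}^n\wconv 0$ by weak stability, while $U_{sd}^n\wconv I$ since $U_{sd}=I$; hence $U^n\wconv 0\oplus 0\oplus I$, the orthogonal projection of $\H$ onto $\H_{sd}$.)

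The main obstacle is not analytic: the substantive content has already been isolated in Lemmas~\ref{lim4.3} and~\ref{lim4.4} and in Remark~\ref{Rem.1}, so the theorem is essentially an assembly of these pieces via \eqref{(1)}. The only point requiring genuine care is the identification $\N(I-U)=\H_{sd}$, i.e.\ checking that the absolutely continuous and singular-continuous parts contribute no eigenvector at $1$; this rests on the atomlessness of continuous spectral measures together with the correspondence between eigenvalues of a unitary and atoms of its spectral measure.
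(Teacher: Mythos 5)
Your proposal is correct and follows essentially the same route as the paper: the reduction via \eqref{(1)} to the three orthogonal summands, Remark~\ref{Rem.1}(i) for $U_a$, Lemmas~\ref{lim4.3} and~\ref{lim4.4} for $U_{sc}$ and $U_{sd}$, and Proposition~\ref{C3.7} for the ``moreover'' part. Your explicit verification that $\N(I-U)=\H_{sd}$ (via atomlessness of the continuous spectral parts) is a detail the paper leaves implicit, but it is not a different method.
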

   \begin{proof}
By \eqref{(1)}, $\{U^n\}_{n=1}^{\infty}$ converges
weakly if and only if every sequence
$\{U_a^{n}\}_{n=1}^{\infty}$,
$\{U_{sc}^{n}\}_{n=1}^{\infty}$ and
$\{U_{sd}^{n}\}_{n=1}^{\infty}$ converges weakly. It
follows from Remark~\ref{Rem.1}(i) that $U_a^{n}
\wconv 0$ as $n\to\infty$. Now, using
Lemmas~\ref{lim4.3} and \ref{lim4.4}, we can prove the
``if and only if'' part. The ``moreover'' part follows
from the ``if and only if'' part and
Proposition~\ref{C3.7}.
   \end{proof}
Observe that Theorem~\ref{T2} extends the weak
stability criterion \eqref{eyivc}.
\section{\label{Sec.5}The case of subnormal operators}
In this section, we characterize the weak, strong and
uniform stability of subnormal operators in terms of
their semispectral measures. As a consequence, we
obtain results on the convergence of the power
sequence of a subnormal operator with respect to the
weak, strong and norm topologies.

We begin by reviewing the concepts of the semispectral
integral and the semispectral measure of a subnormal
operator. Suppose that $F\colon \acal\to \B(\H)$ is a
semispectral measure on a $\sigma$-algebra $\acal$ of
subsets of a set $\varOmega$, i.e.,
$\mu_x=\<F(\cdot)x,x\>$ is a positive measure for
every $x\in\H$, and $F(\varOmega)=I.$ Also recall that
if $\varphi\in\bigcap_{x\in\H}L^1(\mu_x)$, then there
exists a unique operator in $\B(\H)$, denoted by
$\int_\varOmega\varphi(\omega)\,F(\D \omega)$, such
that
   \begin{align} \label{(3.5)}
\Big\<\int_\varOmega\varphi(\omega)\,F(\D
\omega)x,x\Big\> =\int_\varOmega\varphi(\omega)\<F(\D
\omega)x,x\>, \quad x\in\H.
   \end{align}
The same applies to positive operator valued measures.

If $T\in \B(\H)$ is a subnormal operator and $E$ is
the spectral measure of a minimal normal extension
$N\in\B(\K)$ of $T$, then the Borel semispectral
measure $F$ on $\CC$ with values in $\B(\H)$ defined
by
   \begin{align} \label{(3.8)}
F(\varDelta)=P_\H E(\varDelta)|_\H, \quad
\varDelta\text{ - Borel subset of }\CC,
   \end{align}
where $P_{\H}\in \B(\K)$ is the orthogonal projection
of $\K$ onto $\H$, is called the {\it semispectral
measure}\/ of $T.$ The definition of $F$ is
independent of the choice of a minimal normal
extension of $T$ and $F$ is a unique representing
semispectral measure of the operator valued complex
moment sequence $\{T^{*n}T^m\}_{m,n=0}^{\infty}$,
i.e.,
   \begin{align} \label{(4)}
T^{*n}T^m=\int_{\CC}z^m \bar z^n F(\D z), \quad m,n\Ge 0.
   \end{align}
The closed support of $F$ is equal to $\sigma(N).$ See
\cite[Appendix]{St92} and \cite[Section~3]{Ju-St08} (see also
\cite{Bi-So87,Co91}) for more information.

If $F$ is a $\B(\H)$-valued Borel semispectral measure
on $\CC$, then $F_{\TT}$ denotes the positive operator
valued measure defined as $F_{\TT}(\varDelta) =
F(\varDelta \cap \TT)$ for any Borel subset of $\CC$.

We begin by discussing the weak stability of subnormal
operators.
   \begin{proposition} \label{wykstu}
Let $T\in \B(\H)$ be a subnormal operator with the
semispectral measure $F$. Then $T$ is weakly stable if
and only if $\|T\| \Leq 1$ and $\<F_{\TT}(\cdot)x,x\>$
is a Rajchman measure for every $x\in \H$, that is,
   \begin{align}  \label{ntift}
\lim_{n\to \infty} \int_{\TT} z^n \<F(\D z)x,x\> = 0,
\quad x\in \H.
   \end{align}
   \end{proposition}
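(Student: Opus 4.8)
The plan is to characterize weak stability of a subnormal operator $T$ through the behavior of its semispectral measure $F$. I will exploit the structure provided by a minimal normal extension: let $N\in\B(\K)$ be such an extension with spectral measure $E$, so that $F(\varDelta)=P_\H E(\varDelta)|_\H$. The key identity will be $\is{T^nx}{y} = \is{N^nx}{y}$ for $x,y\in\H$ (since $\H$ is invariant for $N$ and $N|_\H=T$), which lets me transfer questions about powers of $T$ to powers of the normal operator $N$, where the spectral calculus is available. Concretely, $\is{T^nx}{x}=\int_\CC z^n\is{E(\D z)x,x}=\int_\CC z^n\,\mu_x(\D z)$, where $\mu_x=\is{E(\cdot)x}{x}=\is{F(\cdot)x}{x}$ is a finite positive Borel measure on $\sigma(N)\subseteq\CC$.

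\textbf{The necessity direction.}
Suppose $T$ is weakly stable. Since $T$ is subnormal, it is hyponormal and hence normaloid by \eqref{shnyr}. Weak stability forces weak convergence of $\{T^n\}_{n=1}^\infty$, so by \eqref{gwpi} the operator $T$ is a contraction, giving $\|T\|\Leq 1$. This means $\sigma(N)\subseteq\overline{\dbb}$, so each measure $\mu_x$ lives on the closed unit disc. Now weak stability gives $\int_\CC z^n\,\mu_x(\D z)=\is{T^nx}{x}\to 0$. The obstacle is to deduce from this that the portion of $\mu_x$ carried on $\TT$ is Rajchman. The idea is to split $\mu_x = \mu_x|_{\dbb} + \mu_x|_{\TT}$ and observe that $\int_{\dbb} z^n\,\mu_x(\D z)\to 0$ automatically by dominated convergence (since $|z^n|\le 1$ and $z^n\to 0$ pointwise on $\dbb$). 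Subtracting, we get $\int_{\TT} z^n\is{F(\D z)x,x}=\int_\TT z^n\,\mu_x(\D z)\to 0$, which is exactly \eqref{ntift}, i.e.\ $\is{F_\TT(\cdot)x}{x}$ is Rajchman.

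\textbf{The sufficiency direction.}
Conversely, assume $\|T\|\Leq 1$ and \eqref{ntift} holds for every $x\in\H$. Then $\|N\|=\|T\|\Leq1$ as well, so again $\mu_x$ is supported in $\overline{\dbb}$. Reversing the previous computation, for each $x\in\H$ the disc part gives $\int_{\dbb}z^n\,\mu_x(\D z)\to 0$ by dominated convergence, while the circle part gives $\int_\TT z^n\,\mu_x(\D z)\to 0$ by the Rajchman hypothesis \eqref{ntift}. Summing, $\is{T^nx}{x}=\int_\CC z^n\,\mu_x(\D z)\to 0$ for every $x$. Finally I would upgrade this diagonal ($x=y$) statement to full weak convergence $\is{T^nx}{y}\to 0$ by polarization: since $T$ is power bounded ($\|T^n\|\Leq\|T\|^n\Leq 1$), the sesquilinear forms $(x,y)\mapsto\is{T^nx}{y}$ are uniformly bounded, so convergence to $0$ on the diagonal extends to all pairs. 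This yields weak stability.

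\textbf{Main obstacle.}
The delicate point is the clean separation of the disc and circle contributions and the justification that only the boundary part can obstruct weak stability; the interior of the disc always contributes a null limit. Once this split is isolated, both directions become symmetric applications of dominated convergence together with the definition of the Rajchman condition, and the reduction $\is{T^nx}{x}=\int_\CC z^n\is{F(\D z)x,x}$ coming from \eqref{(4)} (with $m=n$, $n=0$, read along the normal extension) does the rest. I would keep the use of \eqref{gwpi} explicit, as it is what upgrades the weak-convergence consequence of stability to the contractivity bound $\|T\|\Leq1$ in the necessity half.
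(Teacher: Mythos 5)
Your proof is correct and takes essentially the same route as the paper's: both reduce $\<T^nx,x\>$ to an integral of $z^n$ against $\<F(\cdot)x,x\>$ (the paper via its moment identity \eqref{(4)}, you via the minimal normal extension from which \eqref{(4)} is derived), note that contractivity follows from \eqref{gwpi} and \eqref{shnyr}, split the integral over $\DD\sqcup\TT$, kill the disc part by dominated convergence, and identify weak stability with the Rajchman condition on the circle part. Your explicit polarization step upgrading the diagonal convergence $\<T^nx,x\>\to 0$ to full weak stability is the only detail the paper leaves implicit.
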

   \begin{proof}
If $T$ is weakly stable, then by \eqref{gwpi} and
\eqref{shnyr}, $\|T\| \Leq 1$. Hence, $\sigma(T)
\subseteq \DD \sqcup\TT$ and thus, by \eqref{(3.5)}
and \eqref{(4)}, the following identity holds
   \begin{align} \label{tnhh}
\<T^n x,x\> = \int_{\DD} z^n \<F(\D z)x,x\> +
\int_{\TT} z^n \<F(\D z)x,x\>, \quad x\in \H, \, n
\Geq 0.
   \end{align}
By the Lebesgue dominated convergence theorem, the first
summand in \eqref{tnhh} tends to zero as $n\to \infty$.
Therefore, $T$ is weakly stable if and only if \eqref{ntift}
holds.
   \end{proof}
The following result is a direct consequence of
Proposition~\ref{wykstu} and the obvious observation
that semispectral measures of normal (and therefore
unitary) operators are spectral. It can also be
derived from \eqref{(3)} and two facts, the first of
which says that any unitary operator is an orthogonal
sum (of arbitrary cardinality) of unitary
multiplication operators, as in
Remark~\ref{Rem.1}(iii$^\prime$), and the second
states that a finite Borel measure on $\TT$, which is
the sum of a series of any (not necessarily countable)
family of Rajchman measures, is a Rajchman measure.
   \begin{corollary} \label{undsw}
A unitary operator $U\in \B(\H)$ with the spectral
measure $E$ is weakly stable if and only if
$\<E(\cdot)x,x\>$ is a Rajchman measure for every
$x\in \H$.
   \end{corollary}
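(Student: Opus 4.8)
The plan is to derive Corollary~\ref{undsw} as a special case of Proposition~\ref{wykstu}. A unitary operator $U$ is normal, hence subnormal, so Proposition~\ref{wykstu} applies directly. The key observation is that for a unitary (indeed, any normal) operator the semispectral measure $F$ coincides with the spectral measure $E$: since $U$ is its own minimal normal extension, the compression in \eqref{(3.8)} is trivial and $F=E$. Moreover, because $U$ is unitary, $\sigma(U)\subseteq\TT$, so $E$ is supported on the unit circle and $E_{\TT}=E$. In particular $\|U\|=1\Leq 1$ automatically, so the norm condition in Proposition~\ref{wykstu} is vacuously satisfied and drops out of the characterization.

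First I would invoke Proposition~\ref{wykstu} with $T=U$ and $F=E$. The weak stability of $U$ is then equivalent to the single condition that $\<E(\cdot)x,x\>$ be a Rajchman measure for every $x\in\H$, since the contraction hypothesis $\|U\|\Leq 1$ holds trivially and the restriction $E_{\TT}=E$ makes \eqref{ntift} read exactly as $\lim_{n\to\infty}\int_{\TT}z^n\<E(\D z)x,x\>=0$ for all $x$. This is precisely the assertion of the corollary.

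Alternatively, as the surrounding text suggests, one can give a self-contained derivation from \eqref{(3)}. The idea here is to decompose $U$ as an orthogonal sum of unitary multiplication operators $U_{\vphi,\mu_x}$ of the type in Remark~\ref{Rem.1}(iii$^\prime$), using the fact that every unitary operator is unitarily equivalent to such a direct sum over a family of scalar spectral measures. For each summand, \eqref{(3)} characterizes weak stability by the Rajchman property of the corresponding measure, and weak stability of the orthogonal sum is governed by \eqref{(1)}. One then uses the stated fact that a finite Borel measure on $\TT$ that is a sum of a (possibly uncountable) family of Rajchman measures is again Rajchman, to reassemble these into the single condition that each $\<E(\cdot)x,x\>$ be Rajchman.

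I expect the proof to be essentially immediate once the identification $F=E$ and $E_{\TT}=E$ is made, so there is no serious obstacle; the only point requiring any care is confirming that the semispectral measure of a unitary operator is genuinely spectral, which follows because a unitary operator is a minimal normal extension of itself and hence $P_\H E(\varDelta)|_\H = E(\varDelta)$. The more delicate ingredient, if one pursues the second route, is the measure-theoretic lemma that an uncountable sum of Rajchman measures summing to a finite measure remains Rajchman, but the excerpt explicitly permits this fact to be assumed.
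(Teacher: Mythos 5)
Your proposal is correct and takes essentially the same route as the paper: the paper derives Corollary~\ref{undsw} exactly as a direct consequence of Proposition~\ref{wykstu} together with the observation that the semispectral measure of a normal (hence unitary) operator is its spectral measure, and it likewise notes the alternative derivation via \eqref{(3)} and orthogonal sums of multiplication operators that you sketch as your second route. No gaps to report.
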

Since absolutely continuous measures with respect to
the normalized Lebesgue measure on $\A_\TT$ are
Rajchman measures (see \cite[p.\ 364]{Lyo}), we get.
   \begin{corollary}
Let $T\in \B(\H)$ be a subnormal contraction with the
semispectral measure $F$. If $F_{\TT}$ is absolutely
continuous with respect to the normalized Lebesgue
measure on $\A_\TT$, then $T$ is weakly stable. In
particular, this is the case if $F_{\TT}=0$.
   \end{corollary}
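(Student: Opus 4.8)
The plan is to reduce everything to the Rajchman criterion established in Proposition~\ref{wykstu}. Since $T$ is assumed to be a subnormal contraction, the condition $\|T\|\Leq 1$ required by that proposition holds automatically, so the only thing left to verify is \eqref{ntift}, namely that $\int_{\TT} z^n\<F(\D z)x,x\>\to 0$ as $n\to\infty$ for every $x\in\H$.

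First I would unwind the meaning of the hypothesis. Writing $m$ for the normalized Lebesgue measure on $\A_\TT$, the absolute continuity of the positive operator valued measure $F_\TT$ with respect to $m$ means that $F_\TT(\varDelta)=0$ whenever $m(\varDelta)=0$. Fixing $x\in\H$ and testing against $x$, this immediately gives $\<F_\TT(\varDelta)x,x\>=0$ whenever $m(\varDelta)=0$; in other words, the finite positive scalar measure $\mu_x:=\<F_\TT(\cdot)x,x\>$ is absolutely continuous with respect to $m$. This transfer of absolute continuity from the operator valued measure to the scalar measures $\mu_x$ is the only genuine content of the argument.

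Next I would appeal to the fact quoted immediately before the statement, namely that measures absolutely continuous with respect to $m$ are Rajchman measures (see \cite[p.\ 364]{Lyo}). Hence each $\mu_x$ is a Rajchman measure, so $\int_\TT z^n\,\mu_x(\D z)\to 0$ as $n\to\infty$. Since $F_\TT(\varDelta)=F(\varDelta\cap\TT)$, we have $\int_{\TT} z^n\<F(\D z)x,x\>=\int_\TT z^n\,\mu_x(\D z)$, so \eqref{ntift} holds for every $x\in\H$, and Proposition~\ref{wykstu} yields the weak stability of $T$.

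For the ``in particular'' part I would simply observe that $F_\TT=0$ makes every $\mu_x$ the zero measure, which is trivially absolutely continuous with respect to $m$ and for which the integrals in \eqref{ntift} vanish identically, so the same conclusion applies. There is essentially no obstacle here; the only point requiring a word of care is the reading of ``absolutely continuous'' for the operator valued measure $F_\TT$, which I interpret as $m(\varDelta)=0\Rightarrow F_\TT(\varDelta)=0$, after which the cited Rajchman fact together with Proposition~\ref{wykstu} finishes the proof.
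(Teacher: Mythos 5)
Your proof is correct and is essentially the argument the paper intends: the corollary is stated there as an immediate consequence of Proposition~\ref{wykstu} together with the cited fact \cite[p.\ 364]{Lyo} that measures absolutely continuous with respect to normalized Lebesgue measure on $\A_\TT$ are Rajchman. Your only addition—making explicit that absolute continuity of the operator-valued measure $F_\TT$ passes to the scalar measures $\<F_\TT(\cdot)x,x\>$—is exactly the routine verification the paper leaves to the reader.
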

Using Corollary~\ref{C.1Z} and
Proposition~\ref{wykstu} (see also \eqref{(1)}), we
can characterize the weak convergence of the power
sequence of a subnormal operator as follows.
   \begin{proposition} \label{traq}
Let $T\in \B(\H)$ be a subnormal operator. Then
$\{T^n\}_{n=1}^{\infty}$ is weakly convergent if and
only if $T$ decomposes as $T=I\oplus L$ with respect
to a decomposition $\H=\H_1 \oplus \H_2$, where $L$ is
a subnormal contraction with the semispectral measure
$G$ such that $\<G_{\TT}(\cdot)x,x\>$ is a Rajchman
measure for every $x\in \H_2.$ If this is the case,
then the weak limit of $\{T^n\}_{n=1}^{\infty}$ is the
orthogonal projection of $\H$ onto $\N(I-T)$.
Moreover, the orthogonal decomposition of $T$ is
unique and $\H_1=\N(I-T).$
   \end{proposition}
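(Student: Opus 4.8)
The plan is to reduce everything to the two quoted results Corollary~\ref{C.1Z} and Proposition~\ref{wykstu}, using that a subnormal operator is hyponormal and that restrictions of subnormal operators to invariant subspaces are again subnormal.

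First I would observe that, by \eqref{shnyr}, the subnormal operator $T$ is hyponormal, so Corollary~\ref{C.1Z}(i) applies: $\N(I-T)$ reduces $T$ and $T=I\oplus L$ relative to the orthogonal decomposition $\H=\N(I-T)\oplus\N(I-T)^\perp$, with $L$ hyponormal on $\H_2:=\N(I-T)^\perp$. Since $\H_2$ is reducing, hence invariant, for $T$, the operator $L$ is the restriction of a subnormal operator to an invariant subspace and is therefore itself subnormal; let $G$ be its semispectral measure. Corollary~\ref{C.1Z}(ii) then tells me that $\{T^n\}_{n=1}^{\infty}$ converges weakly if and only if $L$ is weakly stable, and that in this case the weak limit is the orthogonal projection of $\H$ onto $\N(I-T)$.

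Next I would feed $L$ into Proposition~\ref{wykstu}: $L$ is weakly stable precisely when $\|L\|\Leq 1$ and $\<G_\TT(\cdot)x,x\>$ is a Rajchman measure for every $x\in\H_2$. Inserting this into the equivalence of the previous paragraph yields exactly the asserted characterization, with $\H_1=\N(I-T)$ and $\H_2=\N(I-T)^\perp$, and the statement about the weak limit is already supplied by Corollary~\ref{C.1Z}(ii). For the converse direction I would note that under the stated hypotheses Proposition~\ref{wykstu} makes $L$ weakly stable, so $L^n\wconv 0$, and then \eqref{(1)} gives $T^n=I\oplus L^n\wconv I\oplus 0$, establishing weak convergence.

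For the uniqueness and the identity $\H_1=\N(I-T)$, I would use that any $L$ meeting the hypotheses is weakly stable, whence $\N(I-L)=\0$ (if $Lx=x$ then $x=L^nx\wconv 0$, so $x=0$). Since $(I-T)(x_1\oplus x_2)=0\oplus(I-L)x_2$, one gets $\N(I-T)=\H_1\oplus\N(I-L)=\H_1$, which forces $\H_1=\N(I-T)$ and hence $\H_2=\H_1^\perp=\N(I-T)^\perp$, pinning the decomposition down uniquely. The only step requiring genuine care will be the assertion that $L$ is subnormal---so that Proposition~\ref{wykstu} is applicable---together with the correct identification of its semispectral measure $G$; the remainder is a direct combination of the cited results.
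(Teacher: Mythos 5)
Your proof is correct and takes essentially the same route as the paper, which states this proposition as a direct consequence of Corollary~\ref{C.1Z} and Proposition~\ref{wykstu} (see also \eqref{(1)}) without writing out the details. The points you single out---that $L=T|_{\N(I-T)^\perp}$ is subnormal because it is the restriction of a subnormal operator to an invariant subspace, and the uniqueness of the decomposition via $\N(I-L)=\0$ so that $\H_1=\N(I-T)$---are precisely the details the paper leaves implicit, and you handle them correctly.
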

As for the strong stability of subnormal operators, we
have the following result.
   \begin{proposition}[\mbox{\cite[Proposition~4.2(ii)]{JJS23}}]
\label{sytsrb} Let $T\in \B(\H)$ be a subnormal
operator with the semispectral measure $F$. Then $T$
is strongly stable if and only if $\|T\|\Leq 1$ and
$F(\TT)=0$, or equivalently, if and only if $T$ is
power bounded and $F(\TT)=0$.
   \end{proposition}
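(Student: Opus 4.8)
The plan is to reduce everything to the scalar moment representation of $\|T^n x\|^2$ supplied by the semispectral measure. For each $x\in\H$ set $\mu_x=\<F(\cdot)x,x\>$, a finite positive Borel measure on $\CC$. Taking $m=n$ in the moment identity \eqref{(4)} gives $T^{*n}T^n=\int_\CC|z|^{2n}\,F(\D z)$, so by \eqref{(3.5)},
   \begin{align*}
\|T^n x\|^2=\<T^{*n}T^n x,x\>=\int_\CC|z|^{2n}\,\D\mu_x(z),\quad x\in\H,\ n\Ge 0.
   \end{align*}
Thus strong stability of $T$ is precisely the assertion that $\int_\CC|z|^{2n}\,\D\mu_x\to 0$ for every $x$, and the whole proof will consist in analysing this integral, in close parallel to the proof of Proposition~\ref{wykstu} (with the norm moments $\int|z|^{2n}\,\D\mu_x$ in place of the weak moments $\int z^n\,\D\mu_x$).

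First I would dispose of the ``equivalently'' clause. Since $T$ is subnormal, each power $T^n$ is the restriction of the normal operator $N^n$ (where $N$ is a minimal normal extension of $T$) to the $N$-invariant subspace $\H$; hence $T^n$ is itself subnormal and therefore normaloid by \eqref{shnyr}. Consequently $\|T^n\|=r(T^n)=r(T)^n=\|T\|^n$, so $T$ is power bounded if and only if $\|T\|\Leq 1$. This shows that the two formulations of the right-hand condition coincide.

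For the main equivalence I would argue both directions from the displayed formula together with the fact, recorded in the excerpt, that the closed support of $F$ equals $\sigma(N)$. For \emph{necessity}, strong stability implies weak stability, hence power boundedness by the uniform boundedness principle, and this forces $\|T\|\Leq 1$ (equivalently, invoke \eqref{gwpi} and \eqref{shnyr}). Then $\sigma(N)\sse\sigma(T)\sse\overline{\DD}$, so $\mu_x$ is concentrated on $\overline{\DD}=\DD\cup\TT$ and
   \begin{align*}
\|T^n x\|^2=\int_{\DD}|z|^{2n}\,\D\mu_x+\mu_x(\TT).
   \end{align*}
By dominated convergence the first summand tends to $0$, so $\|T^n x\|^2\to\mu_x(\TT)=\<F(\TT)x,x\>$; strong stability forces $\<F(\TT)x,x\>=0$ for all $x$, and positivity of $F(\TT)$ yields $F(\TT)=0$. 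For \emph{sufficiency}, assume $\|T\|\Leq 1$ and $F(\TT)=0$. Again the support lies in $\overline{\DD}$ and $\mu_x(\TT)=\<F(\TT)x,x\>=0$, so the same splitting gives $\|T^n x\|^2=\int_{\DD}|z|^{2n}\,\D\mu_x\to 0$ by dominated convergence, i.e.\ $T$ is strongly stable.

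The two appeals to dominated convergence are routine. The only points needing care are the necessity of $\|T\|\Leq 1$, where I lean on the normaloid property of subnormal operators so that weak convergence of the power sequence forces contractivity, and the final passage from $\<F(\TT)x,x\>=0$ for all $x$ to $F(\TT)=0$, which uses that $F(\TT)$ is a positive operator. I do not expect a serious obstacle, as the argument is structurally identical to that of Proposition~\ref{wykstu}.
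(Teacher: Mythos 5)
Your proof is correct, but there is nothing in the paper to compare it against line by line: the paper offers no proof of Proposition~\ref{sytsrb} at all, importing it verbatim from \cite[Proposition~4.2(ii)]{JJS23} by citation. What you have written is therefore a self-contained replacement for that external reference, and it is sound. Structurally it is the exact analogue of the paper's proof of Proposition~\ref{wykstu}: there the authors force $\|T\|\Leq 1$ via \eqref{gwpi} and \eqref{shnyr}, split $\<T^nx,x\>=\int_{\DD}z^n\,\D\mu_x+\int_{\TT}z^n\,\D\mu_x$ with $\mu_x=\<F(\cdot)x,x\>$, and kill the $\DD$-term by dominated convergence; you run the same scheme on the diagonal moments $\|T^nx\|^2=\<T^{*n}T^nx,x\>=\int_{\CC}|z|^{2n}\,\D\mu_x$ obtained from \eqref{(4)} with $m=n$ together with \eqref{(3.5)}, which is precisely the quantity governing strong rather than weak stability. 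The supporting steps all check out: powers of a subnormal operator are subnormal, hence normaloid by \eqref{shnyr}, so $\|T^n\|=r(T)^n=\|T\|^n$ and power boundedness is equivalent to $\|T\|\Leq 1$, which settles the ``equivalently'' clause; strong stability gives power boundedness by the uniform boundedness principle and hence contractivity; the support of $F$ lies in $\sigma(N)\sse\overline{\DD}$ (alternatively, use $\|N\|=\|T\|\Leq 1$ from \eqref{(6)}); and $\<F(\TT)x,x\>=0$ for all $x$ forces $F(\TT)=0$ by positivity of $F(\TT)$. A side benefit of your route is that it displays the weak/strong dichotomy for subnormal contractions transparently: weak stability asks that each $\mu_x|_{\TT}$ be a Rajchman measure, while strong stability asks for the stronger condition $\mu_x(\TT)=0$.
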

It turns out that weakly stable normal operators are
simply orthogonal sums of weakly stable unitary
operators and strongly stable normal operators. In
fact, as the proof of Corollary~\ref{nyrnil} shows,
any normal contraction $T$ has the form $T=U \oplus
S$, where $U$ is unitary and $S$ is a strongly stable
subnormal contraction. This kind of orthogonal
decompositions for a class of contractions, covering
the case of normal operators, can be found in
\cite{Kub96}.
   \begin{corollary} \label{nyrnil}
A normal operator $T\in \B(\H)$ is weakly stable if
and only if $T=U\oplus S$, where $U$ is a weakly
stable unitary operator and $S$ is a strongly stable
normal operator.
   \end{corollary}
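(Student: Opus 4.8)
The plan is to prove the two implications separately, placing the substantive work in the necessity direction, where the spectral theorem supplies the decomposition.

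For the sufficiency direction, I would start from $T = U \oplus S$ with $U$ a weakly stable unitary and $S$ a strongly stable normal operator. Since strong stability implies weak stability, we have both $U^n \wconv 0$ and $S^n \wconv 0$, and $T$ is normal as an orthogonal sum of normal operators; by \eqref{(1)} it follows that $T^n \wconv 0$, so $T$ is a weakly stable normal operator.

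For the necessity direction, suppose $T \in \B(\H)$ is normal and weakly stable. The first step is to observe that $T$ is a contraction: a normal operator is normaloid by \eqref{shnyr}, and since $\{T^n\}_{n=1}^\infty$ converges weakly (to $0$), \eqref{gwpi} forces $\|T\| \Leq 1$. Hence the spectral measure $E$ of $T$ is supported in $\dbbc = \DD \sqcup \TT$. I would then read the decomposition off this partition by setting $\H_1 = \R(E(\TT))$ and $\H_2 = \R(E(\DD))$, so that $\H = \H_1 \oplus \H_2$ reduces $T$, and putting $U = T|_{\H_1}$ and $S = T|_{\H_2}$, which gives $T = U \oplus S$.

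It remains to identify the summands and to locate the one delicate point. The restriction $U$ is normal with $\sigma(U) \subseteq \TT$, hence unitary, and since $T = U \oplus S$ is weakly stable, \eqref{(1)} shows that $U$ is weakly stable. The restriction $S$ is a normal contraction whose spectral measure $E(\cdot)|_{\H_2}$ annihilates $\TT$. The key observation is that, viewing $S$ as subnormal (a normal operator is its own minimal normal extension), its semispectral measure $F$ coincides with this spectral measure; thus $S$ meets the hypotheses $\|S\| \Leq 1$ and $F(\TT) = 0$ of Proposition~\ref{sytsrb}, which yields that $S$ is strongly stable. I expect this last identification---recognising that Proposition~\ref{sytsrb} applies with the spectral measure in the role of the semispectral measure---to be the only step requiring care; once contractivity is established, the split along $\TT$ versus $\DD$ and the recognition of $U$ as unitary are routine consequences of the spectral theorem.
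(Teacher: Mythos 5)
Your proposal is correct and follows essentially the same route as the paper: both split $\H$ along the spectral projections $E(\TT)$ and $E(\DD)$ of the (contractive, by \eqref{gwpi} and \eqref{shnyr}) normal operator $T$, identify $U=T|_{\R(E(\TT))}$ as a weakly stable unitary and $S=T|_{\R(E(\DD))}$ as a normal contraction, and invoke Proposition~\ref{sytsrb} (with the spectral measure playing the role of the semispectral measure) to get strong stability of $S$. The only cosmetic difference is that you obtain weak stability of $U$ directly from \eqref{(1)}, whereas the paper routes it through the Rajchman-measure characterization (Proposition~\ref{wykstu} and Corollary~\ref{undsw}); your shortcut is valid and slightly more economical.
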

   \begin{proof}
First, observe that the semispectral measure $E$ of
$T$ is the spectral measure of $T$. Suppose that $T$
is weakly stable. According to
Proposition~\ref{wykstu}, $\|T\| \Leq 1$~and
   \begin{align} \label{rydn}
\textit{$\<E_{\TT}(\cdot)x,x\>$ is a Rajchman measure
for every $x\in \H$.}
   \end{align}
By \cite[Theorem~6.6.3]{Bi-So87}, the spaces
$\M:=\R(E(\TT))$ and $\N:=\R(E(\DD))$ reduce $T$,
$\H=\M\oplus \N$ and $T=U\oplus S$, where $U=T|_{\M}$
and $S=T|_{\N}$. Clearly, $U$ is unitary (see
\cite[Theorem~6.1.2]{Bi-So87}) and $S$ is a normal
contraction. It is easy to see that $E_{\TT}$ is the
spectral measure of $U$ and the measure $E_{\DD}$
defined by $E_{\DD}(\varDelta)=E(\varDelta \cap \DD)$
for any Borel subset $\varDelta$ of $\cbb$ is the
spectral measure of $S$. Hence, by \eqref{rydn} and
Corollary~\ref{undsw}, $U$ is weakly stable. Since
$E_{\DD}(\TT)=0$, we deduce from
Proposition~\ref{sytsrb} that $S$ is strongly stable.
In view of \eqref{(1)}, the converse implication is
obvious.
   \end{proof}
Before we deal with the strong convergence of power
sequences of subnormal operators, we prove the
following.
   \begin{lemma} \label{pygut}
Let $F$ be a $\B(\H)$-valued Borel semispectral
measure on $\CC$. Then
   \begin{align} \label{gwqa}
\text{$\left\{\int_{\DD} z^n
F(dz)\right\}_{n=1}^{\infty}$ converges strongly to
$0$.}
   \end{align}
   \end{lemma}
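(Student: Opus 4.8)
The plan is to reduce \eqref{gwqa} to a single scalar application of dominated convergence. Write $A_n=\int_\DD z^n\,F(\D z)$, which is well defined because $z^n\chi_\DD$ is bounded and, for each $x\in\H$, the positive measure $\mu_x:=\<F(\cdot)x,x\>$ is finite with total mass $\mu_x(\CC)=\<F(\CC)x,x\>=\|x\|^2$. Everything will hinge on the estimate
   \begin{align} \label{eq:plankey}
\|A_n x\|^2\Leq \int_\DD |z|^{2n}\,\D\mu_x(z),\quad n\Geq 1,\ x\in\H.
   \end{align}
Granting \eqref{eq:plankey}, I would finish at once: on $\DD$ the integrand satisfies $|z|^{2n}\Leq 1$ and $|z|^{2n}\to 0$ pointwise as $n\to\infty$ (since $|z|<1$ throughout $\DD$), and $\mu_x$ is finite, so the Lebesgue dominated convergence theorem gives $\int_\DD|z|^{2n}\,\D\mu_x\to 0$; hence $A_n x\to 0$ for every $x$, which is exactly \eqref{gwqa}.

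To establish \eqref{eq:plankey} I would pass to a Naimark dilation of $F$: there exist a Hilbert space $\K\supseteq\H$ and a spectral measure $E\colon\borel\CC\to\B(\K)$ with $F(\varDelta)=P_\H E(\varDelta)|_\H$, where $P_\H\in\B(\K)$ is the orthogonal projection of $\K$ onto $\H$. Set $M=\int_\DD z\,E(\D z)$; since $E$ is multiplicative and $\chi_\DD^n=\chi_\DD$, one has $M^n=\int_\DD z^n\,E(\D z)$. For $x,y\in\H$ one computes, using $P_\H y=y$,
   \begin{align*}
\<A_n x,y\>=\int_\DD z^n\,\<F(\D z)x,y\>=\int_\DD z^n\,\<E(\D z)x,y\>=\<M^n x,y\>,
   \end{align*}
so $A_n x=P_\H M^n x$ and therefore $\|A_n x\|\Leq\|M^n x\|$. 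Finally, multiplicativity of $E$ gives $M^{*n}M^n=\int_\DD|z|^{2n}\,E(\D z)$, whence
   \begin{align*}
\|M^n x\|^2=\<M^{*n}M^n x,x\>=\int_\DD|z|^{2n}\,\D\mu_x(z),\quad x\in\H,
   \end{align*}
where I used $\<E(\cdot)x,x\>=\<P_\H E(\cdot)x,x\>=\<F(\cdot)x,x\>=\mu_x$ on $\borel\CC$ for $x\in\H$. Combining the last two displays yields \eqref{eq:plankey}.

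The only genuine obstacle is the estimate \eqref{eq:plankey}: for a merely \emph{positive} (hence non-multiplicative) semispectral measure, $\|A_n x\|$ cannot be read off directly from $\mu_x$, and the dilation is precisely what converts $F$ into a bona fide spectral measure where the $C^*$-identity applies. If one prefers to avoid Naimark's theorem and argue inside $\H$, the same estimate follows from the Cauchy--Schwarz inequality for positive operator-valued measures: for a simple function $\varphi$ one has $\big|\<(\int\varphi\,F(\D z))x,y\>\big|\Leq(\int|\varphi|^2\,\D\mu_x)^{1/2}\|y\|$, which comes from the positivity of each $F(\varDelta)$ together with two applications of Cauchy--Schwarz (one on each $F(\varDelta)^{1/2}$ and one on the partition sum). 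Taking $\varphi=z^n\chi_\DD$, passing to the supremum over $\|y\|\Leq 1$, and extending from simple functions by the usual approximation gives $\|A_n x\|\Leq(\int_\DD|z|^{2n}\,\D\mu_x)^{1/2}$, which is again \eqref{eq:plankey}.
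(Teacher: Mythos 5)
Your proof is correct and follows essentially the same route as the paper: a Naimark dilation of $F$ to a spectral measure $E$, the resulting estimate $\bigl\|\int_{\DD} z^n F(\D z)x\bigr\| \Leq \bigl(\int_{\DD} |z|^{2n}\,\D\mu_x\bigr)^{1/2}$ obtained from the multiplicativity of $E$, and then the Lebesgue dominated convergence theorem applied to the finite measure $\mu_x$. Your closing remark that the key estimate can instead be derived inside $\H$ via Cauchy--Schwarz for positive operator-valued measures on simple functions is a valid dilation-free alternative, but it does not change the substance of the argument.
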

   \begin{proof}
By Naimark's dilation theorem (see \cite{Nai43} and
\cite[Theorem~6.4]{Ml78})), there exist a Hilbert space $\K$
and a spectral measure $E\colon \A_\CC \to \B(\K)$ such that
$\H \subseteq \K$ and
   \begin{align*}
\<F(\varDelta)x,y\> = \<E(\varDelta)x,y\>, \quad x,y
\in \H,\, \varDelta\in \A_\CC,
   \end{align*}
where $\A_\CC$ stands for the $\sigma$-algebra of all
Borel subsets of $\CC$. This, together with
\eqref{(3.5)}, implies that
   \allowdisplaybreaks
   \begin{align*}
\Big|\Big\<\int_{\DD} z^n F(dz)x,y\Big\>\Big| &=
\Big|\int_{\DD} z^n \< E(dz)x,y\>\Big|
   \\
& = \Big|\Big\<\int_{\DD} z^n E(dz)x,y\Big\>\Big|
   \\
& \Leq \Big\|\int_{\DD} z^n E(dz)x\Big\| \|y\|
   \\
& = \Big(\int_{\DD} |z|^{2n} \< F(dz)x,x\>\Big)^{1/2}
\|y\|, \quad x, y\in \H, \, n \Ge 0.
   \end{align*}
As a consequence, we have
   \begin{align*}
\Big\|\int_{\DD} z^n F(dz)x\Big\| \Leq \Big(\int_{\DD}
|z|^{2n} \< F(dz)x,x\>\Big)^{1/2}, \quad x \in \H, \,
n \Ge 0.
   \end{align*}
By the Lebesgue dominated convergence theorem,
$\int_{\DD} |z|^{2n} \< F(dz)x,x\> \to 0$ as $n\to
\infty$, so \eqref{gwqa} holds.
   \end{proof}
Now we can characterize the strong convergence of
power sequences of subnormal operators.
   \begin{theorem} \label{swcti}
Let $T\in \B(\H)$ be a subnormal operator with the
semispectral measure $F$. Then the following conditions are
equivalent{\em :}
   \begin{enumerate}
   \item[(i)] $\{T^n\}_{n=1}^{\infty}$ is strongly
convergent,
   \item[(ii)] $T$ decomposes as $T=I\oplus L$ with
respect to a decomposition $\H=\H_1 \oplus \H_2$,
where $L$ is a subnormal contraction whose
semispectral measure vanishes on $\TT$,
   \item[(iii)] $T$ is a contraction and
   \begin{align} \label{fdyl}
\text{$F(\varDelta)=\delta_{1}(\varDelta) F(\TT)$ for every
Borel subset $\varDelta$ of $\TT$,}
   \end{align}
where $\delta_1$ is the Dirac measure at $1$.
   \end{enumerate}
Moreover, if {\em (i)} holds, then $F(\TT)$ is the
orthogonal projection of $\H$ onto \mbox{$\N(I-T)$}
and the strong limit of $\{T^n\}_{n=1}^{\infty}$ is
equal to $F(\TT)$. Furthermore, the orthogonal
decomposition of $T$ in {\em (ii)} is unique and
$\H_1=\N(I-T)$.
   \end{theorem}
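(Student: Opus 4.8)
The plan is to prove the cycle (i)$\Rightarrow$(ii)$\Rightarrow$(iii)$\Rightarrow$(i) and then read off the two ``moreover'' assertions from the intermediate constructions. Two preliminary observations will be used repeatedly. First, since $T$ is subnormal it is hyponormal by \eqref{shnyr}, hence normaloid. Second, the semispectral measure of an orthogonal direct sum of subnormal operators is the orthogonal direct sum of their semispectral measures: this follows immediately by comparing the moment representation \eqref{(4)} of the summands with that of the sum and invoking the uniqueness of the representing measure (so I will not need minimality of the normal extensions of the individual summands).

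To prove (i)$\Rightarrow$(ii) I would first note that strong convergence of $\{T^n\}_{n=1}^{\infty}$ entails weak convergence, so by \eqref{gwpi} (here $T$ is normaloid) $T$ is a contraction. Corollary~\ref{C.1Z} then supplies the orthogonal decomposition $T=I\oplus L$ with respect to $\H=\N(I-T)\oplus\N(I-T)^\perp$, in which $L$ is a contraction and, being the restriction of the subnormal operator $T$ to a reducing (hence invariant) subspace, is itself subnormal; moreover, by Corollary~\ref{C.1Z}(ii) the strong convergence of $\{T^n\}_{n=1}^{\infty}$ forces $L$ to be strongly stable. Proposition~\ref{sytsrb} applied to the subnormal contraction $L$ then gives $G(\TT)=0$ for its semispectral measure $G$, and positivity and monotonicity of $G$ yield $G(\varDelta)=0$ for every Borel $\varDelta\sse\TT$; taking $\H_1=\N(I-T)$ and $\H_2=\N(I-T)^\perp$ completes (ii). For (ii)$\Rightarrow$(iii) I would observe that $\|L\|\Leq 1$ forces $\|T\|\Leq 1$, and that the identity on $\H_1$ is unitary with spectral (hence semispectral) measure $\delta_1 I_{\H_1}$; by the direct-sum additivity noted above, $F(\varDelta)=\delta_1(\varDelta)I_{\H_1}\oplus G(\varDelta)$ for all Borel $\varDelta\sse\CC$. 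Restricting to $\varDelta\sse\TT$ and using $G(\varDelta)=0$ gives $F(\varDelta)=\delta_1(\varDelta)(I_{\H_1}\oplus 0)$, and the choice $\varDelta=\TT$ identifies $I_{\H_1}\oplus 0=F(\TT)$, whence \eqref{fdyl}.

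For (iii)$\Rightarrow$(i), since $T$ is a contraction one has $\supp F=\sigma(N)\sse\bar\DD=\DD\sqcup\TT$, so exactly as in \eqref{tnhh} I would split $T^n=\int_{\DD}z^n\,F(\D z)+\int_{\TT}z^n\,F(\D z)$. Lemma~\ref{pygut} forces the first summand to converge strongly to $0$. For the second, \eqref{fdyl} says that $\<F(\cdot)x,x\>$ restricted to $\TT$ is the point mass $\<F(\TT)x,x\>\,\delta_1$, so $\int_{\TT}z^n\,\<F(\D z)x,x\>=\<F(\TT)x,x\>$ for every $n\Ge 0$ and every $x\in\H$; by \eqref{(3.5)} and polarization this means $\int_{\TT}z^n\,F(\D z)=F(\TT)$. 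Hence $\{T^n\}_{n=1}^{\infty}$ converges strongly to $F(\TT)$, which proves (i) and at the same time identifies the strong limit as $F(\TT)$. The remaining ``moreover'' claims then fall out: Corollary~\ref{C.1Z} guarantees that this strong limit is the orthogonal projection of $\H$ onto $\N(I-T)$, so $F(\TT)$ is that projection; and uniqueness of the decomposition in (ii), together with $\H_1=\N(I-T)$, follows because the strongly stable summand $L$ satisfies $\N(I-L)=\0$, forcing $\N(I-T)=\H_1$ (equivalently, (ii) is just the canonical reducing decomposition of Corollary~\ref{C.1Z}).

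The main obstacle I anticipate is the measure-theoretic bookkeeping rather than any deep idea: one must justify that the semispectral measure of $T$ really does split as an orthogonal direct sum---which is why I route this through uniqueness of the representing measure of \eqref{(4)} instead of through minimal normal extensions of the summands---and one must verify that the contraction hypothesis places $\supp F$ inside $\bar\DD$, legitimising the $\DD/\TT$ decomposition of the integral in (iii)$\Rightarrow$(i). Everything else is a direct assembly of Corollary~\ref{C.1Z}, Proposition~\ref{sytsrb} and Lemma~\ref{pygut}.
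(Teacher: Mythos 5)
Your proof is correct and follows essentially the same route as the paper's: the same cycle (i)$\Rightarrow$(ii)$\Rightarrow$(iii)$\Rightarrow$(i), with (i)$\Rightarrow$(ii) obtained from Corollary~\ref{C.1Z} and Proposition~\ref{sytsrb}, (iii)$\Rightarrow$(i) from the moment formula \eqref{(4)} together with Lemma~\ref{pygut}, and the same identification of the strong limit with $F(\TT)$ and of $\H_1$ with $\N(I-T)$. The only local difference is in (ii)$\Rightarrow$(iii): where the paper cites \cite[Lemma~3.1]{CJJS2} for the direct-sum formula $F(\varDelta)=\delta_1(\varDelta)I\oplus G(\varDelta)$, you rederive it from the uniqueness of the representing semispectral measure in \eqref{(4)}, which is a sound, self-contained substitute for that citation.
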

   \begin{proof}
There is no loss of generality in assuming that $\|T\|\Leq 1$
(see the proof of Proposition~\ref{wykstu}).

(i)$\Rightarrow$(ii) It follows from
Corollary~\ref{C.1Z} that $T$ decomposes as $T=I\oplus
L$ with respect to the orthogonal decomposition $\H =
\N(I-T) \oplus \N(I-T)^\perp$, where $L$ is a
subnormal contraction. Hence, if
$\{T^n\}_{n=1}^{\infty}$ is strongly convergent, then
by Corollary~\ref{C.1Z}, $L$ is strongly stable. Now
we can apply Proposition~\ref{sytsrb} to get (ii).

(ii)$\Rightarrow$(iii) Denote by $G$ the semispectral measure
of $L$. Then (see \cite[Lemma~3.1]{CJJS2})
   \begin{align} \label{fdila}
F(\varDelta)= \delta_1(\varDelta) I \oplus
G(\varDelta), \quad \varDelta \text{ - Borel subset of
} \CC.
   \end{align}
Substituting $\varDelta=\TT$ and using $G(\TT)=0$, we
get $F(\TT)= I \oplus 0$. Since $G(\TT)=0$ implies
that $G(\varDelta)=0$ for every Borel subset
$\varDelta$ of $\TT$, we see that \eqref{fdila}
implies \eqref{fdyl}, so (iii) holds. According to
Proposition~\ref{sytsrb}, the subnormal contraction
$L$ is strongly stable, so $\{T^n\}_{n=1}^{\infty}$
converges strongly to $I \oplus 0$. In turn, by
Corollary~\ref{C.1Z}, $I \oplus 0=F(\TT)$ is the
orthogonal projection of $\H$ onto $\N({I-T})$. This
implies that $\H_1=\N({I-T})$, and consequently shows
that the orthogonal decomposition of $T$ in (ii) is
unique. In summary, we have proven both the
``moreover'' and ``furthermore'' parts.

(iii)$\Rightarrow$(i) It follows from the inclusion
$\sigma(T) \subseteq \DD \sqcup\TT$ that
   \allowdisplaybreaks
   \begin{align}  \notag
T^n & \overset{\eqref{(4)}} = \int_{\DD} z^n F(\D z) +
\int_{\TT} z^n F(\D z)
   \\ \label{piroT}
& \overset{\eqref{fdyl}}= \int_{\DD} z^n F(\D z) + F(\TT),
\quad n \Geq 0.
   \end{align}
Therefore, by Lemma~\ref{pygut}, $\{T^n\}_{n=1}^{\infty}$
converges strongly to $F(\TT)$.
   \end{proof}
   \begin{remark}
It is possible to prove the implication
(iii)$\Rightarrow$(ii) of Theorem~\ref{swcti} directly
without using Lemma~\ref{pygut}. We are still assuming
that $\|T\|\Leq 1$. Let $T=I\oplus L$ be the
orthogonal decomposition of $T$ as in
Corollary~\ref{C.1Z}. According to \eqref{piroT}, we
have
   \begin{align} \label{piesz}
T^n = \int_{\DD} z^n F(\D z) + F(\TT), \quad n \Geq 0.
   \end{align}
It follows from \eqref{(3.5)} and the Lebesgue
dominated convergence theorem that the sequence
$\big\{\int_{\DD} z^n F(\D z)\big\}_{n=1}^{\infty}$
converges weakly to $0$ as $n\to \infty$ (this is less
than what is postulated in \eqref{gwqa}). Hence, by
\eqref{piesz}, the power sequence
$\{T^n\}_{n=1}^{\infty}$ of $T$ converges weakly to
$F(\TT)$. In view of Corollary~\ref{C.1Z}, $F(\TT)$ is
the orthogonal projection of $\H$ onto $\N({I-T})$.
Let $G$ be the semispectral measure of $L$. Then
   \begin{align*}
0 \overset{\eqref{fdyl}} = F(\TT\setminus \{1\})
\overset{\eqref{fdila}}= 0\oplus G(\TT\setminus \{1\}),
   \end{align*}
so $G_{\TT}(\varDelta) = \delta_1(\varDelta) G(\{1\})$
for all Borel subsets $\varDelta$ of $\TT$. However,
by Proposition~\ref{traq}, $\<G_{\TT}(\cdot)x,x\>$ is
a Rajchman measure for every $x\in \H$, so
$\<G(\TT)x,x\>=0$ for all $x\in \H$ or, equivalently,
$G(\TT)=0$. This yields (ii).
   \hfill $\diamondsuit$
   \end{remark}
In the last part of the paper, we will focus on the
uniform stability and the associated norm convergence
of power sequences of subnormal operators. It follows
from \eqref{ustypiy} and \eqref{shnyr} that
   \begin{align} \label{(4.5)}
   \begin{minipage}{55ex}
   {\em \it a subnormal operator is uniformly stable if and
only if it is a strict contraction}.
   \end{minipage}
   \end{align}
First, we show that the uniform stability of subnormal
operators can be characterized in terms of strong
stability as follows.
   \begin{theorem}
Let $T\in \B(\H)$ be a subnormal operator with the
semispectral measure $F$. Then the following statements are
equivalent{\em :}
   \begin{enumerate}
   \item[(i)]
$T$ is uniformly stable,
   \item[(ii)]
$T$ is strongly stable and $\int_\DD
\frac{1}{1-|z|^2}\<F(\D z)x,x\> < \infty$ for every
$x\in \H$,
   \item[(iii)]
$r(T) \Leq 1$, $F(\TT)=0$ and $\int_\DD
\frac{1}{1-|z|^2}\<F(\D z)x,x\> <\infty$ for every
$x\in \H$.
   \end{enumerate}
Moreover, if $T$ is uniformly stable, ${N\in \B(\K)}$
is a minimal normal extension of\/ $T$ and\/ $P_{\H}$
in $\B(\K)$ is the orthogonal projection of\/ $\K$
onto\/ $\H$, then ${\|N\|<}1$ and
   \begin{align} \label{(5)}
\sum_{j=0}^{\infty} T^{*j}T^j = \int_\DD
\frac{1}{1-|z|^2} F(\D z) = P_{\H}(I-N^*N)^{-1}|_{\H},
   \end{align}
where the series is norm convergent.
   \end{theorem}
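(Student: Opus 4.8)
The plan is to prove the cycle (i)$\Rightarrow$(ii)$\Rightarrow$(iii)$\Rightarrow$(i), reserving the bulk of the work for the last implication, and then to derive the ``moreover'' part separately from the strict contractivity. The two easy implications rely only on facts already at hand: since a subnormal operator is normaloid by \eqref{shnyr}, uniform stability is the same as $\|T\|<1$ by \eqref{ustypiy} (equivalently \eqref{(4.5)}); such a $T$ is trivially strongly stable and, because $\supp F=\sigma(N)$ sits inside $\{|z|\Leq\|T\|\}$ with $\|T\|<1$, the integrand $\frac{1}{1-|z|^2}$ is bounded on $\supp F$, giving (i)$\Rightarrow$(ii). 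For (ii)$\Rightarrow$(iii), Proposition~\ref{sytsrb} turns strong stability into ``$\|T\|\Leq1$ and $F(\TT)=0$'', and normaloidness gives $r(T)=\|T\|\Leq1$.

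The substantive implication is (iii)$\Rightarrow$(i). Here I would first record that $\|T\|=r(T)\Leq1$ and $F(\TT)=0$, so $F$ is concentrated on $\DD$ and by Proposition~\ref{sytsrb} $T$ is already strongly stable; the genuinely new information is the integrability hypothesis. Using \eqref{(4)} to write $T^{*j}T^j=\int_\DD|z|^{2j}F(\D z)$ and summing the finite partial sums, monotone convergence yields, for every $x$,
\[
\sum_{j=0}^{\infty}\|T^jx\|^2=\int_\DD\frac{1}{1-|z|^2}\<F(\D z)x,x\><\infty .
\]
Hence the map $V\colon\H\to\ell^2(\H)$, $Vx=(T^jx)_{j\Ge0}$, is everywhere defined; its graph is closed because coordinatewise convergence in $\ell^2(\H)$ together with continuity of each $T^j$ forces the limit to be $(T^jx)_j$, so $V$ is bounded and $B:=V^*V=\sum_{j=0}^\infty T^{*j}T^j$ is a bounded positive operator with $\<Bx,x\>=\sum_j\|T^jx\|^2$.

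The key mechanism is then a Lyapunov/renorming argument. Shifting the summation index gives $B=I+T^*BT$, whence $B\Ge I$; equipping $\H$ with the equivalent inner product $\<Bx,y\>$ and writing $\|\cdot\|_B$ for the associated norm, one computes $\|Tx\|_B^2=\<T^*BTx,x\>=\|x\|_B^2-\|x\|^2\Leq(1-\|B\|^{-1})\|x\|_B^2$, so $T$ is a strict contraction for $\|\cdot\|_B$. Since the spectral radius is independent of the choice of an equivalent norm, $r(T)\Leq(1-\|B\|^{-1})^{1/2}<1$, and \eqref{ustypiy} delivers uniform stability. I expect this step --- extracting a strict spectral-radius bound from mere square-summability of $\{T^jx\}$ --- to be the main obstacle, with the closed-graph boundedness of $B$ and the norm-independence of $r(T)$ the two points needing care.

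For the ``moreover'' part, assume $T$ uniformly stable. From normaloidness and the spectral inclusion $\sigma(N)\subseteq\sigma(T)$ for minimal normal extensions (see \cite{Co91}), so that $\|N\|=r(N)\Leq r(T)=\|T\|<1$, one gets $\|N\|<1$; thus $\|T^{*j}T^j\|\Leq\|T\|^{2j}$ forces norm convergence of $\sum_j T^{*j}T^j$. The first equality follows by integrating the geometric series term by term, which is legitimate because $\sum_{j=0}^n|z|^{2j}\to\frac{1}{1-|z|^2}$ uniformly on $\supp F\subseteq\{|z|\Leq\|T\|\}$. For the second, $\frac{1}{1-|z|^2}$ is bounded on $\sigma(N)$, so $F=P_\H E|_\H$ (see \eqref{(3.8)}) passes through the bounded Borel functional calculus to give $\int_\DD\frac{1}{1-|z|^2}F(\D z)=P_\H\big(\int_\DD\frac{1}{1-|z|^2}E(\D z)\big)|_\H$, and the functional calculus for the normal operator $N$ identifies $\int_\DD\frac{1}{1-|z|^2}E(\D z)=(I-N^*N)^{-1}$, since $I-N^*N=\int(1-|z|^2)E(\D z)$ is bounded below by $1-\|N\|^2>0$.
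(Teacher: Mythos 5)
Your proposal is correct, and on the decisive implication (iii)$\Rightarrow$(i) it takes a genuinely different route from the paper's. The paper stays entirely within the semispectral framework: it invokes the existence theorem for semispectral integrals (\cite[Theorem~A.1]{St92}) to obtain the bounded operator $R:=\int_\DD\frac{1}{1-|z|^2}F(\D z)$, and then applies the Cauchy--Schwarz inequality to $\|x\|^2=\int_\DD 1\,\<F(\D z)x,x\>$ to get $\|x\|^2\Leq\|(I-T^*T)^{1/2}x\|\,\|R\|^{1/2}\|x\|$; hence $(I-T^*T)^{1/2}$ is bounded below, so $1\notin\sigma(T^*T)$, and $\|T\|<1$ follows from $T^*T\Leq I$. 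You instead convert the integrability hypothesis, via \eqref{(4)} and Tonelli, into square-summability of orbits $\sum_{j=0}^{\infty}\|T^jx\|^2<\infty$, make $B=\sum_{j=0}^{\infty}T^{*j}T^j$ bounded by a closed-graph argument, and run the discrete Lyapunov (Datko-type) renorming: $B=I+T^*BT$ and $B\Ge I$ give $\|Tx\|_B^2=\|x\|_B^2-\|x\|^2\Leq(1-\|B\|^{-1})\|x\|_B^2$, whence $r(T)\Leq\|T\|_B<1$. All the steps you flagged check out: the closed-graph argument for $V$, the identification of $V^*V$ with the strong limit of the partial sums, and the norm-independence of the spectral radius are each sound; note also that your $B$ coincides with the paper's $R$ --- that is precisely the first equality in \eqref{(5)}. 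What your route buys is self-containedness (the closed graph theorem replaces the cited existence theorem for semispectral integrals) and the isolation of a general principle valid for arbitrary $T\in\BH$, namely that square-summable orbits force $r(T)<1$, with subnormality entering only through \eqref{(4)}; what the paper's route buys is brevity given the cited theorem, and the fact that the operator $R$ it constructs is exactly the middle term of \eqref{(5)}, so the ``moreover'' part is essentially already in hand. Your remaining steps --- (i)$\Rightarrow$(ii), (ii)$\Rightarrow$(iii) via Proposition~\ref{sytsrb} and \eqref{shnyr}, and the ``moreover'' part ($\|N\|<1$, term-by-term integration over $\supp F\subseteq\{z\in\CC\colon|z|\Leq\|T\|\}$, and compression of the functional calculus of $N$) --- agree in substance with the paper's; the only organizational difference is that the paper proves (i)$\Rightarrow$(iii), (iii)$\Rightarrow$(i) and (ii)$\Leftrightarrow$(iii) rather than your cycle.
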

   \begin{proof}
Let $E$ be the spectral measure of $N$. Using
\eqref{shnyr} and \cite[Corollary~II.2.17]{Co91}, we
deduce that
   \begin{align} \label{(6)}
   \begin{minipage}{58ex}
{\em $\theta:=\|N\|=\|T\| = r(T)$ and both measures $E$ and
$F$ are supported in $\{z\in \CC\colon |z|\Leq \theta\}$}.
   \end{minipage}
   \end{align}

(i)$\Rightarrow$(iii) This is a direct consequence of
\eqref{(4.5)}.

(iii)$\Rightarrow$(i) By \eqref{(6)}, $T^*T \Leq I.$
Since $\int_\DD\frac{1}{1-|z|^2}\<F(\D z)x,x\><\infty$
for every ${x\in \H}$, there exists the semispectral
integral $R:=\int_\DD \frac{1}{1-|z|^2} F(\D z)\in
\B(\H)$ (see \cite[Theorem~A.1]{St92}). It follows
from (iii) that
   \begin{align} \label{(7)}
F(\CC\setminus\DD)=0.
   \end{align}
Hence, we have
   \begin{align*}
\<Rx, x\> \overset{\eqref{(3.5)}}
=\int_\DD\frac{1}{1-|z|^2}\<F(\D z)x,x\>\Ge\|x\|^2,
\quad x\in \H,
   \end{align*}
and thus $R \neq 0$. Using the Cauchy-Schwarz
inequality, \eqref{(3.5)} and \eqref{(4)}, we obtain
   \allowdisplaybreaks
   \begin{align*}
\|x\|^2 & \overset{\eqref{(7)}} = \int_\DD 1 \, \<F(\D
z)x,x\>
   \\
& \hspace{1.4ex}\Leq \bigg(\int_\DD (1-|z|^2) \<F(\D
z)x,x\>\bigg)^{1/2} \; \bigg(\int_\DD
\frac{1}{1-|z|^2}\<F(\D z)x,x\>\bigg)^{1/2}
   \\
& \hspace{1.4ex} = \<(I-T^*T)x,x\>^{1/2}\<Rx,x\>^{1/2}
   \\
& \hspace{1.4ex} \Leq \|(I-T^*T)^{1/2}x\| \|R\|^{1/2}
\, \|x\|, \quad x\in \H.
    \end{align*}
Therefore, we have
         \begin{align*}
\|(I-T^*T)^{1/2}x\| \Ge \frac{1}{\|R\|^{1/2}} \|x\|,
\quad x\in \H.
         \end{align*}
This implies that ${(I-T^*T)^{1/2}}$ is invertible.
Thus, ${0\notin\sigma(I-T^*T)}$, or equivalently
${1\notin \sigma(T^*T)}$. Since ${T^*T\Leq I}$, we
conclude that ${\|T\|<1}$. Hence, in view of
\eqref{(4.5)}, statement (i) holds.

(ii)$\Leftrightarrow$(iii) This equivalence is a
direct consequence of Proposition~\ref{sytsrb} and
\eqref{shnyr}.

It remains to prove the ``moreover'' part. Assume that
$T$ is uniformly stable. By \eqref{(4.5)},
${\|T\|<1}$, so the series ${\sum_{j=0}^{\infty}
T^{*j}T^j}$ is norm convergent. Using \eqref{(4)}, we
get
   \allowdisplaybreaks
   \begin{align*}
\bigg\<\bigg(\sum_{j=0}^\infty T^{*j}T^j \bigg)x,x
\bigg\> & = \sum_{j=0}^{\infty} \int_\DD
|z|^{2j}\<F(\D z)x,x\>
   \\
& = \int_\DD \bigg(\sum_{j=0}^{\infty} |z|^{2j}\bigg)
\<F(\D z)x,x\>
   \\
&\hspace{-1ex} \overset{\eqref{(3.5)}}= \bigg\<
\int_\DD \frac{1} {1-|z|^2} F(\D z)x,x\bigg\>, \quad
x\in \H.
      \end{align*}
This proves the first equality in \eqref{(5)}. By
\eqref{(6)}, the operator ${I-N^*N}$ is invertible.
Hence, by \eqref{(3.5)}, \eqref{(3.8)}, \eqref{(6)}
and the Stone-von Neumann calculus, we~have
   \allowdisplaybreaks
   \begin{align*}
\Big<\int_\DD \frac{1}{1-|z|^2} F(\D z)x,x\Big\> & =
\Big\<\int_\DD\frac{1}{1-|z|^2} E(\D z)x,x\Big\>
   \\
& = \<(I-N^*N)^{-1}x,x\>
   \\
& =\<P_{\H}(I-N^*N)^{-1}|_{\H}x,x\>, \quad x\in \H.
         \end{align*}
This proves the second equality in \eqref{(5)} and
completes the proof.
   \end{proof}
Now, we can describe the norm convergence of power
sequences of subnormal operators. The result below is
a direct consequence of \eqref{(4.5)} and
Corollary~\ref{C.1Z}.
   \begin{proposition}
Let $T\in \B(\H)$ be a subnormal operator. Then
$\{T^n\}_{n=1}^{\infty}$ is norm convergent if and
only if $T$ decomposes as $T=I\oplus L$, where $L$ is
a strict contraction.
   \end{proposition}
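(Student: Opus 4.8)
The plan is to reduce the statement entirely to Corollary~\ref{C.1Z} and the characterization \eqref{(4.5)} of uniform stability for subnormal operators, so that the proof becomes essentially bookkeeping. First I would dispose of the easy direction. Suppose $T$ decomposes as $T = I \oplus L$ with $L$ a strict contraction. Then $\{L^n\}_{n=1}^{\infty}$ converges to $0$ in norm by \eqref{(4.5)}, since $\|L\|<1$ yields $\|L^n\|\Leq \|L\|^n \to 0$; equivalently $L$ is uniformly stable. Because norm convergence respects orthogonal sums (the operator-norm analogue of \eqref{(1)}, with $(I\oplus L)^n = I \oplus L^n$), it follows that $\{T^n\}_{n=1}^{\infty}$ converges in norm to $I\oplus 0$. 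This settles the ``if'' implication.

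For the converse, I would invoke Corollary~\ref{C.1Z}. Since $T$ is subnormal, it is in particular hyponormal, so $\N(I-T)$ reduces $T$ and $T$ decomposes as $T = I \oplus L$ relative to $\H = \N(I-T)\oplus \N(I-T)^\perp$, where $L$ is again a subnormal (hence hyponormal, hence normaloid) operator on $\N(I-T)^\perp$. Assuming $\{T^n\}_{n=1}^{\infty}$ is norm convergent, part~(ii) of Corollary~\ref{C.1Z} tells us precisely that $L$ is uniformly stable. Here I would note that $L$ inherits subnormality from $T$ as a restriction to a reducing subspace; this is the only structural fact about the summand I need, and it is standard.

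The final step is to upgrade ``uniformly stable'' to ``strict contraction'' for the summand $L$. This is exactly where \eqref{(4.5)} does the work: a subnormal operator is uniformly stable if and only if it is a strict contraction. Applying this to the subnormal operator $L$ gives $\|L\|<1$, which is the desired conclusion. The one point requiring a moment's care — and the closest thing to an obstacle — is confirming that $L$ is genuinely subnormal rather than merely hyponormal, so that \eqref{(4.5)} is applicable to it and not only to $T$; but since $\N(I-T)^\perp$ reduces $T$, the restriction of a normal extension of $T$ shows $L$ is subnormal, so the argument closes cleanly. The ``moreover'' identification of the limit as the orthogonal projection onto $\N(I-T)$ is then immediate from Corollary~\ref{C.1Z}, though the statement as phrased does not explicitly demand it.
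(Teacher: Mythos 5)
Your proposal is correct and takes essentially the same approach as the paper, which proves this proposition simply by citing \eqref{(4.5)} and Corollary~\ref{C.1Z}; your argument is just that reduction written out in detail. The one point you elaborate beyond the paper --- verifying that $L$ is genuinely subnormal so that \eqref{(4.5)} applies --- is sound, and could even be bypassed: $L$ is hyponormal, hence normaloid, so \eqref{ustypiy} already upgrades uniform stability to $\|L\|<1$.
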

We conclude the paper by relating the stability of a
subnormal operator to the stability of its minimal
normal extension.
   \begin{theorem}
Let $T\in \B(\H)$ be a subnormal operator and
$N\in\B(\K)$ be a minimal normal extension of $T$.
Then the following assertions hold{\em :}
   \begin{enumerate}
   \item[(i)] $T$ is weakly $($strongly, uniformly$)$ stable if and
only if $N$ is weakly $($strongly, uniformly$)$
stable,
   \item[(ii)] $\{T^n\}_{n=1}^{\infty}$ converges weakly $($strongly,
in norm$)$ if and only if $\{N^n\}_{n=1}^{\infty}$
converges weakly $($strongly, in norm$)$,
   \item[(iii)] if
$\{T^n\}_{n=1}^{\infty}$ converges weakly $($strongly,
in norm$)$, then $\N(I-T)=\N(I-N)$, $T=I\oplus L$
relative to $\H=\N(I-T) \oplus \N(I-T)^{\perp}$ and
$N=I\oplus M$ relative to $\K=\N(I-N) \oplus
\N(I-N)^{\perp}$, where $L$ and $M$ are weakly
$($strongly, uniformly$)$ stable and $M$ is a minimal
normal extension of $L$.
   \end{enumerate}
   \end{theorem}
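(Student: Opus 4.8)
The plan is to reduce everything to the stability-transfer statement (i), which I would split according to topology. For the \emph{uniform} case I would use \eqref{(6)}, giving $r(T)=\|T\|=\|N\|=r(N)$; since $T$ and $N$ are normaloid (by \eqref{shnyr}, $N$ being normal), \eqref{ustypiy} shows each is uniformly stable exactly when its spectral radius is $<1$, and these coincide. For the \emph{weak} case I would invoke Proposition~\ref{wykstu} for $T$ and, since a normal operator is subnormal with spectral measure equal to its semispectral measure, also for $N$. As $F(\Delta)=P_\H E(\Delta)|_\H$ by \eqref{(3.8)}, one has $\<F_\TT(\cdot)x,x\>=\<E_\TT(\cdot)x,x\>$ for $x\in\H$, so both criteria reduce to a Rajchman condition, for $x\in\H$ in the case of $T$ and for $x\in\K$ in the case of $N$. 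The nontrivial inclusion is that the Rajchman property for all $x\in\H$ forces it for all $x\in\K$; I would prove this by setting $V_n=\int_\TT z^n\,E(\D z)$, noting $\|V_n\|\Leq 1$ and $V_{-k}E(\TT)x=E(\TT)N^{*k}x$, and using the minimality identity $\K=\overline{\mathrm{span}}\{N^{*k}h:h\in\H,\,k\Ge0\}$ to see that the vectors $E(\TT)N^{*k}h$ span $\R(E(\TT))$; a polarization-plus-density argument on the unitary part $V_1|_{\R(E(\TT))}$ then propagates $\<V_nx,x\>\to0$ from $\H$ to all of $\K$. For the \emph{strong} case I would use Proposition~\ref{sytsrb}, reducing to $F(\TT)=0\Leftrightarrow E(\TT)=0$: one direction is immediate from \eqref{(3.8)}, and for the other, $F(\TT)=0$ gives $\|E(\TT)h\|^2=\<E(\TT)h,h\>=0$ for $h\in\H$, so $\R(E(\CC\setminus\TT))$ is a reducing subspace of $N$ containing $\H$, whence it equals $\K$ by minimality, i.e.\ $E(\TT)=0$.

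Assertion (ii) splits into an easy and a hard direction. For the easy direction I would use that $\H$ is $N$-invariant with $N|_\H=T$, so $N^nh=T^nh\in\H$ for $h\in\H$ and hence $T^n=P_\H N^n|_\H$; consequently weak, strong, or norm convergence of $\{N^n\}_{n=1}^\infty$ passes directly to $\{T^n\}_{n=1}^\infty$.

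The hard direction of (ii) and all of (iii) I would treat together, assuming $\{T^n\}_{n=1}^\infty$ converges weakly (covering the strong and norm cases). By \eqref{gwpi} and \eqref{shnyr} we get $\|T\|\Leq1$, hence $\|N\|\Leq1$; by Corollary~\ref{C.1Z}, $\N(I-T)=\N(I-T^*)$ and $T=I\oplus L$ with $L$ weakly (resp.\ strongly, uniformly) stable. The first key step is $\N(I-N)=\N(I-T)$: the inclusion $\N(I-T)\subseteq\N(I-N)$ is clear, and for the reverse I would write $N$ in block form on $\H\oplus\H^\perp$, so that for $x\in\N(I-N)$ (whence $N^*x=x$ by normality) the top component of $N^*x=x$ yields $T^*P_\H x=P_\H x$, i.e.\ $g:=P_\H x\in\N(I-T^*)=\N(I-T)$; then $Ng=g=N^*g$, so $\<x-g,N^{*k}h\>=\<x,h\>-\<g,h\>=0$ for all $h\in\H$ and $k\Ge0$, and minimality forces $x=g\in\N(I-T)$. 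The second key step is to decompose $N=I\oplus M$ relative to $\K=\N(I-N)\oplus\N(I-N)^\perp$ and verify that $M$ is the minimal normal extension of $L$: writing $\H_0=\N(I-T)=\N(I-N)$ and $\H_1=\H\ominus\H_0$, one has $N^{*k}\H_0=\H_0$ (since $N^*|_{\H_0}=I$), so the minimality identity for $N$ collapses to $\N(I-N)^\perp=\overline{\mathrm{span}}\{M^{*k}\H_1\}$, which is exactly minimality for $M$ over $L$.

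With these two steps in hand, part (i) applied to the pair $(L,M)$ transfers the stability of $L$ to $M$, and then $N=I\oplus M$ with $M$ weakly (strongly, uniformly) stable yields, via Theorem~\ref{T.1}(ii) and \eqref{(1)}, the weak (strong, norm) convergence of $\{N^n\}_{n=1}^\infty$; this simultaneously completes the hard direction of (ii) and establishes (iii). I expect the main obstacle to be the two minimality-driven transfers: the propagation of the Rajchman condition from $\H$ to $\K$ in the weak case of (i), which requires the density argument on the unitary part of $N$, and the verification that $M$ is \emph{minimal} over $L$ rather than merely a normal extension. Both rest on the spanning property $\K=\overline{\mathrm{span}}\{N^{*k}\H\}$, and the delicate bookkeeping lies in tracking how the identity part $\H_0$ is absorbed.
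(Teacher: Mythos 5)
Your proposal is correct, and its skeleton coincides with the paper's: use Corollary~\ref{C.1Z} to write $T=I\oplus L$, lift this decomposition to $N=I\oplus M$ with $M$ a minimal normal extension of $L$, transfer stability via part (i) applied to the pair $(L,M)$, and reassemble with Theorem~\ref{T.1}(ii) and \eqref{(1)}. The differences lie in how the two key transfers are proved. For (i), the paper argues directly: by minimality $\K=\bigvee_{j\Ge 0}N^{*j}\H$, the identity $\<N^n(N^{*j}x),N^{*k}y\>=\<T^n(T^kx),T^jy\>$ together with density and power boundedness (\cite[Lemma~1]{Kub89}) transfers weak stability, the strong case is handled the same way, and the uniform case follows from $\|T^n\|=\|N^n\|$. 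You instead route the weak and strong cases through the semispectral-measure criteria (Propositions~\ref{wykstu} and~\ref{sytsrb}), which obliges you to propagate the Rajchman condition from $\H$ to $\K$ through the unitary part of $N$ and to prove $F(\TT)=0\Leftrightarrow E(\TT)=0$; both arguments are sound (your computation $\<V_n E(\TT)N^{*j}h, E(\TT)N^{*k}g\>=\<V_{n+k-j}h,g\>$ plus polarization and density does the work, as does the minimality argument showing $\H\subseteq\N(E(\TT))$ forces $E(\TT)=0$), but they re-derive in measure-theoretic clothing what the paper's single density argument gives directly. The more substantive divergence is the lifting step in (ii)/(iii): the paper cites \cite[Lemma~3.4]{CJJS23} (an unpublished reference) for $N=I\oplus M$ with $M$ minimal over $L$, whereas you prove it outright — first $\N(I-N)=\N(I-T)$ via the block-triangular form of $N$ on $\H\oplus\H^{\perp}$, normality, the equality $\N(I-T^*)=\N(I-T)$ from Corollary~\ref{C.1Z}, and minimality; then the collapse of $\K=\bigvee_{k}N^{*k}\H$ to $\N(I-N)^{\perp}=\bigvee_k M^{*k}\bigl(\H\ominus\N(I-T)\bigr)$. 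Note the reversed order: the paper obtains $\N(I-N)=\N(I-T)$ last, by applying Corollary~\ref{C.1Z} to $N$ after establishing convergence of $\{N^n\}_{n=1}^{\infty}$, while you establish the kernel equality first and use it to build the decomposition. Your route is longer in (i), but it makes the whole theorem self-contained, replacing both of the paper's external citations with complete arguments.
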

   \begin{proof}
(i) First, we will address the issue of weak
stability. Suppose that $T$ is weakly stable. Then
$\|N\|=\|T\| \Leq 1$ (see \eqref{(6)} and
Proposition~\ref{wykstu}). It follows from
\cite[Proposition~II.2.4]{Co91} that
   \begin{align} \label{mnte}
\K=\bigvee_{j=0}^{\infty} N^{*j}\H.
   \end{align}
Then for all $j,k\Ge 0$ and $n \Geq 1$,
   \begin{align*}
\<N^n (N^{*j}x), N^{*k}y\> = \<N^{n+k}x, N^j y\> =
\<T^n (T^k x), T^jy\>, \quad x,y \in \H.
   \end{align*}
By the weak stability of $T$, this implies that $\<N^n
f, g\> \to 0$ as $n\to \infty$ for all $f, g\in \X$,
where $\X$ is the linear span of
$\bigcup_{j=0}^{\infty} N^{*j}\H$. Since by
\eqref{mnte}, $\X$ is dense in $\K$, and $\sup_{n\Geq
1} \|N^n\| < \infty$, we conclude that $N$ is weakly
stable (see e.g., \cite[Lemma~1]{Kub89}). The converse
implication is obvious.

The case of strong stability can be considered
similarly (see also
\cite[Proposition~4.2(iii)]{JJS23}). In turn, the case
of uniform stability follows from the fact that
$\|T^n\|=\|N^n\|$ for all $n\Ge 1$ (see
\cite[Corollary~II.2.17]{Co91}).

(ii)\&(iii) Assume that $\{T^n\}_{n=1}^{\infty}$
converges weakly to $P\in \B(\H)$. Then, by
Corollary~\ref{C.1Z}, $P$ is an orthogonal projection
with $\R(P)=\N(I-T)$ and $T$ decomposes as $T=I \oplus
L$ with respect to the orthogonal decomposition $\H =
\R(P) \oplus \R(I-P)$, where $L$ is a weakly stable
subnormal operator. It follows from
\cite[Lemma~3.4]{CJJS23} that $N=I \oplus M$ relative
to $\K=\R(P) \oplus \R(P)^{\perp}$, where $M$ is a
minimal normal extension of $L$. By (i), $M$ is weakly
stable, so $\{N^n\}_{n=1}^{\infty}$ convergent weakly
to $Q\in \B(\K)$, where $Q$ is the orthogonal
projection of $\K$ onto $\R(P)$ (clearly, $Q$ extends
$P$). Applying Corollary~\ref{C.1Z} to $N$ in place of
$T$, we conclude that
   \begin{align*}
\N(I-N)=\R(Q)=\R(P)=\N(I-T).
   \end{align*}
The ``if'' part of (ii) for the weak topology is again
obvious. This proves both (ii) and (iii) in the case
of weak topology. A similar argument can be used in
the case of strong and norm topologies.
   \end{proof}
\bibliographystyle{amsplain}

\end{document}